\newtheorem{proposition}{Proposition}[section]
\newtheorem{corollary}[proposition]{Corollary}
\newtheorem{lemma}[proposition]{Lemma}
\newtheorem{theorem}[proposition]{Theorem}
\newtheorem{remark}{Remark}[section]
\numberwithin{equation}{section}
\newcommand{\punto}{\,\,\cdot\,\,}
\newcommand{\ds}{\displaystyle}
\newcommand{\smallfrac}[2]{{\textstyle\frac{#1}{#2}}}
\newcommand{\jump}[1]{[\![#1]\!]}
\newcommand{\triple}[1]{|\!|\!|#1|\!|\!|}
\title{Fully discrete Kirchhoff formulas with CQ--BEM}
\author{Lehel Banjai
\footnote{School of Mathematical \& Computer Sciences, Heriot-Watt University, Edinburgh, EH14 4AS, United Kingdom -- {\tt l.banjai@hw.ac.uk}}
Antonio R. Laliena
\footnote{Dep.~Matem\'aticas, EUPLA, Universidad de Zaragoza, 50100 La Almunia, Spain --
{\tt  arlalibi@unizar.es}. Partially supported by MICINN Project MTM2010-16917 and Gobierno de Arag\'on (Grupo consolidado PDIE).}
 \& Francisco--Javier Sayas\footnote{Department of Mathematical Sciences, University of Delaware, Newark DE, 19716 USA -- {\tt fjsayas@udel.edu}. Partially supported by the National Science Foundation (grant DMS-1216356).}
}
\date{\today}
\begin{document}

\maketitle

\begin{abstract}
In this paper we propose and analyze a fully discrete method for a direct boundary integral formulation of the scattering of a transient acoustic wave by a sound-soft obstable. The method uses Galerkin-BEM in the space variables and three different choices of time-stepping strategies based on Convolution Quadrature. The numerical analysis of the method is carried out directly in the time domain, not reverting to Laplace transform techniques.

\noindent
{\bf AMS Subject Classification.} 65M38, 65R20, 53L05

\noindent
{\bf Key words.} Retarded boundary integral equations, Convolution Quadrature, Galerkin BEM

\end{abstract}

\section{Introduction}

In this paper we propose and analyze a fully discrete method for the direct boundary integral formulation of the Dirichlet problem for the causal acoustic wave equation, exterior to a domain with Lipschitz boundary in $\mathbb R^d$ ($d=2$ or $3$). The method arises from using a general Galerkin semidiscretization-in-space and multistep-based Convolution Quadrature (CQ) in time. From the point of view of the numerical method, this paper extends work in \cite{Lub94} and \cite{Ban10}. A survey of recent results for CQ-BEM discretization of a wide variety transient problems can be found in \cite{BanSchTA}.

Analytical literature on time-domain integral equations has typically been focused on integral equations of the first kind arising from indirect formulations. The origin of these techniques was based on Galerkin-in-time methods \cite{BamHaD86a}, while CQ techniques were developed only about one decade later. The present paper uses a direct formulation, leading to an integral equation of the first kind similar to those treated in \cite{BamHaD86a, Lub94, Ban10}. The main differences lie in the fact that data appear under the action of a retarded integral operator (which will have to be discretized as well) and that the unknown on the boundary is a quantity of physical interest. 

In this paper we propose the development of a systematic analysis of CQ-BEM taking care of all aspects of discretization: (a) data interpolation on the computational grid, (b) Galerkin semidiscretization-in-space of the associated retarded integral equation, (c) discretization in time (using CQ) of the integral operators in both sides of the equation, (d) discretization in time of the postprocessed potentials (acting on the data and unknown of the integral equation) to obtain the scattered wave field in exterior points. The main difference with the traditional black-box analysis proposed by Lubich \cite{Lub94} is in the fact that we propose to do most of the analysis directly in the time domain. Original work in the analysis of CQ-BEM dealt only with the simplest retarded boundary integral equations, that are coercive in the Laplace domain. Coercivity is inherited by the Galerkin semidiscrete-in-space problem, but some properties of the fully discrete problem (including postprocessing of the solution to obtain the associated potentials and treatment of data that appear under the action of retarded integral operators) have to be investigated in a more direct fashion \cite{LalSay09}. More recently, some estimates in the time-domain \cite{DomSaySB, SaySB} have expanded the analytical toolbox that can be used to prove error estimates for full discretization of retarded boundary integral equations. It has to be noted that most of the literature that is relevant for this analysis had been carried out using Laplace transforms --the paper \cite{Ryn99} seems to be a lone exception--. The passage through the Laplace domain makes for a relatively streamlined analysis that can be expanded to a wide variety of problems \cite{LalSay09, BanSauSB} but is likely to yield less sharp results than a direct analysis in the time domain. As announced, in this paper we will develop the more recent technology of time-domain estimates to show properties of the Galerkin semidiscrete-in-space problem ({\em these are pertinent for other kind of time-discretization methods}, using Galerkin schemes) and of the full discretization of the problem. In particular, we will obtain a proof of convergence of the trapezoidal rule CQ method that is not directly reachable in the Laplace domain. The tools for this analysis are varied but not complicated: (a) identification of the weak convolutional retarded integral equations and layer potentials with strong solutions of problems in finite domains for finite time intervals, (b) interpretation of Galerkin semidiscretization-in-space with exotic transmission problems following \cite{LalSay09, SaySB}, (c) use of the well understood theory of $C_0$-groups of isometries \cite{Paz83} to obtain estimates for the resulting dynamical systems \cite{DomSaySB}, (d) understanding of the process of CQ time-discretization as a direct discretization of the exotic transmission problems in the time-domain (the essence of this idea is already present but not exploted in \cite{Lub94}) and (e) application of 
standard techniques for numerical analysis of the wave equation in bounded domains
to work out the analysis of the fully discrete method .

\paragraph{Foreword.} Elementary properties of basic Sobolev spaces $H^1(\Omega)$ and $H^{\pm1/2}(\Gamma)$, the trace operator and the weak normal derivative, will be used without further reference. The pertinent results can be found in any advanced textbook of elliptic PDE. The monograph \cite{McL00} contains all of them, as well as some results about steady-state layer potentials and integral operators that will be similar to the ones we will be developing in this paper (and that are used to prove background results that will be explicitly mentioned as they are used). While possible, we will make an effort in clarifying the source of constants in estimates. Once this is not practical any more, we will use the convention of admitting $C>0$ to be a constant indendendent of the associated discretization parameters ($h$ and $\kappa$ in this paper). Vector-valued distributions appear in the background of the theory of retarded layer potentials and integral operators. Their use  has been outsourced to some preliminary papers \cite{LalSay09, DomSaySB, SaySB} that relate strong and weak solutions of the wave equation. Here we will only employ the basic idea of a {\em causal distribution} with values on a space $X$ as a sequentially bounded map $\mathcal D(\mathbb R) \to X$ that vanishes when applied to elements of $\mathcal D((-\infty,0)$. The concepts of differentiation and Laplace transform are then identical to those of scalar distributions.

\section{An integral formulation of the scattering problem}\label{sec:2}

Let $\Omega^-$ be a bounded open set in $\mathbb R^d$ (with $d=2$ or
$3$) with Lipschitz boundary $\Gamma$. We admit the possibility that
$\Omega^-$ is not connected, but we demand the complementary domain
$\Omega^+:=\mathbb R^d\setminus\overline\Omega^-$ to be connected. The
problem of scattering of an incident wave by a sound-soft obstacle
can be written by means of the Initial Boundary Value Problem
\begin{subequations}\label{eq:2.1}
\begin{alignat}{4}
u_{tt}=\Delta u & \qquad & & \mbox{in $\Omega^+$, $\forall t>0$},\\
u+u^{\mathrm{inc}}=0 & & & \mbox{on $\Gamma$, $\forall t>0$},\\
u(\punto,0)=u_t(\punto,0)=0 & & & \mbox{on $\Gamma$}.
\end{alignat}
\end{subequations}
The incident wave $u^{\mathrm{inc}}$ is a known function. For the
model equation to be meaningful we have to assume that
$u^{\mathrm{inc}}(\punto,0)\equiv u_t^{\mathrm{inc}}(\punto,0)\equiv
0$ in a neighborhood of $\Gamma$. The unknown in \eqref{eq:2.1} is
the scattered wave field, while the total wave is
$u+u^{\mathrm{inc}}$. There is no need to impose a radiation
condition at infinity since causality of the wave equation takes
care of the fact that the support of the solution of \eqref{eq:2.1},
for any given $t$, is compact.

For all purposes (expository and analytic), it is convenient to
understand functions of the space and time variables as functions of
$t$ with values on a space of function. Therefore, instead of
considering $u=u(\mathbf x,t)$ as a function in $\Omega^+ \times
[0,\infty)$, we will consider $u=u(t)$, where $u(t)\in
H^1(\Omega^+)$ for all $t$. It will also be convenient to refer to
{\em causal functions} as functions $\xi:\mathbb R \to X$ (here $X$ is
any Hilbert space) such that $\xi(t)=0$ for all $t< 0$. The
concept of causality can be easily extended to distributions with
values in the space $X$.

An integral representation of the solution of \eqref{eq:2.1} starts
by taking the value of the incident wave on $\Gamma$ for positive
values of the time variable. If $\gamma^\pm:H^1(\Omega^\pm) \to
H^{1/2}(\Gamma)$ are the trace operators on $\Gamma$, we consider
the causal function
\begin{equation}\label{eq:2.0}
\varphi:\mathbb R \to H^{1/2}(\Gamma) \qquad \mbox{such that}\qquad
\varphi(t)=\gamma^+ u^{\mathrm{inc}}(\punto,t) \qquad \forall t>0.
\end{equation}
Note that the required regularity of the incident wave for this
process to be meaningful is local $H^1$ behavior in a neighborhood
of $\Gamma$ and that $u^{\mathrm{inc}}$ can have singularities away
from the scattering boundary (this is the case for waves originated
by acoustic sources).

Consider now the single and double layer retarded acoustic
potentials. Their strong expressions in the three dimensional case
(valid for smooth-in-space densities written as functions of the
space and time variables) are
\begin{equation}\label{eq:2.2}
(\mathcal S*\lambda)(\mathbf x,t):=\int_\Gamma \frac{\lambda(\mathbf
y,t-|\mathbf x-\mathbf y|)}{4\pi|\mathbf x-\mathbf y|}\,\mathrm
d\Gamma(\mathbf y) \qquad \mathbf x\in \mathbb R^3\setminus\Gamma,
\end{equation}
and
\begin{equation}\label{eq:2.3}
(\mathcal D*\varphi)(\mathbf x,t):=\int_\Gamma \nabla_{\mathbf
y}\left(\frac{\varphi(\mathbf z,t-|\mathbf x-\mathbf y|)}{4\pi |\mathbf
x-\mathbf y|}\right)\Big|_{\mathbf z=\mathbf y}\cdot
\boldsymbol\nu(\mathbf y)\, \mathrm d\Gamma(\mathbf y), \qquad
\mathbf x\in \mathbb R^d\setminus\Gamma,
\end{equation}
respectively.
In \eqref{eq:2.3} the vector $\boldsymbol\nu(\mathbf y)$ denotes the
unit outward pointing normal vector at the point $\mathbf y\in
\Gamma$. The notation for the layer potentials in \eqref{eq:2.2} and
\eqref{eq:2.3} uses the convolutional symbol to emphasize the fact
that these are time-convolution operators (see \cite{LalSay09b} for
a rigorous introduction of these operators in the sense of
distributions), since we will take advantage of this convolutional
structure for the discretization in the time variable.

By using direct arguments in the time domain 
\cite{LalSay09b} or employing Laplace transforms 
\cite{BamHaD86a, BamHaD86b}, it is possible to prove that if
$\lambda$ is a causal distribution with values in the space
$H^{-1/2}(\Gamma)$, then $\mathcal S*\lambda$ is a causal
distribution with values in the space
\begin{equation}\label{eq:2.4}
H^1_\Delta (\mathbb R^d\setminus\Gamma):= \{ v \in H^1(\mathbb
R^d\setminus\Gamma)\,:\, \Delta u\in L^2(\mathbb
R^d\setminus\Gamma)\},
\end{equation}
satisfying
\begin{equation}\label{eq:2.5}
\Delta (\mathcal S*\lambda)=\smallfrac{\mathrm d^2}{\mathrm
dt^2}(\mathcal S*\lambda) \qquad \mbox{in $\mathbb
R^d\setminus\Gamma$}
\end{equation}
and
\begin{equation}\label{eq:2.6} \jump{\gamma(\mathcal
S*\lambda)}:=\gamma^+ (\mathcal S*\lambda)-\gamma^+(\mathcal
D*\lambda)=0, \qquad \jump{\partial_\nu (\mathcal
S*\lambda)}:=\partial_\nu^+ (\mathcal
S*\lambda)-\partial_\nu^-(\mathcal S*\lambda)=\lambda.
\end{equation}
Note that the Laplace operator (in the sense of distributions in
$\mathbb R^d\setminus\Gamma$) and the exterior and interior normal
derivatives are well defined in the space $H^1_\Delta(\mathbb
R^d\setminus\Gamma)$. Expressions \eqref{eq:2.5} and
\eqref{eq:2.6} can be understood as equalities of causal 
distributions with values in $L^2(\mathbb R^d\setminus\Gamma)$,
$H^{1/2}(\Gamma)$ and $H^{-1/2}(\Gamma)$ respectively. The second
derivative in \eqref{eq:2.5} is defined in the sense of vector
valued distributions. The first of the jump relations \eqref{eq:2.6}
allows us to define the retarded integral operator
\begin{equation}\label{eq:2.7}
\mathcal V*\lambda:=\gamma^+(\mathcal S*\lambda)=\gamma^-(\mathcal
S*\lambda),
\end{equation}
whose integral expression in the three dimensional case (for smooth
enough densities) coincides with that of $\mathcal S*\lambda$, with
$\mathbf x\in \Gamma$ now.

If $\varphi$ is a causal distribution with values in
$H^{1/2}(\Gamma)$, then $\mathcal D*\varphi$ is a causal 
distribution with values in $H^1_\Delta (\mathbb
R^d\setminus\Gamma)$ satisfying
\begin{equation}\label{eq:2.8}
\Delta (\mathcal D*\varphi)=\smallfrac{\mathrm d^2}{\mathrm
dt^2}(\mathcal D*\varphi) \qquad \mbox{in $\mathbb R^d\setminus\Gamma$}
\end{equation}
and
\begin{equation}\label{eq:2.9}
\jump{\gamma(\mathcal D*\varphi)}=-\varphi, \qquad
\jump{\partial_\nu(\mathcal D*\varphi)}=0.
\end{equation}
We then define the retarded boundary integral operator
\begin{equation}\label{eq:2.9b}
\mathcal K*\xi=\smallfrac12 \gamma^+(\mathcal
D*\xi)+\smallfrac12\gamma^-(\mathcal D*\xi).
\end{equation}
An integral expression for this operator in the three dimensional
case coincides with that of the layer operator $\mathcal D*\xi$ (see
\eqref{eq:2.3}). Any causal $H^1_\Delta(\mathbb
R^d\setminus\Gamma)$-valued tempered distribution $u$ such that
\[
\ddot u=\Delta u \qquad \mbox{in $\mathbb R^d\setminus\Gamma$}
\]
(with equality as $L^2(\mathbb R^d\setminus\Gamma)$-valued
distributions and with the usual notation $\ddot u=\frac{\mathrm d^2
u}{\mathrm dt^2}$) can be represented with Kirchhoff's formula
\[
u=\mathcal S*\jump{\partial_\nu u}-\mathcal D*\jump{\gamma u}.
\]
Therefore, if we consider a solution of \eqref{eq:2.1} as a causal
tempered $H^1(\Omega^+)$-valued distribution that is extended by zero to
$\Omega^-$ and denote $\varphi$ as in \eqref{eq:2.0}, we can write
\begin{equation}\label{eq:2.10}
u=-\mathcal S*\lambda -\mathcal D*\varphi
\end{equation}
where $\lambda:=\partial_\nu^+u$. Using the definitions of the
boundary operators in \eqref{eq:2.7} and \eqref{eq:2.9b} as well as
the first of the jump relations \eqref{eq:2.9}, it follows that
$\lambda$ satisfies the following equation
\begin{equation}\label{eq:2.11}
\mathcal V*\lambda= \smallfrac12\varphi-\mathcal K*\varphi.
\end{equation}
The analysis of \cite{BamHaD86a} includes a proof of the unique
solvability of the operator equation in \eqref{eq:2.11} and a
Sobolev estimate for the solution of $\mathcal V*\xi=\varphi$. Also \cite[Theorem 6.2]{DomSaySB} contains an estimate of the solution operator for equation \eqref{eq:2.11} and its postprocessing \eqref{eq:2.10}.

\section{Discretization}\label{sec:3}

We start by assuming that the data function $\varphi$ has been
approximated. This is the usual approach of the engineering literature (see the exposition of a very similar family of methods for elastic waves in \cite{Sch01} for instance) and will be for us a motive to studying the propagation of errors in data, a study that will be needed for analysis of the fully discrete schemes.
We therefore assume that a
causal function $\varphi_h :\mathbb R \to H^{1/2}(\Gamma)$ is given
as an approximation to $\varphi$.

\subsection{Semidiscretization in space}

We consider a discrete space $X_h \subset
L^\infty(\Gamma)$ and substitute \eqref{eq:2.11} by the search of a
causal function $\lambda_h : \mathbb R \to X_h$ such that
\begin{equation}\label{eq:3.1}
\langle \mu_h, \mathcal V*\lambda_h\rangle_\Gamma = \langle\mu_h
,\smallfrac12\varphi_h-\mathcal K*\varphi_h \rangle_\Gamma \qquad
\forall \mu_h \in X_h.
\end{equation}
Here and in the sequel the angled brackets denote the
$H^{-1/2}(\Gamma) \times H^{1/2}(\Gamma)$ duality product. The
solution of \eqref{eq:3.1} is then used for the discrete
representation formula
\begin{equation}\label{eq:3.2}
u_h:=-\mathcal S*\lambda_h-\mathcal D*\varphi_h.
\end{equation}

For the sake of clarity, let us write down  the system
\eqref{eq:3.1}  in the three dimensional case, when data
have been approximated by a function $\varphi_h :\mathbb R \to
Y_h\subset W^{1,\infty}(\Gamma)$, where $Y_h$ is finite dimensional.
Let $\{ N_1,\ldots,N_J\}$ and $\{ M_1,\ldots,M_K\}$ be respective
bases of $X_h$ and $Y_h$. Data and unknown can then be represented
by their coefficients:
\begin{equation}\label{eq:3.bases}
\varphi_h(t)= \sum_{k=1}^K \varphi_k(t) M_k, \qquad \lambda_h(t) =
\sum_{j=1}^J \lambda_j(t) N_j.
\end{equation}
This is equivalent to substituting the $Y_h$- and $X_h$-valued
functions by a finite set of casual scalar functions. Problem
\eqref{eq:3.1} is equivalent to the system
\begin{eqnarray}\nonumber
& & \hspace{-2cm} \sum_{j=1}^J\int_\Gamma \int_\Gamma
\frac{N_i(\mathbf x)N_j(\mathbf y)}{4\pi |\mathbf x-\mathbf y|}
\lambda_j(t-|\mathbf x-\mathbf y|) \mathrm d\Gamma(\mathbf x)\mathrm
d\Gamma(\mathbf y)
\\
\nonumber & =& \frac12 \sum_{k=1}^K \left(\int_\Gamma N_i(\mathbf
x)M_k(\mathbf x)\mathrm d\Gamma (\mathbf x)\right) \varphi_k(t)\\
& & +\sum_{k=1}^K \int_\Gamma\int_\Gamma \mathrm M_{i,k}(\mathbf
x,\mathbf y)\Big(\frac{\varphi_k(t-|\mathbf x-\mathbf y|)}{|\mathbf
x-\mathbf y|}+\dot \varphi_k(t-|\mathbf x-\mathbf y|)\Big)\mathrm
d\Gamma(\mathbf x)\mathrm d\Gamma(\mathbf y)\label{eq:3.3}
\end{eqnarray}
(for $i=1,\ldots,J$), where
\[
\mathrm M_{i,k}(\mathbf
x,\mathbf y):= \frac{(\mathbf x-\mathbf
y)\cdot\boldsymbol\nu(\mathbf y)}{4\pi|\mathbf x-\mathbf y|^2}
N_i(\mathbf x)M_k(\mathbf y).
\]
The following spaces will be relevant in the sequel:
\begin{eqnarray*}
\mathcal W^k_0(\mathbb R;X) &:=&\{ \rho\in \mathcal C^{k-1}(\mathbb R;X) \,:\, \mathrm{supp}\,\rho \subset [0,\infty),\quad \rho^{(k)}\in L^1_{\mathrm{loc}}(\mathbb R;X)\},\\
\mathcal C^k_0(\mathbb R;X) &:=& \{ \rho\in \mathcal C^{k}(\mathbb R;X) \,:\, \mathrm{supp}\,\rho \subset [0,\infty)\}.
\end{eqnarray*}

\begin{theorem}\label{th:3.1}
Let $\varphi \in \mathcal W^4_0(\mathbb R;H^{1/2}(\Gamma))$, assume that the solution of \eqref{eq:2.11} satisfies $\lambda \in \mathcal W^2_0(\mathbb R;H^{-1/2}(\Gamma))$ and let $u$ be given by \eqref{eq:2.10}. Assume also that $\varphi_h \in \mathcal W^4_0(\mathbb R; Y_h)$. Then the semidiscrete equation \eqref{eq:3.1} has a unique solution $\lambda_h$. Let finally $u_h$ be given by \eqref{eq:3.2}. Then, for all $t\ge 0$,
\begin{eqnarray}\nonumber
\|(\lambda-\lambda_h)(t)\|_{-1/2,\Gamma} & \le & C (1+t) \Big( \sum_{\ell=0}^2 \int_0^t \|(\lambda^{(\ell)}-\Pi_h\lambda^{(\ell)})(\tau)\|_{-1/2,\Gamma}\mathrm d\tau\\
\label{eq:3.A}
& & \hspace{2cm} +\sum_{\ell=0}^4 \| (\varphi^{(\ell)}-\varphi_h^{(\ell)})(\tau)\|_{1/2,\Gamma}\mathrm d\tau\Big)\\
\nonumber
\| (u-u_h)(t)\|_{1,\mathbb R^d}  & \le & C (1+t) \Big( \sum_{\ell=0}^2 \int_0^t \|(\lambda^{(\ell)}-\Pi_h\lambda^{(\ell)})(\tau)\|_{-1/2,\Gamma}\mathrm d\tau\\
\label{eq:3.B}
& & \hspace{2cm} +\sum_{\ell=0}^2 \| (\varphi^{(\ell)}-\varphi_h^{(\ell)})(\tau)\|_{1/2,\Gamma}\mathrm d\tau\Big),
\end{eqnarray}
where $\Pi_h:H^{-1/2}(\Gamma)\to X_h$ is the orthogonal projection onto $X_h$. 
\end{theorem}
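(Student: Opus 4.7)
The plan is to recast both the continuous equation \eqref{eq:2.11} and the Galerkin semidiscretization \eqref{eq:3.1} as \emph{exotic transmission problems} for the wave equation in $\mathbb R^d\setminus\Gamma$, following the program of \cite{LalSay09,SaySB,DomSaySB}. Concretely, I would show that the semidiscrete potential $u_h$ defined in \eqref{eq:3.2} is the unique causal $H^1_\Delta(\mathbb R^d\setminus\Gamma)$-valued distribution with $\ddot u_h = \Delta u_h$, with jumps $\jump{\gamma u_h}$ and $\jump{\partial_\nu u_h}$ dictated by the jump relations of $\mathcal S*\lambda_h$ and $\mathcal D*\varphi_h$, with the constraint $\jump{\partial_\nu u_h}\in X_h$, and with a projected interior-trace condition of the form $\langle \mu_h,\gamma^- u_h\rangle_\Gamma=0$ for all $\mu_h\in X_h$ (which is exactly \eqref{eq:3.1} once one uses $\mathcal V*\lambda_h=\gamma^-(\mathcal S*\lambda_h)$ and the averaged-trace definition of $\mathcal K$). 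Well-posedness of this transmission problem simultaneously yields existence and uniqueness of $\lambda_h$.

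\textbf{Error decomposition.} Setting $e := u-u_h = -\mathcal S*(\lambda-\lambda_h) - \mathcal D*(\varphi-\varphi_h)$ places $e$ in $H^1_\Delta(\mathbb R^d\setminus\Gamma)$, solving $\ddot e=\Delta e$ with jumps controlled by $\varphi-\varphi_h$ and $\lambda-\lambda_h$. To use the Galerkin orthogonality I would insert the $H^{-1/2}(\Gamma)$-orthogonal projection $\Pi_h\lambda$ and split $\lambda-\lambda_h=(\lambda-\Pi_h\lambda)+(\Pi_h\lambda-\lambda_h)$. The auxiliary potential $\widetilde u$ obtained by replacing $\lambda_h$ with $\Pi_h\lambda$ in the ansatz satisfies a problem whose data are the pure approximation defects $\lambda-\Pi_h\lambda$ and $\varphi-\varphi_h$; the remaining piece $\widetilde u - u_h$ solves an exotic problem whose normal-derivative jump already lies in $X_h$ and whose interior trace is orthogonal (in the $X_h$-sense) to $X_h$, hence is controllable by $\Pi_h\lambda-\lambda$ alone.

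\textbf{Energy estimates.} For each of these two pieces, I would apply the time-domain $C_0$-group-based estimates of \cite{DomSaySB}. The canonical procedure lifts the inhomogeneous jump data into a causal auxiliary field in $H^1_\Delta(\mathbb R^d\setminus\Gamma)$; subtracting the lift produces a zero-jump wave problem whose right-hand side equals the Laplacian minus the second time derivative of the lift. Integrating the resulting semigroup estimate in time then yields the $(1+t)$ factor together with integrals of the data and its time derivatives. The bound \eqref{eq:3.B} for $\|e(t)\|_{1,\mathbb R^d}$ comes out with two time derivatives of both $\varphi-\varphi_h$ and $\lambda-\Pi_h\lambda$. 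For \eqref{eq:3.A}, one uses $\lambda-\lambda_h=-\jump{\partial_\nu e}$ together with the trace bound
\[
\|\jump{\partial_\nu e}\|_{-1/2,\Gamma}\le C\bigl(\|e\|_{1,\mathbb R^d\setminus\Gamma}+\|\Delta e\|_{0,\mathbb R^d\setminus\Gamma}\bigr)
\]
and the wave equation $\Delta e = \ddot e$; applying the energy estimate to $\ddot e$ costs two extra time derivatives of $\varphi-\varphi_h$, accounting for the exponent $4$, while the $\lambda$-defect term keeps its two derivatives thanks to Galerkin orthogonality.

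\textbf{Main obstacle.} The principal technical hurdle is the clean formulation of the exotic transmission problem associated to \eqref{eq:3.1} together with the verification that the corresponding unbounded operator generates a $C_0$-group of isometries on a Hilbert space whose norms are independent of $h$, so that the estimates of \cite{DomSaySB} transfer uniformly to the semidiscrete setting. Once this abstract framework is in place, the derivative-counting in \eqref{eq:3.A}-\eqref{eq:3.B} follows from a delicate but systematic bookkeeping of the trace liftings and of the time differentiations of the wave equation required to control $\Delta e$.
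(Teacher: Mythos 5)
Your general toolbox is the right one---the paper also recasts the semidiscrete problems as exotic transmission problems and runs time-domain energy estimates in the spirit of \cite{DomSaySB}---but your error decomposition contains a step that fails, and the derivative bookkeeping for \eqref{eq:3.A} depends on it. You split $e=(u-\widetilde u)+(\widetilde u-u_h)$ with $\widetilde u:=-\mathcal S*\Pi_h\lambda-\mathcal D*\varphi_h$, and claim that $\widetilde u-u_h=-\mathcal S*(\Pi_h\lambda-\lambda_h)$ has interior trace in $X_h^\circ$ and is ``controllable by $\Pi_h\lambda-\lambda$ alone.'' It is not: testing with $\mu_h\in X_h$ and subtracting \eqref{eq:3.1} from the Galerkin-tested form of \eqref{eq:2.11} gives
\[
\langle \mu_h,\mathcal V*(\Pi_h\lambda-\lambda_h)\rangle_\Gamma
=\langle \mu_h,\mathcal V*(\Pi_h\lambda-\lambda)\rangle_\Gamma
+\langle \mu_h,\smallfrac12(\varphi-\varphi_h)-\mathcal K*(\varphi-\varphi_h)\rangle_\Gamma,
\]
and the second term does not vanish, so this piece carries the data error as well; no Galerkin orthogonality removes it. The paper resolves exactly this by a \emph{three}-term splitting, $\lambda-\lambda_h=(\lambda-\Pi_h\lambda)+(\Pi_h\lambda-\lambda)_h^{\mathrm G}+\lambda_h^{\varphi-\varphi_h}$, where the middle term is the Galerkin projection (Theorem \ref{th:4.1}) applied to the defect and the last is the Galerkin solver (Theorem \ref{th:5.1}) applied to $\varphi-\varphi_h$; correspondingly $u-u_h$ splits (up to signs) into $\mathcal S*(\lambda-\Pi_h\lambda)$, $\mathcal S*(\lambda-\Pi_h\lambda)_h^{\mathrm G}$, and $u_h^{\varphi-\varphi_h}$. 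Note also that in your splitting the piece $u-\widetilde u$ contains $\mathcal D*(\varphi-\varphi_h)$ detached from any Galerkin structure, so you would need a standalone time-domain bound for the double layer potential; the paper never estimates this term in isolation, but only bundled into the solver output $u_h^{\varphi-\varphi_h}=-\mathcal S*\lambda_h^{\varphi-\varphi_h}-\mathcal D*(\varphi-\varphi_h)$, and that bundling is part of what produces the stated derivative counts.

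The second problem is the asymmetric count of derivatives in \eqref{eq:3.A}. Your plan is to bound $\jump{\partial_\nu e}$ via $\Delta e=\ddot e$ and ``apply the energy estimate to $\ddot e$,'' i.e.\ differentiate the error problem twice in time. That costs two extra derivatives of \emph{all} data entering the problem, including the $\lambda$-defect, and would yield four derivatives of $\lambda-\Pi_h\lambda$ rather than the two in \eqref{eq:3.A}; appealing to Galerkin orthogonality cannot repair this, since in your decomposition the piece containing the $\lambda$-defect is the same piece containing $\varphi-\varphi_h$. In the paper the two error sources live in separate subproblems, and the $2$-versus-$4$ asymmetry comes from \emph{how the data enter} each of them, not from orthogonality: in the Galerkin projection problem, $\lambda$ enters through $\mathcal S*\lambda$ (whose normal-derivative jump is exactly $\lambda$) and through a natural transmission condition, and the Laplacian of the evolution correction is bounded directly by $\|\Delta v(t)\|\le\int_0^t\|\nabla f(\tau)\|\,\mathrm d\tau$ (Proposition \ref{prop:4.2}) with no further time differentiation; in the Galerkin solver problem, $\varphi$ enters through the essential jump condition $\jump{\gamma u_h^\varphi}=\varphi$, whose lifting forces the elliptic correction $u_h^1$ in \eqref{eq:5.21} to depend on $\ddot\varphi$, so that bounding $\nabla f$ with $f=\Delta u_h^1-\ddot u_h^1$ brings in $\varphi^{(4)}$. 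Without reproducing this structural separation, your argument does not reach the exponents claimed in the theorem.
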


Note that by \cite[Theorem 6.2]{DomSaySB}, if $\varphi \in \mathcal W^4_0(\mathbb R;H^{1/2}(\Gamma))$, then $\lambda \in \mathcal C^0_0(\mathbb R;H^{-1/2}(\Gamma))$. Time regularity of the solution is then guaranteed by the sufficient (but not necessary) condition $\varphi \in \mathcal W^6_0(\mathbb R;H^{1/2}(\Gamma))$. The proof of Theorem \ref{th:3.1} will be given at the end of Section \ref{sec:5}. If data are not discretized the second group of terms in the error estimates of Theorem \ref{th:3.1} is not needed. 

\subsection{Full discretization}

In a final step, we substitute the four time convolutions that
appear in \eqref{eq:3.1} and \eqref{eq:3.2} with a discrete
convolution based on one of the applicable Convolution Quadrature
methods. For a fixed time-step $\kappa>0$, the CQ method applied to
the discretization of \eqref{eq:3.1} and \eqref{eq:3.2} produces (in
theory) casual functions $\lambda_h^\kappa :\mathbb R \to X_h$ and
$u_h^\kappa:\mathbb R \to H^1_\Delta(\mathbb R^d\setminus\Gamma)$.
In practice, these functions are evaluated in equally spaced time steps $t_n:=
n\,\kappa$ and only these values of the functions are obtained. To
obtain values at other times, the method has to be run again,
starting at $t_0:=-\varepsilon \, \kappa$ for instance. Therefore,
even if the theory of CQ deals with functions of continuous time, in
practice the solutions can be understood as functions of discrete
time, i.e., sequences.

Let us briefly explain what the CQ discretization of
\eqref{eq:3.3} consists of. First of all, we consider the complex
matrix valued functions $\mathbb V_h(s) \in \mathbb R^{J\times J}$
and $\mathbb K_h(s) \in \mathbb R^{J\times K}$ with elements
\begin{eqnarray}\label{eq:3.10}
\mathbb V_h(s)_{i,j} &:=& \int_\Gamma\int_\Gamma \frac{N_i(\mathbf
x)N_j(\mathbf y)}{4\pi |\mathbf x-\mathbf y|} e^{-s|\mathbf
x-\mathbf y|}\mathrm d\Gamma(\mathbf x)\mathrm d\Gamma(\mathbf y)\\
\label{eq:3.11}
\mathbb K_h(s)_{i,j} &:=& \int_\Gamma\int_\Gamma \Phi_{i,k}(\mathbf
x,\mathbf y) \left(1+s|\mathbf x-\mathbf
y|\right)\frac{e^{-s|\mathbf x-\mathbf y|}}{|\mathbf x-\mathbf
y|}\mathrm d\Gamma(\mathbf x)\mathrm d\Gamma(\mathbf y)
\end{eqnarray}
and the matrix with elements
\[
\mathbb I_{i,k}:=\int_\Gamma N_i(\mathbf x)M_k(\mathbf x)\mathrm
d\Gamma (\mathbf x).
\]
Note that $\mathbb V_h(s)$ and $\mathbb K_h(s)$ are the Laplace
transforms of the operators that appear in \eqref{eq:3.3}. We then
construct the Taylor expansions
\begin{equation}\label{eq:3.4}
\mathbb V_h(\kappa^{-1}\delta(\zeta))=\sum_{n=0}^\infty \mathbb
V_h^\kappa [n] \zeta^n \qquad \mathbb
K_h(\kappa^{-1}\delta(\zeta))=\sum_{n=0}^\infty \mathbb K_h^\kappa
[n] \zeta^n,
\end{equation}
where $\delta$ is one of the following functions
\[
\delta(\zeta):= \left\{ \begin{array}{ll}1-\zeta, & \mbox{(backward Euler method)}\\ 
\frac32-2\zeta+\frac12\zeta^2 & \mbox{(BDF2)}\\
2\,\frac{1-\zeta}{1+\zeta} &\mbox{(trapezoidal rule)}.
\end{array}\right.
\]
 Data
discretization consists of the construction of vectors
$\boldsymbol\varphi_h^\kappa[n]:=(\varphi_k(n\,\kappa))_{k=1}^K$ for $n\ge 0$ (recall \eqref{eq:3.bases}).
The unknowns are vectors $\boldsymbol\lambda_h^\kappa[n]\in \mathbb
R^J$ satisfying
\begin{equation}\label{eq:3.5}
\sum_{m=0}^n \mathbb
V_h^\kappa[m]\boldsymbol\lambda_h^\kappa[n-m]=\smallfrac12 \mathbb
I_h \boldsymbol\varphi_h^\kappa[n]+\sum_{m=0}^n \mathbb
K_h^\kappa[m]\boldsymbol\varphi_h^\kappa[n-m], \qquad n\ge 0.
\end{equation}
We can thus associate
\[
\boldsymbol\lambda_h^\kappa[n]=(\lambda_1[n],\ldots,\lambda_J[n])\qquad\longmapsto\qquad
\lambda_h^\kappa[n]:=\sum_{j=1}^J \lambda_j[n] N_j \in X_h
\]
to obtain a fully discrete approximation of $\lambda_h(t_n)\approx \lambda(t_n)$.

The postprocessing step to compute the approximated scattered field can be
explained in a similar way. The $s$-domain semidiscrete single and
double layer potentials correspond to vector valued functions with
domain $\mathbb R^3\setminus\Gamma$
\begin{eqnarray*}
\mathbb S_h(s)_j &:=& \int_\Gamma \frac{e^{-s|\punto-\mathbf
y|}}{4\pi |\punto-\mathbf y|}N_j(\mathbf y)\mathrm d\Gamma(\mathbf
y),\\
\mathbb D_h(s)_k &:=& \int_\Gamma \frac{(\punto-\mathbf
y)\cdot\boldsymbol\nu(\mathbf y)}{4\pi |\punto-\mathbf y|^2}
(1+s|\punto-\mathbf y|)\frac{e^{-s|\punto-\mathbf
y|}}{|\punto-\mathbf y|}M_k(\mathbf y)\mathrm d\Gamma(\mathbf y).
\end{eqnarray*}
The CQ method uses the same strategy as in \eqref{eq:3.4} to produce
sequences of vector valued functions $\mathbb S_h^\kappa[n]$ and
$\mathbb D_h^\kappa[n]$, defined in $\mathbb R^3\setminus\Gamma$, and
uses them to construct the approximations
\[
u_h^\kappa[n]=-\sum_{m=0}^n \mathbb
S_h^\kappa[m]\boldsymbol\lambda_h^\kappa[n-m]-\sum_{m=0}^n \mathbb
D_h^\kappa[m]\boldsymbol\varphi_h^\kappa[n-m], \qquad n\ge 0.
\]
In the two-dimensional case the expressions for the fully discrete
method are very similar, using Hankel functions instead of
exponential expressions. For instance,
\begin{eqnarray*}
\mathbb V_h(s)_{i,j}&:=&\frac\imath4\int_\Gamma\int_\Gamma
H^{(1)}_0(\imath s |\mathbf x-\mathbf y|) N_i(\mathbf x)N_j(\mathbf
y)\mathrm d\Gamma(\mathbf x)\mathrm d\Gamma(\mathbf y),\\
\mathbb K_h(s)_{i,k} &:=& -\frac{s}4 \int_\Gamma\int_\Gamma
H^{(1)}_1(\imath s|\mathbf x-\mathbf y|)\frac{(\mathbf x-\mathbf
y)\cdot\boldsymbol\nu(\mathbf y)}{|\mathbf x-\mathbf
y|}\,N_i(\mathbf x)\,M_k(\mathbf y)\,\mathrm d\Gamma(\mathbf
x)\mathrm d\Gamma(\mathbf y).
\end{eqnarray*}

\begin{remark}
When data are not approximated, the indices $k$ have to be ignored.
Instead of having a matrix $\mathbb K_h(s)$ we have operators
$H^{1/2}(\Gamma) \to \mathbb R^J$ obtained by substituting the basis
function $M_k$ by a general element of $H^{1/2}(\Gamma)$. In this
way, the matrix-vector products $\mathbb K_h^\kappa [m]
\boldsymbol\varphi_h^\kappa [n-m]$ have to be substituted by the
action of operators $\mathbb K^\kappa[m]:H^{1/2}(\Gamma) \to
\mathbb R^J$ on elements
$\varphi^\kappa[n-m]=\varphi(t_{n-m})\in H^{1/2}(\Gamma)$.
Similar changes have to be applied to the double layer potential and
to the matrix $\mathbb I_h$ which is now substituted by an operator
$H^{1/2}(\Gamma) \to \mathbb R^J$ corresponding to testing a
function with the basis functions $N_i$.
\end{remark}

The analysis of the difference between the semidiscrete and the fully discrete solution at the different time-steps is carried out separately for the three time-discretization methods. This is done in Section \ref{sec:6}.

\section{The semidiscrete Galerkin projection}\label{sec:4}

Consider a casual smooth function $\lambda:\mathbb R \to H^{-1/2}(\Gamma)$ and a finite dimensional space $X_h \subset H^{-1/2}(\Gamma)$. The aim of this section is the analysis of the semidiscrete discretization process looking for a causal function $\lambda_h^{\mathrm G}:\mathbb R\to X_h$ such that
\begin{equation}\label{eq:4.1}
\langle \mu_h,(\mathcal V*\lambda_h^{\mathrm G})(t)\rangle_\Gamma=\langle \mu_h,(\mathcal V*\lambda)(t)\rangle_\Gamma \qquad \forall \mu_h\in X_h, \qquad \forall t
\end{equation}
and outputs the potential
\begin{equation}\label{eq:4.2}
u_h^{\mathrm G}:=\mathcal S*\lambda_h^{\mathrm G}.
\end{equation}
 Using a simple Laplace transform argument and the estimates of \cite{BamHaD86a} (see also \cite{LalSay09}), it is easy to prove that \eqref{eq:4.1} has at most one continuous causal solution.

Before proceeding to state and prove the main result of this section, we are going to introduce some constants related to the geometry of the problem and associated functional inequalities.

\subsection{Some inequalities}\label{sec:4.1}

Let  $R>0$ be such that
\[
\overline{\Omega_-} \subset B_0:=B(\mathbf 0;R)=\{ \mathbf x\in \mathbb R^d\,:\, |\mathbf x|< R\}
\]
and let us consider the balls $B_T:=B(\mathbf 0;R+T)$ for $T\ge 0$. Let then $C_T>0$ be taken so that
\begin{equation}\label{eq:4.3}
\| u\|_{B_T}\le C_T \| \nabla u\|_{B_T} \qquad \forall u\in H^1_0(B_T).
\end{equation}
A simple scaling argument shows that we can take $C_T=C_0(1+T/R)$. Therefore, {\em the constant $C_T$ grows linearly with $T$.} The trace operator on the boundary $\partial B_T$ will be denoted $\gamma_T$.

We will also consider a constant for the following two-sided trace inequality:
\begin{equation}\label{eq:4.4}
\|\gamma u\|_{1/2,\Gamma}\le C_\Gamma \| u\|_{1,B_0} \qquad \forall u\in H^1(B_0).
\end{equation}
Next, we consider a one-sided lifting of the trace onto $\Gamma$ in the form of a bounded linear operator $L:H^{1/2}(\Gamma) \to H^1(\mathbb R^d\setminus\Gamma)$ such that
\begin{equation}\label{eq:4.5}
L \varphi \equiv 0 \quad\mbox{in $\Omega_+$} \quad\mbox{and}\quad \gamma^-L\varphi=\varphi \quad \forall \varphi \in H^{1/2}(\Gamma).
\end{equation}
Note that $\jump{\gamma L\varphi}=\varphi$ and that there exists $C_L>0$ such that
\begin{equation}\label{eq:4.6}
\| L\varphi\|_{1,\mathbb R^d\setminus\Gamma} =\| L\varphi\|_{1,\Omega_-}\le C_L \|\varphi\|_{1/2,\Gamma} \qquad \forall \varphi \in H^{1/2}(\Gamma).
\end{equation}
Finally, using the weak definition of the normal derivative, we can fix a constant $C_\nu>0$ such that
\begin{equation}\label{eq:4.7}
\|\jump{\partial_\nu u}\|_{-1/2,\Gamma}\le C_\nu \Big( \| \Delta u\|_{B_0\setminus\Gamma}^2+\|\nabla u\|_{B_0\setminus\Gamma}^2\Big)^{1/2}\qquad \forall u\in H^1_\Delta (B_0\setminus\Gamma).
\end{equation}

\subsection{Estimates for the Galerkin projection}\label{sec:4.2}

\begin{theorem}\label{th:4.1}
Let $\lambda \in \mathcal W^2_0(\mathbb R;H^{-1/2}(\Gamma))$. Then the solution of the semidiscrete problem \eqref{eq:4.1} and its associated potential \eqref{eq:4.2} satisfy
\[
\lambda_h^{\mathrm G}\in \mathcal C_0(\mathbb R;H^{-1/2}(\Gamma)), \qquad u_h^{\mathrm G}\in \mathcal C^1_0(\mathbb R;H^1(\mathbb R^d)).
\]
Moreover, for all $t\ge 0$,
\begin{eqnarray}\label{eq:4.8}
\| u_h^{\mathrm G}(t)\|_{1,\mathbb R^d} & \le & 2 C_\Gamma \Big(\| \lambda(t)\|_{-1/2,\Gamma}+\sqrt{1+C_t^2}\,B_2^{-1/2}(\lambda,t)\Big),\\
\label{eq:4.9}
\| \lambda_h^{\mathrm G}(t)\|_{-1/2,\Gamma} & \le & (1+ C_\Gamma C_\nu )\| \lambda(t)\|_{-1/2,\Gamma}+ \sqrt2 C_\Gamma C_\nu B_2^{-1/2}(\lambda,t),
\end{eqnarray}
where
\[
B_2^{-1/2}(\lambda,t):=\int_0^t \Big(\|\lambda(\tau)\|_{-1/2,\Gamma}+\|\ddot\lambda(\tau)\|_{-1/2,\Gamma}\Big)\mathrm d\tau.
\]
\end{theorem}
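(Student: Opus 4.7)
The strategy is to recast the semidiscrete equation \eqref{eq:4.1}--\eqref{eq:4.2} as an exotic transmission problem in the spirit of \cite{LalSay09, SaySB, DomSaySB} and to exploit the resulting skew-adjoint first-order dynamics. Since $u_h^{\mathrm G} = \mathcal S * \lambda_h^{\mathrm G}$ lies in $H^1(\mathbb R^d)$ with $[\partial_\nu u_h^{\mathrm G}] = \lambda_h^{\mathrm G}\in X_h$, and \eqref{eq:4.1} rewrites as $\langle \mu_h, \gamma u_h^{\mathrm G} - \gamma u\rangle_\Gamma = 0$ for every $\mu_h\in X_h$ (with $u := \mathcal S * \lambda$), the error $e_h := u_h^{\mathrm G} - u$ is a causal $H^1(\mathbb R^d)$-valued map that solves the wave equation in $\mathbb R^d\setminus\Gamma$ with vanishing initial data, subject to the paired orthogonalities $\gamma e_h \perp X_h$ and $[\partial_\nu e_h] + \lambda \in X_h$.

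I would then introduce a time-indexed steady-state Galerkin-exotic lifting $\Lambda_h(t)\in \mathcal V_h := \{v\in H^1(\mathbb R^d)\,:\,\gamma v \perp X_h\}$ defined variationally by
\[
(\Lambda_h,v)_{H^1(\mathbb R^d)} = -\langle \lambda, \gamma v\rangle_\Gamma \qquad \forall v \in \mathcal V_h.
\]
Lax--Milgram gives $\|\Lambda_h(t)\|_{1,\mathbb R^d}\le C_\Gamma \|\lambda(t)\|_{-1/2,\Gamma}$, and the Euler--Lagrange equation yields $-\Delta\Lambda_h + \Lambda_h = 0$ in $\mathbb R^d\setminus\Gamma$ together with $[\partial_\nu \Lambda_h]+\lambda \in X_h$. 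Hence $w_h := e_h - \Lambda_h$ satisfies the constraints ($\gamma w_h \perp X_h$, $[\partial_\nu w_h]\in X_h$), has vanishing initial data, and solves the driven wave equation $\ddot w_h - \Delta w_h = \Lambda_h - \ddot\Lambda_h$.

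I would then cast this as a first-order system $\dot U = A_h U + G$ on the energy space $\mathcal H_h := \mathcal V_h \times L^2(\mathbb R^d)$, where $A_h(v,f):=(f,\Delta v)$ on the domain enforcing also $[\partial_\nu v]\in X_h$ and $\gamma f\perp X_h$, and $G := (0, \Lambda_h - \ddot\Lambda_h)$. Integration by parts, in which the boundary duality pairing $\langle [\partial_\nu v], \gamma f\rangle_\Gamma$ vanishes precisely because of the paired orthogonalities, shows that $A_h$ is skew-adjoint; it therefore generates a $C_0$-group of isometries \cite{Paz83}. Duhamel's formula yields $\|U(t)\|_E \le \int_0^t \|G(\tau)\|_E\, \mathrm d\tau \le C_\Gamma B_2^{-1/2}(\lambda,t)$, which controls $\|\nabla w_h(t)\|_0$. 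Applying $A_h$ inside Duhamel's formula (legitimate since $G(\tau)\in D(A_h)$ with $A_h G = (\Lambda_h - \ddot\Lambda_h,0)$) gives the analogous bound on $\|A_h U(t)\|_E$ and in particular on $\|\Delta w_h(t)\|_0$.

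Finally, I would assemble the two estimates. For \eqref{eq:4.8}, the decomposition $u_h^{\mathrm G} = u + \Lambda_h + w_h$ gives $\|u_h^{\mathrm G}\|_{1,\mathbb R^d} \le \|u\|_1 + \|\Lambda_h\|_1 + \|w_h\|_1$; Poincar\'e \eqref{eq:4.3} on $B_t$ (applicable because $w_h(t)$ vanishes outside $B_t$ by finite propagation speed) inflates the gradient bound on $w_h$ by $\sqrt{1+C_t^2}$, and an analogous bound on $\|u\|_1$ from \cite[Theorem 6.2]{DomSaySB} combines with $\|\Lambda_h\|_1\le C_\Gamma\|\lambda\|$ to produce the overall factor $2C_\Gamma$. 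For \eqref{eq:4.9}, write $\lambda_h^{\mathrm G} = \lambda + [\partial_\nu\Lambda_h] + [\partial_\nu w_h]$ and apply the weak normal-derivative inequality \eqref{eq:4.7} to each lifted summand: the bound on $[\partial_\nu\Lambda_h]$ uses $\Delta\Lambda_h = \Lambda_h$ directly, while the bound on $[\partial_\nu w_h]$ relies on the $\|\Delta w_h(t)\|_0$ estimate from the previous step, producing respectively the $C_\Gamma C_\nu\|\lambda\|$ and the $\sqrt 2\, C_\Gamma C_\nu B_2^{-1/2}$ contributions. The main obstacle is verifying that $A_h$ is skew-\emph{adjoint} (maximality of the domain, not just skew-symmetry), which hinges on well-posedness of the stationary exotic problem and requires a state space on which the energy seminorm is actually a norm (typically by restricting to a growing ball $B_T$); subsequent tracking of the constants $C_\Gamma$, $C_\nu$, and $C_t$ is tedious but routine.
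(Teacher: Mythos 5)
Your strategy is, at its core, the same as the paper's: the splitting $u_h^{\mathrm G}=\mathcal S*\lambda+\Lambda_h+w_h$, with an elliptic lifting solving $-\Delta\Lambda_h+\Lambda_h=0$ under the exotic transmission conditions and a wave correction driven by $\Lambda_h-\ddot\Lambda_h$, is exactly the paper's decomposition $u_h^{\mathrm G}=\mathcal S*\lambda+u_h^0+v_h^0$ (equations \eqref{eq:4.15b}, \eqref{eq:4.16}, \eqref{eq:4.17}), and the final assembly of \eqref{eq:4.8}--\eqref{eq:4.9} via \eqref{eq:4.7} is the same. The genuine gap is in where your objects live. You pose everything on $\mathbb R^d$, and then two steps break. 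First, the claim that ``$w_h(t)$ vanishes outside $B_t$ by finite propagation speed'' is false: the error $e_h=\mathcal S*(\lambda_h^{\mathrm G}-\lambda)$ does vanish outside $B_t$, but the lifting $\Lambda_h(t)$ solves a \emph{stationary} problem and its support is unbounded (solutions of $-\Delta u+u=0$ in the exterior domain decay exponentially but do not vanish on any open set unless identically zero), so $w_h(t)=e_h(t)-\Lambda_h(t)$ equals $-\Lambda_h(t)\neq 0$ far from $\Gamma$. Hence the Poincar\'e inequality \eqref{eq:4.3} cannot be applied to $w_h(t)$, and your passage from the gradient bound to the full $H^1$ bound (the $\sqrt{1+C_t^2}$ factor in \eqref{eq:4.8}) is unjustified. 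Second, as you yourself flag, on $\mathbb R^d$ the energy quantity $\triple{(v,f)}$ does not make $\mathcal V_h\times L^2(\mathbb R^d)$ a Hilbert space (no Poincar\'e inequality on the whole space, no completeness), so the skew-adjoint generator and the $C_0$-group of isometries are not available in the setting where you invoke them.

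Both defects are cured by one device, and it is the device the paper is built on: truncate to the ball $B_T$ with a homogeneous Dirichlet condition on $\partial B_T$ from the outset, posing the lifting in $V_h^T=\{v\in H^1_0(B_T):\gamma v\in X_h^\circ\}$ (the paper's \eqref{eq:4.19}) and the wave correction in the triple $D_h^T\subset V_h^T\subset L^2(B_T)$ (Propositions \ref{prop:4.2} and \ref{prop:4.3}). Then Poincar\'e holds because of the boundary condition --- no support property of $w_h$ is needed --- and the energy seminorm is an equivalent norm. But this is not the routine afterthought your closing sentence suggests: once the pieces are defined on $B_T$ they are no longer manifestly related to the semidiscrete solution, so one needs the identification/uniqueness step (the paper's Proposition \ref{prop:4.1b}) showing that the sum of the three truncated pieces coincides with $u_h^{\mathrm G}$ on $[0,T]$. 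That constructive step is also what proves the regularity assertions $\lambda_h^{\mathrm G}\in\mathcal C_0(\mathbb R;H^{-1/2}(\Gamma))$ and $u_h^{\mathrm G}\in\mathcal C^1_0(\mathbb R;H^1(\mathbb R^d))$, which your argument --- an a priori estimate that presumes a solution already regular enough to define $\jump{\partial_\nu e_h}$, to solve the wave equation strongly, and to run Duhamel's formula --- never establishes, since a priori the solution of \eqref{eq:4.1} is only known to exist as a causal distribution.
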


The proof of Theorem \ref{th:4.1} will occupy the remainder of this section. The proof will never use that $X_h$ is finite dimensional. If we take $X_h=H^{-1/2}(\Gamma)$, \eqref{eq:4.8} gives a bound for the single layer acoustic operator that reproves  \cite[Theorem 3.1]{DomSaySB}.

Theorem \ref{th:4.1} will be proved for $\lambda \in \mathcal C^2_0(\mathbb R;H^{-1/2}(\Gamma))$. The extension to the general case follows by a simple density argument. Also, we will prove the results in a finite interval $[0,T]$, and bounds \eqref{eq:4.8}-\eqref{eq:4.9} will only be proved for $t=T$. Since $T$ is arbitrary, this is equivalent to having proved the results for any $t$.

Because of the finite speed of propagation of solutions to the wave equation, it is possible to understand (formally at the beginning) $u_h^{\mathrm G}$ as a solution of the following  wave propagation problem on a truncated domain with non-standard transmission conditions (see \cite{LalSay09, SaySB})
\begin{subequations}\label{eq:4.10}
\begin{alignat}{4}
u_h^{\mathrm G}(t) \in H^1_0(B_T) & \qquad & & 0 \le t\le T,\\
\ddot u_h^{\mathrm G}(t)=\Delta u_h^{\mathrm G}(t) & \qquad & & 0 \le t\le T,\\
\gamma u_h^{\mathrm G}(t)-(\mathcal V*\lambda)(t)\in X_h^\circ& \qquad & & 0 \le t\le T,\\
\jump{\partial_\nu u_h^{\mathrm G}(t)}\in X_h& \qquad & & 0 \le t\le T,\\
u_h^{\mathrm G}(0)=\dot u_h^{\mathrm G}(0)=0.
\end{alignat}
\end{subequations}
The set $X_h^\circ$ is the polar set or annihilator of $X_h$, i.e.,
\[
X_h^\circ:=\{\rho\in H^{1/2}(\Gamma)\,:\, \langle \mu_h,\rho\rangle_\Gamma =0 \quad\forall \mu_h \in X_h\}.
\]
If $u\in H^1_\Delta(B_T\setminus\Gamma)$, the condition $\jump{\partial_\nu u}\in X_h$ can be rewritten as a set of restrictions 
\[
\langle \jump{\partial_\nu u},\rho\rangle_\Gamma =0\qquad\forall \rho \in X_h^\circ,
\]
or equivalently, as
\begin{equation}\label{eq:4.11}
(\nabla u,\nabla v)_{B_T\setminus\Gamma}+(\Delta u,v)_{B_T\setminus\Gamma}=0 \qquad \forall v \in V_h^T
\end{equation}
where
\begin{equation}\label{eq:4.12}
V_h^T :=\{ v \in H^1_0(B_T)\,:\, \gamma v\in X_h^\circ\}.
\end{equation}

\begin{proposition}[Uniqueness]\label{prop:4.1b}
Problem \eqref{eq:4.10} has at most one solution
\begin{equation}
\label{eq:4.30}
u_h^{\mathrm G}\in \mathcal C^2([0,T];L^2(B_T))\cap \mathcal C^1([0,T];H^1_0(B_T))\cap \mathcal C([0,T];H^1_\Delta(B_T\setminus\Gamma)).
\end{equation}
If this solution exists and $\lambda_h^{\mathrm G}:=\jump{\partial_\nu u_h^{\mathrm G}}$, then $u_h^{\mathrm G}$ and $\lambda_h^{\mathrm G}$ coincide with the solution of \eqref{eq:4.1}-\eqref{eq:4.2} on the interval $[0,T]$.
\end{proposition}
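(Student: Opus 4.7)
I would view \eqref{eq:4.10} as a dynamical system on $B_T$ with non-standard transmission conditions across $\Gamma$ encoded by the dual pair $(X_h, X_h^\circ)$, and run a classical energy identity in which polar duality kills the interface contribution. Concretely, the difference $u$ of two solutions of class \eqref{eq:4.30} satisfies the homogeneous version of \eqref{eq:4.10} (with $\mathcal V*\lambda$ replaced by $0$). Pairing $\ddot u(t)=\Delta u(t)$ with $\dot u(t)$ and applying Green's identity separately on $\Omega^-$ and on $B_T\setminus\overline{\Omega^-}$ produces
\[
\smallfrac12\frac{\mathrm d}{\mathrm dt}\bigl(\|\dot u(t)\|_{B_T}^2+\|\nabla u(t)\|_{B_T\setminus\Gamma}^2\bigr)=-\langle\jump{\partial_\nu u(t)},\gamma\dot u(t)\rangle_\Gamma,
\]
the $\partial B_T$ boundary integral vanishing because $\dot u(t)\in H^1_0(B_T)$. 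The right-hand side is zero: $\jump{\partial_\nu u(t)}\in X_h$ by \eqref{eq:4.10d}, and $\gamma\dot u(t)\in X_h^\circ$ because the (homogeneous) condition \eqref{eq:4.10c} places $\gamma u(t)$ in the closed subspace $X_h^\circ$, which is preserved by time differentiation. Conservation of the energy together with zero initial data then yields $u\equiv 0$.

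\textbf{Equivalence with the integral equation.} Assuming existence of $u_h^{\mathrm G}$, I would set $\lambda_h^{\mathrm G}:=\jump{\partial_\nu u_h^{\mathrm G}}$ (which takes values in $X_h$ by \eqref{eq:4.10d}) and introduce the candidate potential $w:=\mathcal S*\lambda_h^{\mathrm G}$. By the properties of the single layer potential recalled in Section~\ref{sec:2}, $w$ lies in $H^1_\Delta(\mathbb R^d\setminus\Gamma)$ with $\jump{\gamma w}=0$ and $\jump{\partial_\nu w}=\lambda_h^{\mathrm G}$. Since $\Gamma\subset B_0$, finite speed of propagation gives $\mathrm{supp}\,w(t)\subset B_t\subset B_T$ for $t\le T$, so $w(t)\in H^1_0(B_T)$. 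The difference $e:=u_h^{\mathrm G}-w$ has both jumps zero across $\Gamma$, hence $e(t)\in H^1_\Delta(B_T)$ and solves the homogeneous wave equation on the entire ball $B_T$ with Dirichlet boundary condition on $\partial B_T$ and vanishing initial data. The interface-free version of the energy argument of the previous step forces $e\equiv 0$. Consequently $\gamma u_h^{\mathrm G}=\gamma w=\mathcal V*\lambda_h^{\mathrm G}$, and \eqref{eq:4.10c} rewrites as $\mathcal V*\lambda_h^{\mathrm G}-\mathcal V*\lambda\in X_h^\circ$, which is precisely \eqref{eq:4.1}.

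\textbf{Main obstacle.} The delicate point is the rigorous justification of the energy identity at the bare regularity \eqref{eq:4.30}: one needs the duality $\langle\jump{\partial_\nu u(t)},\gamma\dot u(t)\rangle_\Gamma$ to be meaningful (so $u(t)\in H^1_\Delta(B_T\setminus\Gamma)$ and $\dot u(t)\in H^1_0(B_T)$ are both essential), one must commute $\frac{\mathrm d}{\mathrm dt}$ with the trace and jump operators, and one has to carry out the integration by parts at the minimal smoothness allowed by $\mathcal C([0,T];H^1_\Delta(B_T\setminus\Gamma))$. A density argument against smoother test functions, or a reduction to a first order evolution in a product space in the spirit of $C_0$-semigroup theory, should handle this. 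A secondary but parallel subtlety is ensuring that $w=\mathcal S*\lambda_h^{\mathrm G}$ inherits enough regularity in time for the identification in the second step to close; this follows from the mapping properties of $\mathcal S$ and from the regularity of $\lambda_h^{\mathrm G}$ inherited from \eqref{eq:4.30}.
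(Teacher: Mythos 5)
Your uniqueness argument coincides with the paper's. The paper dispatches uniqueness in one sentence (``an elementary energy argument''), and what you wrote is that argument: test the homogeneous equation with $\dot u(t)$, apply Green's identity on both sides of $\Gamma$, and observe that the interface term $\langle\jump{\partial_\nu u(t)},\gamma\dot u(t)\rangle_\Gamma$ vanishes because $\jump{\partial_\nu u(t)}\in X_h$ while $\gamma\dot u(t)\in X_h^\circ$. At the regularity \eqref{eq:4.30} this is rigorous pointwise in time ($\dot u(t)\in H^1_0(B_T)$, $u(t)\in H^1_\Delta(B_T\setminus\Gamma)$, and $X_h^\circ$ is closed), so your worries in the ``main obstacle'' paragraph about this step are unfounded; the first half stands.

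The identification half, however, has a genuine gap. Your construction $w:=\mathcal S*\lambda_h^{\mathrm G}$ needs two things you do not supply. First, $\lambda_h^{\mathrm G}=\jump{\partial_\nu u_h^{\mathrm G}}$ is defined only on $[0,T]$, whereas $\mathcal S*{}$ is a time convolution acting on causal densities defined on all of $\mathbb R$; a causal extension past $T$ must be chosen (harmless by causality of the kernel, but it has to be addressed). Second, and decisively, the regularity \eqref{eq:4.30} yields only $\lambda_h^{\mathrm G}\in\mathcal C([0,T];H^{-1/2}(\Gamma))$, because the jump of the normal derivative is controlled by the $H^1_\Delta$ norm, in which $u_h^{\mathrm G}$ is merely continuous in time. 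For a density that is only $\mathcal C^0$, the potential $\mathcal S*\lambda_h^{\mathrm G}$ exists only as a causal \emph{distribution}; the strong regularity $\mathcal C^2([0,T];L^2)\cap\mathcal C^1([0,T];H^1)\cap\mathcal C([0,T];H^1_\Delta)$ that your second energy argument (for $e=u_h^{\mathrm G}-w$) requires is guaranteed only for densities in $\mathcal W^2_0$ --- compare \eqref{eq:4.18}, which the paper invokes precisely under the hypothesis $\lambda\in\mathcal C^2_0$. So your claim that the needed regularity of $w$ ``follows from the mapping properties of $\mathcal S$'' is off by two time derivatives, and the step ``the interface-free version of the energy argument forces $e\equiv 0$'' is applied to a function that does not have the regularity that argument needs. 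This is exactly why the paper does not argue this way: its proof of the identification extends the strong solution of \eqref{eq:4.10} in space (from $B_T$ to $\mathbb R^d$) and in time (causally to $\mathbb R$) and matches the extension against the weak, distributional solution of \eqref{eq:4.1}--\eqref{eq:4.2}, outsourcing the vector-valued-distribution bookkeeping to \cite[Sections 4 \& 5]{DomSaySB}. Your plan could be repaired --- for instance, integrate $\lambda_h^{\mathrm G}$ twice in time so as to land in $\mathcal W^2_0$, run the comparison for the integrated quantities, and differentiate back in the sense of distributions; or replace the strong energy identity for $e$ by a uniqueness theorem for weak solutions of the Dirichlet problem for the wave equation in $B_T$ --- but as written the second step does not close.
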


\begin{proof} Uniqueness of solution follows from an elementary energy argument. Careful, but not complicated, use of extension operators from $B_T$ to $\mathbb R^d$ and from $[0,\infty)$ to $\mathbb R$ in the sense of vector valued distributions, can be used following 
\cite[Sections 4 \& 5]{DomSaySB}, to prove the relation between a strong solution of the transmission problem and a weak distributional solution of \eqref{eq:4.1}-\eqref{eq:4.2}.
\end{proof} 

\subsection{An underlying initial value problem}\label{sec:4.3}

Associated to the space $V_h^T$ given in \eqref{eq:4.12}, we consider the space
\begin{eqnarray}\label{eq:4.13}
D_h^T &:=& \{ v \in V_h^T \,:\, \Delta v \in L^2(B_T\setminus\Gamma), \jump{\partial_\nu v}\in X_h\}\\
\nonumber
&=& \{ v \in V_h^T \cap H^1_\Delta (B_T\setminus\Gamma)\,:\, (\nabla v,\nabla w)_{B_T}+(\Delta v,w)_{B_T\setminus\Gamma}=0\quad\forall w\in V_h^T\}
\end{eqnarray}
(both definitions coincide by \eqref{eq:4.11}). In the frame of the triple $D_h^T \subset V_h^T\subset L^2(B_T)$ we can consider the unbounded operator $\Delta:D_h^T\subset L^2(B_T) \to L^2(B_T)$ (the distributional Laplacian in $B_T\setminus\Gamma)$) and the initial value problem
\begin{equation}\label{eq:4.14}
\ddot v(t)=\Delta v(t)+f(t) \quad \forall t\ge 0, \qquad v(0)=\dot v(0)=0.
\end{equation}
{\em Strong solutions} of this problem will be those with values in $D_h^T$. Following \cite[Section 8 and Appendix]{DomSaySB}, we can also consider {\em weak solutions} of \eqref{eq:4.14}. In order to do that, we start in the Gelfand triple $V_h^T \subset L^2(B_T)\cong L^2(B_T)' \subset (V_h^T)'$, we extend $\Delta$ to an unbounded operator $\Delta: V_h^ T \subset (V_h^T)' \to (V_h^T)'$ and we only look for solutions with values in $V_h^T$. In this case, the additional boundary condition that appears in the definition of $D_h^T$ and the extended definition of the Laplace operator are part of the same expression: namely, the differential equation \eqref{eq:4.14} is to be understood as
\begin{equation}\label{eq:4.15}
(\ddot v(t),w)_{(V_h^T)'\times V_h^T}+(\nabla v(t),\nabla w)_{B_T}=(f(t),w)_{B_T} \qquad \forall w \in V_h^T.
\end{equation}
The following two results follow from \cite[Appendix]{DomSaySB}.

\begin{proposition}[Strong solutions]\label{prop:4.2}
If $f:[0,\infty)\to V_h^T$ is continuous, then there exists $v:[0,\infty)\to D_h^T$ such that
\begin{equation}\label{eq:4.15a}
v \in \mathcal C^2([0,\infty);L^2(B_T)) \cap \mathcal C^1([0,\infty);H^1_0(B_T))\cap \mathcal C([0,\infty);H^1_\Delta(B_T\setminus\Gamma)),
\end{equation}
satisfying \eqref{eq:4.14} and the bounds
\begin{eqnarray*}
\| v(t)\|_{B_T} & \le & C_T \int_0^ t \| f(\tau)\|_{B_T}\mathrm d\tau,\\
\|\nabla v(t)\|_{B_T} & \le &  \int_0^ t \| f(\tau)\|_{B_T}\mathrm d\tau,\\
\| \Delta v(t)\|_{B_T\setminus\Gamma} & \le & \int_0^ t \| \nabla f(\tau)\|_{B_T}\mathrm d\tau.
\end{eqnarray*}
\end{proposition}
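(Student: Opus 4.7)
The plan is to recast \eqref{eq:4.14} as a first-order abstract Cauchy problem in an energy Hilbert space and invoke the theory of $C_0$-groups of isometries that underlies \cite{DomSaySB}. I would work on the space $H := V_h^T \times L^2(B_T)$ with the energy inner product $\langle (v_1,w_1),(v_2,w_2)\rangle_H := (\nabla v_1,\nabla v_2)_{B_T} + (w_1,w_2)_{B_T}$, which is a genuine inner product because \eqref{eq:4.3} makes $\|\nabla\cdot\|_{B_T}$ a norm on $V_h^T \subset H^1_0(B_T)$. Setting $A(v,w):=(w,\Delta v)$ on $D(A):=D_h^T\times V_h^T$ and writing $U(t):=(v(t),\dot v(t))$, $F(t):=(0,f(t))$, the problem becomes $\dot U = AU+F$ with $U(0)=0$.

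The key step is to show that $A$ generates a $C_0$-group of unitary operators on $H$; by Stone's theorem it suffices to prove $A$ skew-adjoint. Skew-symmetry I would read off from Green's identity on $B_T\setminus\Gamma$: for $(v,w)\in D(A)$,
\[
\langle A(v,w),(v,w)\rangle_H = (\nabla w,\nabla v)_{B_T} + (\Delta v,w)_{B_T\setminus\Gamma} = -\langle \jump{\partial_\nu v},\gamma w\rangle_\Gamma = 0,
\]
because $\jump{\partial_\nu v}\in X_h$ is tested against $\gamma w\in X_h^\circ$ while $w$ vanishes on $\partial B_T$. The range condition $R(I-A)=H$ reduces, after eliminating $w=v-v_0$, to solving the coercive variational problem $(\nabla v,\nabla\phi)_{B_T}+(v,\phi)_{B_T}=(v_0+w_0,\phi)_{B_T}$ for $v\in V_h^T$, which is handled by Lax-Milgram on the closed subspace $V_h^T\subset H^1_0(B_T)$; interpreting the Euler equation against test functions in $\mathcal C^\infty_c(B_T\setminus\Gamma)$ delivers $\Delta v\in L^2(B_T\setminus\Gamma)$, and testing against general $\phi\in V_h^T$ combined with the bipolar identity $(X_h^\circ)^\circ = X_h$ places $v$ in $D_h^T$.

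Once $\{e^{tA}\}_{t\in\mathbb R}$ is a $C_0$-group of isometries, I would take $U(t):=\int_0^t e^{(t-\tau)A}F(\tau)\,\mathrm d\tau$; since $f\in\mathcal C([0,\infty);V_h^T)$ implies $F\in\mathcal C([0,\infty);D(A))$, standard semigroup regularity promotes this mild solution to a strong one in $\mathcal C^1([0,\infty);H)\cap\mathcal C([0,\infty);D(A))$, which unpacks to \eqref{eq:4.15a} for $v$ and to \eqref{eq:4.14} in $L^2(B_T)$. The three bounds fall out of unitarity: $\|U(t)\|_H\le\int_0^t\|f(\tau)\|_{B_T}\,\mathrm d\tau$ yields the bounds on $\|\nabla v(t)\|$ and on $\|v(t)\|$ (after applying \eqref{eq:4.3}), while commuting the closed operator $A$ through the integral (valid because $F(\tau)\in D(A)$ and $AF$ is continuous) produces $\|AU(t)\|_H\le\int_0^t\|AF(\tau)\|_H\,\mathrm d\tau$, where $\|AF(\tau)\|_H=\|\nabla f(\tau)\|_{B_T}$ and $\|AU(t)\|_H^2 \ge \|\Delta v(t)\|_{B_T\setminus\Gamma}^2$. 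The main obstacle is the skew-adjointness step, and specifically the way the polar-set duality $X_h\leftrightarrow X_h^\circ$ is precisely what cancels the boundary integrals in Green's identity on both sides of the inner product; once this exotic transmission structure is verified to fit into the Stone framework, the remainder is routine semigroup manipulation.
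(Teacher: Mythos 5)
Your proof is correct and is essentially the argument the paper relies on: the paper does not prove Proposition \ref{prop:4.2} itself but states that it "follows from \cite[Appendix]{DomSaySB}", whose machinery is exactly the $C_0$-group-of-isometries framework (via \cite{Paz83}) that you reconstruct — first-order reduction on the energy space $V_h^T\times L^2(B_T)$, skew-adjointness through the $X_h$/$X_h^\circ$ cancellation in Green's identity, Stone's theorem, and Duhamel plus closedness of $A$ for the three bounds. The only points left implicit (density of $D(A)$, which follows from dissipativity of $\pm A$ plus the range conditions in a reflexive space, and checking $R(I+A)=H$ alongside $R(I-A)=H$) are standard and do not affect correctness.
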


\begin{proposition}[Weak solutions]\label{prop:4.3}
If $f:[0,\infty)\to L^2(B_T)$ is continuous, then there exists $v:[0,\infty) \to V_h^T$ such that
\[
v \in \mathcal C^2([0,\infty);(V_h^T)')\cap \mathcal C^1([0,\infty);L^2(B_T))\cap \mathcal C([0,\infty);H^1_0(B_T)),
\]
satisfying the weak form of \eqref{eq:4.14} (see \eqref{eq:4.15}) and the bounds
\[
C_T^{-1}\|v(t)\|_{B_T}\le \| \nabla v(t)\|_{B_T}\le \int_0^ t \| f(\tau)\|_{B_T}\mathrm d\tau. 
\]
Moreover, $\widetilde v(t):=\int_0^t v(\tau)\mathrm d\tau$ is a continuous $D_h^T$-valued function.
\end{proposition}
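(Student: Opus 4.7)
The plan is to reformulate the abstract second-order problem \eqref{eq:4.14} as a first-order system on the Hilbert space $\mathcal H:=V_h^T\times L^2(B_T)$ endowed with the energy inner product
\[
((v,w),(v',w'))_{\mathcal H}:=(\nabla v,\nabla v')_{B_T}+(w,w')_{B_T},
\]
and to apply the $C_0$-group theory of \cite{Paz83,DomSaySB}. The unbounded operator $\mathcal A(v,w):=(w,\Delta v)$, with domain $\mathcal D(\mathcal A):=D_h^T\times V_h^T$, is skew-symmetric: a direct integration by parts using \eqref{eq:4.11} yields $(\mathcal AU,U')_{\mathcal H}+(U,\mathcal AU')_{\mathcal H}=0$ for all $U,U'\in \mathcal D(\mathcal A)$. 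Maximality (surjectivity of $\mathcal A-I$) reduces, after eliminating $w=v+v'$, to the coercive variational problem of finding $v\in V_h^T$ with $(\nabla v,\nabla\eta)_{B_T}+(v,\eta)_{B_T}=-(v'+w',\eta)_{B_T}$ for every $\eta\in V_h^T$; testing against $\eta$ compactly supported in $B_T\setminus\Gamma$ identifies $\Delta v=v+v'+w'\in L^2(B_T\setminus\Gamma)$, and reinsertion recovers \eqref{eq:4.11}, placing $v$ in $D_h^T$. Stone's theorem then furnishes a $C_0$-group of isometries $\{\mathcal T(t)\}_{t\in\mathbb R}$ generated by $\mathcal A$.

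For data $f\in \mathcal C([0,\infty);L^2(B_T))$, the mild solution $U(t):=\int_0^t\mathcal T(t-s)(0,f(s))\mathrm ds$ of $\dot U=\mathcal AU+(0,f)$, $U(0)=0$, belongs to $\mathcal C([0,\infty);\mathcal H)$ with $\|U(t)\|_{\mathcal H}\le\int_0^t\|f(\tau)\|_{B_T}\mathrm d\tau$ by the isometric property of $\mathcal T$. Writing $U=(v,w)$ yields $v\in\mathcal C([0,\infty);V_h^T)$ together with the energy bound $\|\nabla v(t)\|_{B_T}\le\int_0^t\|f(\tau)\|_{B_T}\mathrm d\tau$; the Poincar\'e-type inequality \eqref{eq:4.3} produces the $L^2$ bound with constant $C_T$. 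To identify $w=\dot v$ and obtain the weak form \eqref{eq:4.15}, approximate $f$ by $f_n\in\mathcal C^1([0,\infty);V_h^T)$ in $\mathcal C([0,T];L^2(B_T))$, apply Proposition \ref{prop:4.2} to produce strong solutions $v_n$, observe that $(v_n,\dot v_n)$ is the mild solution for data $(0,f_n)$, and pass to the limit using the isometry of $\mathcal T$. Equation \eqref{eq:4.15} then reads $\ddot v=\Delta v+f$ in $(V_h^T)'$, so $\ddot v\in\mathcal C([0,\infty);(V_h^T)')$.

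The improved regularity of $\widetilde v(t):=\int_0^t v(\tau)\mathrm d\tau$ follows by integrating \eqref{eq:4.15} once in time, using $v(0)=\dot v(0)=0$: with $F(t):=\int_0^t f(\tau)\mathrm d\tau$,
\[
(\dot v(t),\eta)_{B_T}+(\nabla\widetilde v(t),\nabla\eta)_{B_T}=(F(t),\eta)_{B_T}\qquad\forall\eta\in V_h^T.
\]
Testing with $\eta$ supported in $B_T\setminus\Gamma$ identifies $\Delta\widetilde v=\dot v-F\in L^2(B_T\setminus\Gamma)$ in the distributional sense. Green's formula then recasts the identity as $\langle\jump{\partial_\nu\widetilde v},\gamma\eta\rangle_\Gamma=0$ for every $\eta\in V_h^T$; since the interior trace surjects $H^1_0(B_T)$ onto $H^{1/2}(\Gamma)$ and thereby $V_h^T$ onto $X_h^\circ$, and $X_h$ is finite-dimensional (hence closed), $\jump{\partial_\nu\widetilde v}\in(X_h^\circ)^\circ=X_h$. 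Hence $\widetilde v(t)\in D_h^T$ pointwise, and continuity as a $D_h^T$-valued map follows from $\widetilde v\in\mathcal C^1([0,\infty);V_h^T)$ combined with the $L^2(B_T)$-continuity of $\Delta\widetilde v=\dot v-F$.

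The principal obstacle is reconciling the non-standard transmission condition $\jump{\partial_\nu v}\in X_h$ defining $D_h^T$ with the purely variational identity \eqref{eq:4.11} built into the test space $V_h^T$. This encoding is required twice: to establish skew-adjointness (as opposed to mere skew-symmetry) of $\mathcal A$ via the resolvent problem, and to recover the jump condition on $\widetilde v$ at the last step. Once this correspondence is fixed, the remainder is a routine semigroup-plus-density argument.
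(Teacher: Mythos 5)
Your proof is correct and follows essentially the same route as the paper, which does not prove Proposition \ref{prop:4.3} directly but outsources it to the Appendix of \cite{DomSaySB} --- i.e., exactly the reformulation as a first-order system and the theory of $C_0$-groups of isometries \cite{Paz83} that you carry out (and your derivation of the $D_h^T$-regularity of $\widetilde v$ just re-proves the equivalence of the two definitions of $D_h^T$ already recorded in \eqref{eq:4.13}). The only elided points --- surjectivity of $\mathcal A+I$, not merely $\mathcal A-I$, and density of $D_h^T\times V_h^T$ in $V_h^T\times L^2(B_T)$, both needed to invoke Stone's theorem --- are standard, following from the same Lax--Milgram argument with the sign flipped and from maximal monotonicity of $\pm\mathcal A$, respectively.
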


\subsection{A decomposition of $u_h^{\mathrm G}$}

As explained in Section \ref{sec:4.2}, we are going to deal with $\lambda\in \mathcal C^2([0,T];H^{-1/2}(\Gamma))$ such that $\lambda(0)=\dot\lambda(0)=0$ and obtain bounds in the interval $[0,T]$ as well.
The proof of this result follows from the decomposition
\begin{equation}\label{eq:4.15b}
u_h^{\mathrm G}=\mathcal S*\lambda + u_h^0+v_h^0,
\end{equation}
where $u_h^0:[0,\infty)\to V_h^T$ solves the steady-state transmission problems (for all $t\ge 0$)
\begin{equation}\label{eq:4.16}
u_h^ 0(t)\in V_h^T, \qquad -\Delta u_h^0(t)+u_h^0(t)=0 \quad\mbox{in $B_T\setminus\Gamma$},\qquad \jump{\partial_\nu u_h^0(t)}+\lambda(t)\in X_h, 
\end{equation}
and $v_h^0:[0,\infty) \to D_h^T$ is a solution of the evolution problem
\begin{equation}\label{eq:4.17}
\begin{array}{l}
\ddot v_h^0(t)=\Delta v_h^0(t)+u_h^0(t)-\ddot u_h^0(t)=\Delta v_h^0(t)+\Delta u_h^0(t)-\ddot u_h^0(t)\qquad t\ge 0,\\[1.5ex]
v_h^0(0)=\dot v_h^0(0)=0.
\end{array}
\end{equation}
We start by analyzing the three terms in \eqref{eq:4.15b} one by one. Note that we still need to show that the decomposition \eqref{eq:4.15b} holds true, that is, that the sum of the three functions in the right hand side of \eqref{eq:4.15b} is $u_h^{\mathrm G}$.

\paragraph{1.}  By Theorem 3.1 and Proposition 4.2 in \cite{DomSaySB}, it follows that
\begin{equation}\label{eq:4.18}
\mathcal S*\lambda \in \mathcal C^2([0,T];L^2(B_T))\cap \mathcal C^1([0,T];H^1_0(B_T))\cap \mathcal C([0,T];H^1_\Delta(B_T\setminus\Gamma)),
\end{equation}
that for all $t\in [0,T]$,
\begin{equation}\label{eq:4.18b}
\begin{array}{l}
\frac{\mathrm d^2}{\mathrm d t^2}(\mathcal S*\lambda)(t)=\Delta (\mathcal S*\lambda)(t), \qquad \jump{\partial_\nu(\mathcal S*\lambda)(t)}=\lambda(t),\\[1.5ex]
(\mathcal S*\lambda)(0)=\frac{\mathrm d}{\mathrm d t}(\mathcal S*\lambda)(0)=0,
\end{array}
\end{equation}
and
\begin{equation}\label{eq:4.A}
\| (\mathcal S*\lambda)(t)\|_{1,B_T}\le C_\Gamma \Big(\|\lambda(t)\|_{-1/2,\Gamma}+\sqrt{1+C_t^2}\, B_2^{-1/2}(\lambda,t)\Big) \qquad 0\le t\le T.
\end{equation}
\paragraph{2.} We next analyze the behavior of $u_h^0$. The variational formulation of \eqref{eq:4.16} is
\begin{equation}\label{eq:4.19}
u_h^0(t)\in V_h^T,\qquad (\nabla u_h^0(t),\nabla v)_{B_T}+(u_h^0(t),v)_{B_T}=-\langle \lambda(t),\gamma v\rangle_\Gamma\quad \forall v\in V_h^T.
\end{equation}
This is a well posed problem, that depends on $t$, only because data depend on $t$. In particular, 
\begin{equation}\label{eq:4.20}
u_h^0\in \mathcal C^2([0,T];H^1_0(B_T)\cap H^1_\Delta(B_T\setminus\Gamma)), \qquad u_h^0(0)=\dot u_h^0(0)=0.
\end{equation}
The variational formulation \eqref{eq:4.19} and the trace inequality \eqref{eq:4.6} show that
\begin{equation}\label{eq:4.B}
\| u_h^0(t)\|_{1,B_T} \le C_\Gamma\| \lambda(t)\|_{-1/2,\Gamma}.
\end{equation}
On the other hand, since $\Delta u_h^0(t)=u_h^0(t)$, then \eqref{eq:4.7} implies that
\begin{equation}\label{eq:4.C}
\|\jump{\partial_\nu u_h^0(t)}\|_{-1/2,\Gamma}\le C_\nu \| u_h^0(t)\|_{1,B_T}\le C_\nu C_\Gamma \| \lambda(t)\|_{-1/2,\Gamma}.
\end{equation}
Differentiating \eqref{eq:4.A} twice with respect to $t$, it also follows that
\begin{equation}\label{eq:4.D}
\| \ddot u_h^0(t)\|_{1,B_T} \le C_\Gamma\| \ddot \lambda(t)\|_{-1/2,\Gamma}.
\end{equation}
\paragraph{3. } We can apply Proposition \ref{prop:4.2} to the solution of \eqref{eq:4.17}, taking $f:=u_h^0-\ddot u_h^0$. 
It then follows that
\begin{equation}\label{eq:4.W}
v_h^0 \in \mathcal C^2([0,\infty);L^2(B_T)) \cap \mathcal C^1([0,\infty);H^1_0(B_T))\cap \mathcal C([0,
\infty);H^1_\Delta(B_T\setminus\Gamma)).
\end{equation}
We also obtain the bounds
\begin{equation}\label{eq:4.E}
\| v_h^0(t)\|_{B_T}\le C_t \int_0^ t \| u_h^0(\tau)-\ddot u_h^0(\tau)\|_{B_T}\mathrm d\tau \le C_TC_\Gamma B_2^{-1/2}(\lambda,t)
\end{equation}
(we have used \eqref{eq:4.B} and \eqref{eq:4.D} in the last step),
\begin{equation}\label{eq:4.F}
\| \nabla v_h^0(t)\|_{B_T}\le C_\Gamma  B_2^{-1/2}(\lambda,t)
\end{equation}
and 
\begin{equation}\label{eq:4.G}
\| \Delta v_h^0(t)\|_{B_T\setminus\Gamma}\le C_\Gamma  B_2^{-1/2}(\lambda,t).
\end{equation}
From the bound for the jump of the normal derivative \eqref{eq:4.7} and \eqref{eq:4.F}-\eqref{eq:4.G}, it follows that
\begin{equation}\label{eq:4.H}
\| \jump{\partial_\nu v_h^0(t)}\|_{-1/2,\Gamma}\le \sqrt2 C_\Gamma C_\nu B_2^{-1/2}(\lambda,t).
\end{equation}

\subsection{Proof of Theorem \ref{th:4.1}}

Let now $u_h^{\mathrm G}$ be defined by \eqref{eq:4.15b}. To prove that $u_h^{\mathrm G}$ satisfies \eqref{eq:4.30}, we just have to use  \eqref{eq:4.18}, \eqref{eq:4.20} and \eqref{eq:4.W}. To prove that  $u_h^{\mathrm G}$ satisfies problem \eqref{eq:4.10}, we add the equations that are satisfied by the three components of the sum, namely \eqref{eq:4.18b}, \eqref{eq:4.16} and \eqref{eq:4.17}. The uniqueness result of Proposition \ref{prop:4.1b} shows then that the function defined by \eqref{eq:4.15b} can be identified with the solution of \eqref{eq:4.1}-\eqref{eq:4.2} in the interval $[0,T]$. 

By \eqref{eq:4.A}, \eqref{eq:4.B}, \eqref{eq:4.E} and \eqref{eq:4.F}, it follows that
\begin{equation}\label{eq:4.31}
\| u_h^{\mathrm G}(t)\|_{1,B_T} \le  2 C_\Gamma \|\lambda(t)\|_{-1/2,\Gamma}+ C_\Gamma\Big(\sqrt{1+C_t^2}+\sqrt{1+C_T^2}\Big)B_2^{-1/2}(\lambda,t).
\end{equation}
Taking $t=T$ and using that $T$ is arbitrary, \eqref{eq:4.8} follows. 

Since $\lambda_h^{\mathrm G}(t)=\jump{\partial_\nu u_h^{\mathrm G}(t)}=\lambda(t)+\jump{\partial_\nu u_h^0(t)}+\jump{\partial_\nu v_h^0(t)}$, and $\jump{\partial_\nu\cdot}:H^1_\Delta(B_T\setminus\Gamma)\to H^{-1/2}(\Gamma)$ is bounded, then \eqref{eq:4.20} and \eqref{eq:4.W} imply that $\lambda_h^{\mathrm G}\in \mathcal C([0,T],H^{-1/2}(\Gamma))$. The uniqueness argument of Proposition \ref{prop:4.1b} then proves that the solution of \eqref{eq:4.1} is a causal continuous $H^{-1/2}(\Gamma)$-valued function. Finally, inequalities \eqref{eq:4.C} and \eqref{eq:4.H} prove \eqref{eq:4.9}. 

\section{The semidiscrete Galerkin solver}\label{sec:5}

In this section we study how Galerkin semidiscretization depends on data. Our starting point is a causal function $\varphi:\mathbb R\to H^{1/2}(\Gamma)$. We then consider the function $\lambda_h^\varphi:\mathbb R\to X_h$ such that
\begin{equation}\label{eq:5.1}
\langle \mu_h,(\mathcal V*\lambda_h^\varphi)(t)\rangle_\Gamma=\langle \mu_h,\smallfrac12 \varphi(t)-(\mathcal K*\varphi)(t)\rangle_\Gamma\qquad \forall \mu_h \in X_h
\end{equation}
and the associated exterior solution
\begin{equation}\label{eq:5.2}
u_h^\varphi:=-\mathcal S*\lambda_h^\varphi-\mathcal D*\varphi.
\end{equation}
 Using a simple Laplace transform argument and the estimates of \cite{BamHaD86a} (see also \cite{LalSay09}), it is easy to prove that \eqref{eq:5.1} has at most one continuous causal solution. Moreover, uniqueness can be also established for weaker solutions, where for instance, $\lambda_h^\varphi$ is the distributional derivative of a continuous causal $X_h$-valued function.

\subsection{Estimates for the Galerkin solver}

\begin{theorem}\label{th:5.1}
Let $\varphi\in \mathcal W^4_0(\mathbb R; H^{1/2}(\Gamma))$. Then the solution of the semidiscrete problem \eqref{eq:5.1} and its associated potential \eqref{eq:5.2} satisfy
\[
\lambda_h^{\varphi}\in \mathcal C(\mathbb R;H^{-1/2}(\Gamma)), \qquad u_h^{\varphi}\in \mathcal C^1(\mathbb R;H^1(\mathbb R^d\setminus\Gamma)).
\]
Moreover, for all $t\ge 0$,
\begin{eqnarray}\label{eq:5.5}
\| u_h^\varphi(t)\|_{1,\mathbb R^d\setminus\Gamma} & \le & C_L \Big(\| \varphi(t)\|_{1/2,\Gamma}+\sqrt{1+C_t^2}\,B_2^{1/2}(\varphi,t)\Big),\\
\label{eq:5.6}
\| \lambda_h^\varphi(t)\|_{-1/2,\Gamma} & \le & \sqrt2 C_\nu C_L \Big(4\|\varphi(t)\|_{1/2,\Gamma}+2\|\ddot\varphi(t)\|_{1/2,\Gamma}+ B_4^{1/2}(\varphi,t)\Big),
\end{eqnarray}
where
\begin{eqnarray*}
B_2^{1/2}(\varphi,t)&:=&\int_0^t \Big(\|\varphi(\tau)\|_{1/2,\Gamma}+\|\ddot\varphi(\tau)\|_{1/2,\Gamma}\Big)\mathrm d\tau,\\
B_4^{1/2}(\varphi,t)&:=& 4B_2^{1/2}(\varphi,t)+B_2^{1/2}(\ddot\varphi,t).
\end{eqnarray*}
\end{theorem}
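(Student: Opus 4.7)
The plan is to mirror the strategy of Section~\ref{sec:4}: first identify $u_h^\varphi$ with the unique solution of an exotic transmission problem on $B_T\setminus\Gamma$, and then build it by a three--term decomposition whose pieces admit explicit bounds from the elliptic lifting $L$, an auxiliary Ritz projection, and the wave--equation solver of Proposition~\ref{prop:4.3}. The reformulation would read: find causal $u_h^\varphi$ satisfying $\ddot u_h^\varphi=\Delta u_h^\varphi$ in $B_T\setminus\Gamma$, $u_h^\varphi|_{\partial B_T}=0$, $\jump{\gamma u_h^\varphi}=\varphi$, $\gamma^+u_h^\varphi\in X_h^\circ$, $\jump{\partial_\nu u_h^\varphi}\in X_h$, with zero initial conditions. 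Uniqueness and equivalence with \eqref{eq:5.1}--\eqref{eq:5.2} follow from the same energy/extension--operator arguments as Proposition~\ref{prop:4.1b}.

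The core of the proof is the splitting
\[
u_h^\varphi=-L\varphi+u_h^0+v_h^0,
\]
where $L$ is the one--sided lifting from \eqref{eq:4.5} (which carries the trace jump), $u_h^0(t)\in V_h^T$ is the Ritz projection of $L\varphi(t)$ relative to $a(u,v):=(\nabla u,\nabla v)_{B_T}+(u,v)_{B_T}$, i.e.\
\[
a(u_h^0(t),v)=a(L\varphi(t),v)\qquad \forall v\in V_h^T,
\]
and $v_h^0(t)\in V_h^T$ is the weak solution (Proposition~\ref{prop:4.3}) of the wave equation with forcing $f:=L\ddot\varphi-\ddot u_h^0+u_h^0-L\varphi\in L^2(B_T)$ and zero initial data. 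The decisive feature of this design is that choosing $u_h^0$ as the $a$--Ritz projection cancels the $(\nabla L\varphi,\nabla v)$ term that would otherwise appear on the right--hand side of the weak wave equation for the correction, leaving forcing purely in $L^2$, which is exactly what Proposition~\ref{prop:4.3} accepts. After verifying that the sum satisfies the transmission problem (trace jump, $X_h^\circ$--orthogonality of $\gamma^+$, $X_h$--membership of the normal jump through testing with $v\in V_h^T$, zero initial conditions) and invoking uniqueness, \eqref{eq:5.5} follows by combining $\|L\varphi\|_{1,B_T\setminus\Gamma}\le C_L\|\varphi\|_{1/2,\Gamma}$, norm--reduction of the Ritz projection, and the bound of Proposition~\ref{prop:4.3} applied with $\|f(\tau)\|_{B_T}\le 2C_L(\|\varphi(\tau)\|_{1/2,\Gamma}+\|\ddot\varphi(\tau)\|_{1/2,\Gamma})$.

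For \eqref{eq:5.6} I would exploit that $u_h^\varphi$ satisfies the wave equation, hence $\Delta u_h^\varphi=\ddot u_h^\varphi\in L^2(B_T\setminus\Gamma)$, so $u_h^\varphi(t)\in H^1_\Delta(B_T\setminus\Gamma)$ and \eqref{eq:4.7} delivers
\[
\|\lambda_h^\varphi(t)\|_{-1/2,\Gamma}=\|\jump{\partial_\nu u_h^\varphi(t)}\|_{-1/2,\Gamma}\le \sqrt2\,C_\nu\bigl(\|\nabla u_h^\varphi(t)\|_{B_T}+\|\ddot u_h^\varphi(t)\|_{B_T}\bigr).
\]
The gradient term is controlled directly from the decomposition (the Poincar\'e constant $C_t$ is needed only for the $L^2$ part of the $H^1$ norm, not for the gradient itself). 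For $\|\ddot u_h^\varphi(t)\|_{B_T}$ I would use linearity and causality of \eqref{eq:5.1} to identify $\ddot u_h^\varphi$ with the Galerkin solver applied to $\ddot\varphi$, then reapply the already--proven \eqref{eq:5.5} with $\ddot\varphi$ in place of $\varphi$; this is why the hypothesis $\varphi\in\mathcal{W}^4_0$ is needed (so that $\ddot\varphi$ has two further derivatives available in $L^1_{\mathrm{loc}}$), and why the rearrangement $B_4^{1/2}(\varphi,t)=4B_2^{1/2}(\varphi,t)+B_2^{1/2}(\ddot\varphi,t)$ emerges naturally when the two contributions are assembled. The main obstacle I foresee is carrying out the construction rigorously: since $L\varphi$ has no classical Laplacian, the verification that $-L\varphi+u_h^0+v_h^0$ satisfies the transmission problem must be done strictly in the weak form tested against $V_h^T$, relying on the Ritz identity to cancel the problematic $(\nabla L\varphi,\nabla v)$ contribution before Proposition~\ref{prop:4.3} can be brought in. A secondary but nontrivial bookkeeping point is tracking constants tightly enough to recover the specific numerical coefficients $4$ and $2$ in \eqref{eq:5.6}.
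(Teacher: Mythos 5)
Your reformulation, uniqueness argument, and lifting/Ritz construction are sound, and in fact your splitting $u_h^\varphi=-L\varphi+u_h^0+v_h^0$ is the paper's first decomposition \eqref{eq:5.7}--\eqref{eq:5.9} in disguise: the solution $u_h^0$ of \eqref{eq:5.8} is exactly $L\varphi$ minus its Ritz projection onto $V_h^T$ for the form $a(u,v)=(\nabla u,\nabla v)_{B_T\setminus\Gamma}+(u,v)_{B_T}$, so your route does deliver \eqref{eq:5.5} (modulo a sign relative to the paper's convention $\jump{\gamma L\varphi}=\varphi$, and provided you keep the Ritz-\emph{error} form of the estimates rather than the triangle inequality, so as not to double $C_L$).

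The genuine gap is in your route to \eqref{eq:5.6}. Writing $\Delta u_h^\varphi(t)=\ddot u_h^\varphi(t)=u_h^{\ddot\varphi}(t)$ and re-applying \eqref{eq:5.5} with $\ddot\varphi$ in place of $\varphi$ bounds $\|\Delta u_h^\varphi(t)\|_{B_T\setminus\Gamma}$ by $C_L\bigl(\|\ddot\varphi(t)\|_{1/2,\Gamma}+\sqrt{1+C_t^2}\,B_2^{1/2}(\ddot\varphi,t)\bigr)$, i.e.\ with the factor $\sqrt{1+C_t^2}$, which grows linearly in $t$. This is strictly weaker than \eqref{eq:5.6}, whose constant $\sqrt2\,C_\nu C_L$ is uniform in time, and the discrepancy is not the ``bookkeeping'' you flag as secondary: in your framework the wave correction is only a \emph{weak} solution (Proposition \ref{prop:4.3}), so its $L^2(B_T)$ norm is controlled only through the Poincar\'e constant $C_T$ (or through $\|v(t)\|_{B_T}\le\int_0^t\|\dot v\|_{B_T}$, which grows in $t$ as well); a linear-in-$t$ loss is intrinsic to estimating the $L^2$ norm of a wave solution by $\int_0^t\|f\|$, and $\|\ddot u_h^\varphi(t)\|_{B_T}$ is exactly such a quantity when reached via \eqref{eq:5.5}. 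The paper removes this loss with a \emph{second} decomposition \eqref{eq:5.20}--\eqref{eq:5.22}, which is the idea missing from your proposal: the stationary part $u_h^1$ is modified by the extra source $(L\ddot\varphi(t)-L\varphi(t),v)_{B_T}$ precisely so that the forcing $f=\Delta u_h^1-\ddot u_h^1$ of the evolution part takes values in $V_h^T$; then Proposition \ref{prop:4.2} (strong solutions) applies and gives $\|\Delta v_h^1(t)\|_{B_T\setminus\Gamma}\le\int_0^t\|\nabla f(\tau)\|_{B_T}\,\mathrm d\tau$, an estimate involving no Poincar\'e constant, which together with \eqref{eq:5.23}--\eqref{eq:5.24} produces the time-uniform coefficients $4$, $2$ and $B_4^{1/2}$ in \eqref{eq:5.6}. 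Since sharp time growth of constants is precisely what this time-domain analysis is designed to capture, your weaker version of \eqref{eq:5.6} would propagate an extra factor $(1+t)$ into Theorem \ref{th:3.1} and into the fully discrete estimates of Theorems \ref{th:6.4}, \ref{th:6.7} and \ref{th:6.9}.
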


The proof of this result follows partially the steps of the proof of Theorem \ref{th:4.1}. The analysis will be more involved because of the occurrence of a non-homogeneous essential transmission condition (see \eqref{eq:5.3d} below), that cannot be easily lifted with a continuous potential. Like in Section \ref{sec:4}, we will prove the estimates on a fixed time interval $[0,T]$, taking advantage of the fact that finite speed of propagation will allow us to impose a homogeneous Dirichlet boundary condition in $\partial B_T$. Again, we will only assume that $\varphi\in \mathcal C^4([0,T];H^{1/2}(\Gamma))$ with $\varphi^{(k)}(0)=0$ for $k\le 3$. The result for general $\varphi$ can be extended with a density argument.

One of the keys towards the proof of the result lies in the fact if $u_h^\varphi$ is smooth enough, then $u_h^\varphi$ is a strong solution of the following problem:
\begin{subequations}\label{eq:5.3}
\begin{alignat}{4}
\label{eq:5.3a}
u_h^{\varphi}(t) \in H^1(B_T\setminus\Gamma) & \qquad & & 0 \le t\le T,\\
\label{eq:5.3b}
\gamma_Tu_h^{\varphi}(t)=0 & & & 0\le t\le T,\\
\label{eq:5.3c}
\ddot u_h^{\varphi}(t)=\Delta u_h^{\mathrm G}(t) & \qquad & & 0 \le t\le T,\\
\label{eq:5.3d}
\jump{\gamma u_h^{\varphi}(t)}=\varphi(t) & \qquad & & 0 \le t\le T,\\
\label{eq:5.3e}
\gamma^+ u_h^\varphi(t)\in X_h^\circ  & \qquad & & 0 \le t\le T,\\
\label{eq:5.3f}
\jump{\partial_\nu u_h^{\varphi}(t)}\in X_h& \qquad & & 0 \le t\le T,\\
\label{eq:5.3g}
u_h^{\varphi}(0)=\dot u_h^{\varphi}(0)=0.
\end{alignat}
\end{subequations}
At the same time, we can combine \eqref{eq:5.3c} and \eqref{eq:5.3f}, multiply by $w\in V_h^T$ and integrate, to obtain a weaker form of the differential equation and the natural boundary condition
\begin{equation}\label{eq:5.4}
(\ddot u_h^\varphi(t),w)_{B_T}+(\nabla u_h^\varphi(t),\nabla w)_{B_T\setminus\Gamma}=0 \qquad \forall w\in V_h^T, \qquad 0\le t\le T.
\end{equation}
If we now define
\begin{equation}\label{eq:5.4b}
w_h^\varphi(t):=\int_0^ t u_h^\varphi(\tau)\mathrm d\tau,
\end{equation}
it also follows that
\begin{equation}\label{eq:5.4c}
(\ddot w_h^\varphi(t),w)_{B_T}+(\nabla w_h^\varphi(t),\nabla w)_{B_T\setminus\Gamma}=0 \qquad \forall w\in V_h^T, \qquad 0\le t\le T.
\end{equation}
The following double uniqueness result will help us recognize $u_h^\varphi$ in the two decompositions that will be given below.

\begin{proposition}[Uniqueness]\label{prop:5.2} 
 There exists at most one 
\[
u_h^\varphi \in \mathcal C^1([0,T];L^2(B_T))\cap \mathcal C([0,T];H^1(B_T\setminus\Gamma))
\]
satisfying the essential boundary and transmission conditions \eqref{eq:5.3b}, \eqref{eq:5.3d}, and \eqref{eq:5.3e}, the initial conditions \eqref{eq:5.3g}, and such that $w_h^\varphi$ defined with \eqref{eq:5.4b} satisfies \eqref{eq:5.4c}. If such a solution exists and, in addition, $w_h^\varphi\in \mathcal C([0,T];H^1_\Delta(B_T\setminus\Gamma)$, then $\lambda_h^\varphi=-\frac{\mathrm d}{\mathrm dt}\jump{\partial_\nu w_h^\varphi}$ (with differentiation in the sense of $H^{-1/2}(\Gamma)$-valued distributions) solves \eqref{eq:5.1} and $u_h^\varphi$ satisfies \eqref{eq:5.2}.
\end{proposition}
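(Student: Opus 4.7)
The plan is to establish uniqueness by a standard energy argument applied to the difference of two candidates, and then to recover the boundary integral equation \eqref{eq:5.1} and the representation \eqref{eq:5.2} by extending the finite-domain solution causally to $\mathbb{R}^d$ and invoking Kirchhoff's formula.

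For the uniqueness step, I let $u$ denote the difference of two solutions and set $w(t) := \int_0^t u(\tau)\,\mathrm d\tau$. The homogeneous counterparts of \eqref{eq:5.3b}, \eqref{eq:5.3d}, and \eqref{eq:5.3e} place $u(t)$ in $V_h^T$: the vanishing jump $\jump{\gamma u}=0$ removes the interior discontinuity, $\gamma_T u=0$ gives $u \in H^1_0(B_T)$, and $\gamma u = \gamma^+ u \in X_h^\circ$. Since $\dot w(t) = u(t) \in V_h^T$, it is admissible as test function in the homogeneous version of \eqref{eq:5.4c}, giving
\[
(\ddot w(t), \dot w(t))_{B_T} + (\nabla w(t), \nabla \dot w(t))_{B_T \setminus \Gamma} = \tfrac12\tfrac{\mathrm d}{\mathrm dt}\Big(\|\dot w(t)\|_{B_T}^2 + \|\nabla w(t)\|_{B_T\setminus\Gamma}^2\Big) = 0.
\]
The hypothesized regularity of $u$ makes $\ddot w = \dot u$ a continuous $L^2(B_T)$-valued function and $\nabla \dot w = \nabla u$ a continuous $L^2(B_T\setminus\Gamma)$-valued function, so the identity above is legitimate. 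Combined with $w(0)=0$ and $\dot w(0)=u(0)=0$ inherited from \eqref{eq:5.3g}, it forces $u\equiv 0$ on $[0,T]$.

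For the second assertion, I use the extra regularity $w_h^\varphi \in \mathcal C([0,T]; H^1_\Delta(B_T\setminus\Gamma))$ and extend both $u_h^\varphi$ and $w_h^\varphi$ by zero on $\mathbb R^d\setminus\overline{B_T}$ to causal distributions on $\mathbb R$ with values in the appropriate function spaces. The boundary condition \eqref{eq:5.3b} and finite speed of propagation guarantee no spurious contribution at $\partial B_T$. Testing \eqref{eq:5.4c} against smooth compactly supported functions in $\mathbb R^d\setminus\Gamma$ (which belong to $V_h^T$ once their spatial support sits inside $B_T$) shows that the extended $u_h^\varphi$ satisfies the wave equation distributionally in $\mathbb R^d\setminus\Gamma$. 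The Dirichlet jump $\jump{\gamma u_h^\varphi}=\varphi$ comes directly from \eqref{eq:5.3d}; since $\dot w_h^\varphi = u_h^\varphi$, the definition $\lambda_h^\varphi := -\tfrac{\mathrm d}{\mathrm dt}\jump{\partial_\nu w_h^\varphi}$ encodes the Neumann jump $\jump{\partial_\nu u_h^\varphi} = -\lambda_h^\varphi$ in the sense of $H^{-1/2}(\Gamma)$-valued distributions. Kirchhoff's formula, recalled in Section \ref{sec:2}, then yields
\[
u_h^\varphi = \mathcal S*\jump{\partial_\nu u_h^\varphi} - \mathcal D*\jump{\gamma u_h^\varphi} = -\mathcal S*\lambda_h^\varphi - \mathcal D*\varphi,
\]
which is \eqref{eq:5.2}. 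Taking the exterior trace, using \eqref{eq:5.3e} together with \eqref{eq:2.7} and \eqref{eq:2.9b}, and testing against arbitrary $\mu_h \in X_h$ then produces \eqref{eq:5.1}.

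The main obstacle is the rigorous execution of the second step. The regularity gap between $u_h^\varphi$ and $w_h^\varphi$ is essential: $u_h^\varphi$ alone is not regular enough for $\jump{\partial_\nu u_h^\varphi}$ to exist pointwise in time, so the Neumann jump must be extracted from the smoother $w_h^\varphi$ and then differentiated distributionally in time. Justifying this manipulation, together with the causal extension to all of $\mathbb R^d$ and the identification of the finite-domain weak formulation with a distributional identity on $\mathbb R^d\setminus\Gamma$, is the technical heart of the proof and is handled by the vector-valued distribution machinery of \cite{DomSaySB, LalSay09, SaySB} already invoked in the proof of Proposition \ref{prop:4.1b}.
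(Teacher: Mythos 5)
Your proof is correct and follows essentially the same route as the paper: the uniqueness part is the identical energy argument (testing the homogeneous form of \eqref{eq:5.4c} with $\dot w = u \in V_h^T$ and using the initial conditions), and, like the paper, you ultimately defer the rigorous identification of the finite-domain solution with the solution of \eqref{eq:5.1}--\eqref{eq:5.2} to the vector-valued distribution arguments of \cite[Section 5]{DomSaySB}. The extra detail you give for that second step (zero extension, Kirchhoff's formula, trace identities) is a faithful sketch of what those outsourced arguments do, so nothing is missing.
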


\begin{proof}  Let $u_h^{\varphi=0}$ be a solution of \eqref{eq:5.3b},\eqref{eq:5.3d} with $\varphi=0$, \eqref{eq:5.3e} and \eqref{eq:5.3g} such that the corresponding $w_h^{\varphi=0}$ satisfies \eqref{eq:5.4c}. Then $u_h^{\varphi=0}:[0,T]\to V_h^T$ is continuous and we have enough regularity to test \eqref{eq:5.4c} with $u_h^{\varphi=0}(t)=\dot w_h^{\varphi=0}(t)\in V_h^T$ and prove that
\[
\frac{\mathrm d}{\mathrm d t}\Big( \| \dot w_h^{\varphi=0}(t)\|_{B_T}^2+ \| \nabla w_h^{\varphi=0}(t)\|_{B_T\setminus\Gamma}^2\Big)=0.
\]
The proof of uniqueness of solution is now straightforward. The final statement follows from the same arguments developed in \cite[Section 5]{DomSaySB}. 
\end{proof}

\begin{corollary}\label{prop:5.3}
There exists at most one 
\[
u_h^\varphi \in \mathcal C^2([0,T];L^2(B_T))\cap \mathcal C^1([0,T];H^1(B_T\setminus\Gamma))\cap \mathcal C([0,T];H^1_\Delta(B_T\setminus\Gamma))
\]
that solves \eqref{eq:5.3}. Moreover, if such a solution exists, then $\lambda_h^\varphi:=-\jump{\partial_\nu u_h^\varphi}$ solves  \eqref{eq:5.1} and  $u_h^\varphi$ satisfies \eqref{eq:5.2}.
\end{corollary}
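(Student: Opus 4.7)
The plan is to reduce this corollary to Proposition \ref{prop:5.2}. Any $u_h^\varphi$ in the regularity class of the corollary that solves the strong problem \eqref{eq:5.3} will automatically satisfy the weaker hypotheses of Proposition \ref{prop:5.2}, so both the uniqueness statement and the identification of $\lambda_h^\varphi$ will be inherited from that proposition.

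First I would verify the auxiliary regularity. Since $u_h^\varphi \in \mathcal C([0,T];H^1_\Delta(B_T\setminus\Gamma))$, its time antiderivative $w_h^\varphi(t):=\int_0^t u_h^\varphi(\tau)\mathrm d\tau$ is also continuous into $H^1_\Delta(B_T\setminus\Gamma)$; moreover, $w_h^\varphi(0)=0$ and $\dot w_h^\varphi(0)=u_h^\varphi(0)=0$. The essential boundary and transmission conditions \eqref{eq:5.3b}, \eqref{eq:5.3d}, \eqref{eq:5.3e}, and the initial conditions \eqref{eq:5.3g} are directly assumed; the only thing that remains is to show that $w_h^\varphi$ satisfies the weak equation \eqref{eq:5.4c}.

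The key step is the integration-by-parts that turns the strong PDE \eqref{eq:5.3c} plus the discrete jump condition \eqref{eq:5.3f} into the variational form \eqref{eq:5.4}. Testing \eqref{eq:5.3c} against an arbitrary $w\in V_h^T$ and applying Green's identity separately in $B_T\cap\Omega^-$ and $B_T\cap\Omega^+$ gives
\[
(\ddot u_h^\varphi(t),w)_{B_T}+(\nabla u_h^\varphi(t),\nabla w)_{B_T\setminus\Gamma}=-\langle\jump{\partial_\nu u_h^\varphi(t)},\gamma w\rangle_\Gamma,
\]
where the contribution from $\partial B_T$ vanishes because $w\in H^1_0(B_T)$. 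The right-hand side is zero by the duality pairing between $\jump{\partial_\nu u_h^\varphi(t)}\in X_h$ (granted by \eqref{eq:5.3f}) and $\gamma w\in X_h^\circ$ (granted by the definition of $V_h^T$). This is precisely \eqref{eq:5.4}; a straightforward time integration from $0$ to $t$, using $\dot u_h^\varphi(0)=0$, $\ddot w_h^\varphi=\dot u_h^\varphi$, and the commutation of time integration with the spatial bilinear forms, yields \eqref{eq:5.4c}.

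Proposition \ref{prop:5.2} now forces uniqueness of $u_h^\varphi$ and asserts that $-\frac{\mathrm d}{\mathrm dt}\jump{\partial_\nu w_h^\varphi}$ solves \eqref{eq:5.1} with representation \eqref{eq:5.2}. Since the jump map $\jump{\partial_\nu\cdot}:H^1_\Delta(B_T\setminus\Gamma)\to H^{-1/2}(\Gamma)$ is linear and continuous, it commutes with the Bochner integral, so $\jump{\partial_\nu w_h^\varphi(t)}=\int_0^t \jump{\partial_\nu u_h^\varphi(\tau)}\mathrm d\tau$, and therefore $-\frac{\mathrm d}{\mathrm dt}\jump{\partial_\nu w_h^\varphi}=-\jump{\partial_\nu u_h^\varphi}=\lambda_h^\varphi$, closing the argument. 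I do not anticipate any serious obstacle; the only piece that requires mild care is the commutation of the jump operator with time integration, which is however immediate from the continuity of the jump map on $H^1_\Delta(B_T\setminus\Gamma)$.
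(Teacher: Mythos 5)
Your proposal is correct and follows the paper's intended route: the paper itself derives the weak form \eqref{eq:5.4} and then \eqref{eq:5.4c} from \eqref{eq:5.3c} and \eqref{eq:5.3f} just before Proposition \ref{prop:5.2}, and the corollary is exactly the resulting reduction to that proposition, with the identification $-\jump{\partial_\nu u_h^\varphi}=-\frac{\mathrm d}{\mathrm dt}\jump{\partial_\nu w_h^\varphi}$ handled as you do. Your filled-in details (vanishing of the $\Gamma$ pairing by the $X_h$--$X_h^\circ$ duality, commutation of the jump map with the Bochner integral) are precisely the ones the paper leaves implicit.
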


\subsection{A first decomposition of $u_h^\varphi$}

For the arguments of this section, we only need $\varphi\in \mathcal C^2([0,T];H^{1/2}(\Gamma))$ with $\varphi(0)=\dot\varphi(0)=0$.
In a first step, we formally decompose
\begin{equation}
\label{eq:5.7}
u_h^\varphi= u_h^0+v_h^0,
\end{equation}
where $u_h^0:[0,T]\to H^1(B_T\setminus\Gamma)$ is the solution of the variational problems (for $0\le t\le T$)
\begin{equation}\label{eq:5.8}
\begin{array}{l}
u_h^0(t)\in H^1(B_T\setminus\Gamma),\\[1.5ex]
\gamma_T u_h^0(t)=0, \quad \jump{\gamma u_h^0(t)}=\varphi(t), \qquad \gamma^+ u_h^0(t)\in X_h^\circ,\\[1.5ex]
(\nabla u_h^0(t),\nabla v)_{B_T\setminus\Gamma}+(u_h^0(t),v)_{B_T}=0\qquad\forall v \in V_h^T,
\end{array}
\end{equation}
and $v_h^0: [0,T]\to V_h^T$ is a solution of
\begin{equation}\label{eq:5.9}
\begin{array}{l}
(\ddot v_h^0(t),w)_{(V_h^T)'\times V_h^T}+(\nabla v_h^0(t),\nabla w)_{B_T}\\
\hspace{4cm}=(u_h^0(t)-\ddot u_h^0(t),w)_{B_T} \quad \forall w\in V_h^T, \quad 0\le t\le T,\\[1.5ex]
v_h^0(0)=\dot v_h^0(0)=0.
\end{array}
\end{equation}
\paragraph{1.} Problem \eqref{eq:5.8} has a unique solution by a simple coercivity argument. Since dependence on $t$ happens only through the non-homogeneous essential transmission condition (compare with \eqref{eq:4.19}, where the condition was natural), it is simple to prove that
\begin{equation}\label{eq:5.10}
u_h^0\in \mathcal C^2([0,T];H^1(B_T\setminus\Gamma)), \qquad u_h^0(0)=\dot u_h^0(0)=0.
\end{equation}
Using the lifting operator $L$ given in \eqref{eq:4.5}-\eqref{eq:4.6} and taking $u_h^0(t)-L\varphi(t)\in V_h^T$ as a test function in \eqref{eq:5.8}, we can prove that
\begin{equation}\label{eq:5.A}
\|u_h^0(t)\|_{1,B_T\setminus\Gamma} \le C_L \|\varphi(t)\|_{1/2,\Gamma}.
\end{equation}
Taking second derivatives with respect to time in \eqref{eq:5.8} and the using same kind of argument, we prove
\begin{equation}\label{eq:5.B}
\|\ddot u_h^0(t)\|_{1,B_T\setminus\Gamma} \le C_L \|\ddot \varphi(t)\|_{1/2,\Gamma}.
\end{equation}
\paragraph{2.} Proposition \ref{prop:4.3} can now be invoked to prove that the evolution problem \eqref{eq:5.9} has a unique solution
\begin{equation}\label{eq:5.11}
v_h^0\in \mathcal C^1([0,T];H^1_0(B_T))\cap \mathcal C([0,T];L^2(B_T)),
\end{equation}
satisfying the bounds
\begin{equation}\label{eq:5.C}
C_T^{-1}\| v_h^0(t)\|_{B_T}\le \|\nabla v_h^0(t)\|_{B_T}\le \int_0^t \| u_h^0(\tau)-\ddot u_h^0(\tau)\|_{B_T}\mathrm d\tau\le C_L B_2^{1/2}(\varphi,t),
\end{equation}
where we have used \eqref{eq:5.A}-\eqref{eq:5.B} in the last inequality.

\subsection{A second decomposition of $u_h^\varphi$}

An alternative decomposition to \eqref{eq:5.7} is needed to bound the density $\lambda_h^\varphi$. From this moment on, we need $\varphi\in \mathcal C^4_0([0,T];H^{1/2}(\Gamma))$. We now write
\begin{equation}
\label{eq:5.20}
u_h^\varphi= u_h^1+v_h^1,
\end{equation}
where $u_h^1:[0,T]\to H^1(B_T\setminus\Gamma)$ is the solution of the variational problems (for $0\le t\le T$)
\begin{equation}\label{eq:5.21}
\begin{array}{l}
u_h^1(t)\in H^1(B_T\setminus\Gamma),\\[1.5ex]
\gamma_T u_h^1(t)=0, \quad \jump{\gamma u_h^1(t)}=\varphi(t), \qquad \gamma^+ u_h^1(t)\in X_h^\circ,\\[1.5ex]
(\nabla u_h^1(t),\nabla v)_{B_T\setminus\Gamma}+(u_h^1(t),v)_{B_T}=(L\ddot\varphi(t)-L\varphi(t),v)_{B_T}\qquad\forall v \in V_h^T,
\end{array}
\end{equation}
and $v_h^1: [0,T]\to D_h^T$ is a solution of
\begin{equation}\label{eq:5.22}
\begin{array}{l}
\ddot v_h^1(t)=\Delta v_h^1(t)+f(t),\quad 0\le t\le T,\\[1.5ex]
v_h^1(0)=\dot v_h^1(0)=0,
\end{array}
\end{equation}
where $f:=u_h^1-\ddot u_h^1 + L\varphi-L\ddot\varphi=\Delta u_h^1-\ddot u_h^1:[0,T]\to V_h^T$
is continuous. 

\paragraph{1.} It is clear that $u_h^1\in \mathcal C^2([0,T];H^1_\Delta(B_T\setminus\Gamma))$ and $u_h^1(0)=\dot u_h^1(0)=0$. Also, using $v=u_h^1(t)-L\varphi(t)\in V_h^T$ as test function in \eqref{eq:5.21}, we can prove that
\begin{eqnarray}\label{eq:5.23}
\| u_h^1(t)\|_{1,B_T\setminus\Gamma}&\le& \|L\varphi(t)\|_{1,B_T\setminus\Gamma}+\| u_h^1(t)-L\varphi(t)\|_{1,B_T\setminus\Gamma}\\
&\le& C_L ( 3\|\varphi(t)\|_{1/2,\Gamma}+\|\ddot \varphi(t)\|_{1/2,\Gamma})\nonumber
\end{eqnarray}
and therefore
\begin{equation}\label{eq:5.24}
\|\Delta u_h^1(t)\|_{B_T\setminus\Gamma}\le C_L(4\|\varphi(t)\|_{1/2,\Gamma}+2\|\varphi(t)\|_{1/2,\Gamma})
\end{equation}
\paragraph{2.} 
Regularity of the solution of \eqref{eq:5.22} is given by \eqref{eq:4.15a} in Proposition \ref{prop:4.2}. Since \eqref{eq:5.23} can be differentiated twice with respect to time, we can bound
\begin{eqnarray*}
\|\nabla f(t)\|_{B_T}&\le & \| u_h^1(t)\|_{1,B_T\setminus\Gamma}+\| \ddot u_h^1(t)\|_{1,B_T\setminus\Gamma}+C_L(\|\varphi(t)\|_{1/2,\Gamma}+\|\ddot\varphi(t)\|_{1/2,\Gamma})\\
&\le & C_L ( 4\|\varphi(t)\|_{1/2,\Gamma}+ 5 \|\ddot \varphi(t)\|_{1/2,\Gamma}+\|\varphi^{(4)}(t)\|_{1/2,\Gamma})
\end{eqnarray*}
and thus
\begin{equation}\label{eq:5.25}
\|\Delta v_h^1\|_{B_T\setminus\Gamma} \le \int_0^t\|\nabla f(\tau)\|_{B_T}\mathrm d\tau\le C_L B_4(\varphi,t)\qquad 0\le t\le T.
\end{equation}

\subsection{Proof of Theorem \ref{th:5.1}}

We first define $u_h^\varphi$ with \eqref{eq:5.7}, where $u_h^0$ solves \eqref{eq:5.8} and $v_h^0$ solves \eqref{eq:5.9}. By \eqref{eq:5.8}, \eqref{eq:5.9}, \eqref{eq:5.10} and \eqref{eq:5.11}, it follows that $u_h^\varphi$ is the only weak solution of \eqref{eq:5.3} in the sense of Proposition \ref{prop:5.2} and it can be thus identified with the solution of \eqref{eq:5.1}-\eqref{eq:5.2}. As a direct consequence of \eqref{eq:5.A} and \eqref{eq:5.C}, it follows that
\begin{equation}\label{eq:5.D}
\| u_h^\varphi(t)\|_{1,B_T\setminus\Gamma}\le C_L \Big(\|\varphi(t)\|_{1/2,\Gamma}+\sqrt{1+C_T^2}\,B_2^{1/2}(\varphi,t)\Big), \qquad 0\le t\le T,
\end{equation}
and
\begin{equation}\label{eq:5.E}
\| \nabla u_h^\varphi(t)\|_{B_T}\le C_L  \Big(\|\varphi(t)\|_{1/2,\Gamma}+B_2^{1/2}(\varphi,t)\Big), \qquad 0\le t\le T.
\end{equation}

We next define $u_h^\varphi=u_h^1+v_h^1$, where $u_h^1$ satisfies \eqref{eq:5.21} and $v_h^1$ satisfies \eqref{eq:5.22}. It is clear that $u_h^\varphi$ is a strong solution of \eqref{eq:4.3} and therefore (Corollary \ref{prop:5.3}) coincides with the solution of \eqref{eq:5.1}-\eqref{eq:5.2}.  Since $u_h^\varphi\in \mathcal C([0,T];H^1_\Delta(B_T\setminus\Gamma)$, then $\lambda_h^\varphi =-\jump{\partial_\nu u_h^\varphi}\in \mathcal C([0,T];H^{-1/2}(\Gamma))$. By \eqref{eq:5.23} and \eqref{eq:5.24} it follows that
\begin{equation}\label{eq:5.26}
\| \Delta u_h^\varphi(t)\|_{B_T\setminus\Gamma}\le C_L \Big(4\|\varphi(t)\|_{1/2,\Gamma}+2\|\varphi(t)\|_{1/2,\Gamma}+ B_4^{1/2}(\varphi,t)\Big)\qquad 0\le t\le T.
\end{equation}
This inequality, \eqref{eq:5.E}, and \eqref{eq:4.7} prove finally that
\[
\|\lambda_h^\varphi(t)\|_{-1/2,\Gamma}\le \sqrt C_L C_\nu  \Big(4\|\varphi(t)\|_{1/2,\Gamma}+2\|\varphi(t)\|_{1/2,\Gamma} + B_4^{1/2}(\varphi,t)\Big)\qquad 0\le t\le T.
\]

\subsection{Proof of Theorem \ref{th:3.1}}

We use the notation \eqref{eq:4.1} for the Galerkin projection and \eqref{eq:5.1} for the Galerkin solver. Since, $(\Pi_h\lambda)_h^{\mathrm{G}}=\Pi_h \lambda$, we can bound
\[
\|(\lambda-\lambda_h)(t)\|_{-1/2,\Gamma} \le \| (\lambda-\Pi_h\lambda)(t)\|_{-1/2,\Gamma}+ \| (\lambda-\Pi_h\lambda)_h^{\mathrm G}(t)\|_{-1/2,\Gamma}+\|\lambda_h^{\varphi-\varphi_h}(t)\|_{-1/2,\Gamma}.
\]
The bound \eqref{eq:3.A} follows then from Theorems \ref{th:4.1} and \ref{th:5.1}. Similarly we write
\[
\| (u-u_h)(t)\|_{1,\mathbb R^d} \le \| \mathcal S*(\lambda-\Pi_h\lambda)(t)\|_{1,\mathbb R^d}+\| \mathcal S * (\lambda-\Pi_h\lambda)_h^{\mathrm G}(t)\|_{1,\mathbb R^d}+\|u_h^{\varphi-\varphi_h}(t)\|_{1,\mathbb R^d}
\]
and use \cite[Theorem 3.1]{DomSaySB}, Theorem \ref{th:4.1} and Theorem \ref{th:5.1} to prove \eqref{eq:3.B}.

\section{Analysis of time discretization}\label{sec:6}

\subsection{A passage to the Laplace domain}

For any value $s\in \mathbb C_+:=\{ s\in \mathbb C\,:\,\mathrm{Re }\, s >0\},$
we consider the fundamental solution of the differential operator $\Delta -s^2$, namely,
\[
\Phi(\mathbf x,\mathbf y;s):=\left\{
\begin{array}{ll}
\frac\imath4 H^{(1)}_0(\imath s|\mathbf x-\mathbf y|), & \mbox{for $d=2$},\\
\ds\frac{e^{-s|\mathbf x-\mathbf y|}}{4\pi|\mathbf x-\mathbf y|},& \mbox{for $d=3$}.
\end{array}
\right.
\]
(The function $H^{(1)}_0$ is the Hankel function of the first kind and order zero.) We also consider the single and double layer potentials
\begin{eqnarray*}
\mathrm S(s)\lambda := \int_\Gamma \Phi(\punto,\mathbf y;s)\lambda(\mathbf y)\mathrm d\Gamma(\mathbf y) & : & \mathbb R^d\setminus\Gamma \to \mathbb C,\\
\mathrm D(s)\varphi :=\int_\Gamma \partial_{\nu(\mathbf y)}\Phi(\punto,\mathbf y;s)\varphi(\mathbf y)\mathrm d\Gamma(\mathbf y) & : & \mathbb R^d\setminus\Gamma \to \mathbb C
\end{eqnarray*}
and the associated integral operators
\[
\mathrm V(s):=\gamma^+ \mathrm S(s)=\gamma^-\mathrm S(s) \qquad \mathrm K(s):=\smallfrac12 \gamma^+ \mathrm D(s)+\smallfrac12 \gamma^-\mathrm D(s).
\]
Consider then the Laplace transforms of the semidiscrete data $\Phi_h:=\mathcal L\{\varphi_h\}$ and of the semidiscrete solutions $\Lambda_h:=\mathcal L\{ \lambda_h\}$ and $\mathrm U_h:=\mathcal L\{ u_h\}$.
For $z\in \mathbb C_+$ and $\mathrm G \in H^{1/2}(\Gamma)$, we consider the uniquely solvable transmission problem looking for $\mathrm V\in H^1(\mathbb R^d\setminus\Gamma)$ such that
\begin{subequations}\label{eq:6.BB}
\begin{alignat}{4}
z^2 \mathrm V-\Delta \mathrm V = 0 & \qquad & & \mbox{in $\mathbb R^d\setminus\Gamma$}\\
\jump{\gamma \mathrm V} =\mathrm G, & & & \\
\gamma^- \mathrm V\in X_h^\circ, & & & \\
\jump{\partial_\nu \mathrm V}\in X_h. & & & 
\end{alignat}
\end{subequations}

\begin{proposition}\label{prop:6.1} For all $s\in \mathbb C_+$, $\mathrm U_h(s)$ is the unique solution of the transmission problem
\eqref{eq:6.BB} with $z=s$ and $\mathrm G=\Phi_h(s)$. Moreover $\Lambda_h(s)=-\jump{\partial_\nu \mathrm U_h(s)}$.
\end{proposition}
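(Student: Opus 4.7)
The plan is to pass everything to the Laplace domain and verify the four conditions of the transmission problem directly from the representation formula and the Galerkin equation. Taking the Laplace transform of the semidiscrete representation \eqref{eq:3.2} gives $\mathrm{U}_h(s) = -\mathrm{S}(s)\Lambda_h(s) - \mathrm{D}(s)\Phi_h(s)$, while the Laplace transform of \eqref{eq:3.1} yields the Galerkin identity
\[
\langle \mu_h,\mathrm{V}(s)\Lambda_h(s)\rangle_\Gamma = \langle \mu_h,\smallfrac12\Phi_h(s) - \mathrm{K}(s)\Phi_h(s)\rangle_\Gamma, \qquad \forall \mu_h \in X_h.
\]

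With $\mathrm{U}_h(s)$ written in this form, I would verify the transmission conditions one by one. The PDE $s^2 \mathrm{U}_h(s) - \Delta \mathrm{U}_h(s) = 0$ is built into the fundamental solution $\Phi(\cdot,\cdot;s)$, so both $\mathrm{S}(s)$ and $\mathrm{D}(s)$ applied to any density produce solutions in $\mathbb{R}^d\setminus\Gamma$. The jump of the trace follows from the standard Laplace-domain jump relations $\jump{\gamma \mathrm{S}(s)\Lambda_h(s)} = 0$ and $\jump{\gamma \mathrm{D}(s)\Phi_h(s)} = -\Phi_h(s)$, which give $\jump{\gamma \mathrm{U}_h(s)} = \Phi_h(s) = \mathrm{G}$. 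The jump of the normal derivative is $\jump{\partial_\nu \mathrm{U}_h(s)} = -\Lambda_h(s)$ (since $\jump{\partial_\nu \mathrm{S}(s)\Lambda_h(s)} = \Lambda_h(s)$ and $\jump{\partial_\nu \mathrm{D}(s)\Phi_h(s)} = 0$), which both proves the last assertion of the proposition and shows that this jump lies in $X_h$. Finally, the Galerkin boundary condition follows by computing the appropriate one-sided trace of $\mathrm{U}_h(s)$, expanding with the formulas $\gamma^\pm \mathrm{D}(s) = \mathrm{K}(s) \mp \tfrac12 I$ and $\gamma^\pm \mathrm{S}(s) = \mathrm{V}(s)$, testing against $\mu_h \in X_h$, and invoking the Galerkin identity displayed above; the two halves of $\Phi_h(s)$ and the $\mathrm{K}(s)\Phi_h(s)$ terms cancel exactly, leaving zero.

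For uniqueness, I would take the difference $\mathrm{W}$ of two candidate solutions, so that $\mathrm{W}$ solves the transmission problem with $\mathrm{G} = 0$. The vanishing trace jump upgrades $\mathrm{W}$ to a global element of $H^1(\mathbb{R}^d)$, so $\gamma \mathrm{W} \in X_h^\circ$ is unambiguous. Testing the equation $s^2 \mathrm{W} - \Delta \mathrm{W} = 0$ against $\bar s\,\overline{\mathrm{W}}$, integrating by parts separately in $\Omega^-$ and in $\mathbb{R}^d\setminus \overline{\Omega^-}$, and regrouping the boundary contributions as $\langle \jump{\partial_\nu \mathrm{W}},\overline{\gamma \mathrm{W}}\rangle_\Gamma$ makes the boundary term vanish because $\jump{\partial_\nu \mathrm{W}} \in X_h$ is paired with $\gamma \mathrm{W} \in X_h^\circ$. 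Taking real parts of the resulting identity $|s|^2 s\|\mathrm{W}\|^2 + \bar s\,\|\nabla\mathrm{W}\|^2 = 0$ and dividing by $\mathrm{Re}\,s > 0$ forces $\mathrm{W} = 0$.

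The only subtle step is the sign bookkeeping in verifying the Galerkin boundary condition: everything else is a mechanical translation of the time-domain semidiscrete problem through the Laplace transform, using that the Laplace symbols of $\mathcal{V}$, $\mathcal{K}$, $\mathcal{S}$, $\mathcal{D}$ are precisely $\mathrm{V}(s)$, $\mathrm{K}(s)$, $\mathrm{S}(s)$, $\mathrm{D}(s)$. If desired, one may alternatively derive the proposition by directly Laplace-transforming the time-domain transmission problem \eqref{eq:5.3} (with $\varphi$ replaced by $\varphi_h$) established in Section \ref{sec:5}, which already encodes exactly these transmission conditions; the verification above is essentially the Laplace-transformed counterpart of Corollary \ref{prop:5.3}.
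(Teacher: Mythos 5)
Your proposal is correct and follows essentially the same route as the paper: the paper's entire proof consists of Laplace-transforming \eqref{eq:3.1}--\eqref{eq:3.2} to obtain the relations \eqref{eq:6.1} and declaring that these prove the result, so you are simply making explicit the verification of the four conditions of \eqref{eq:6.BB} and supplying the standard energy argument for uniqueness that the paper takes for granted when it introduces \eqref{eq:6.BB} as ``uniquely solvable.'' One caveat on your ``sign bookkeeping'': with your stated formulas $\gamma^\pm\mathrm D(s)=\mathrm K(s)\mp\smallfrac12 I$ (the reading of \eqref{eq:2.9} and \eqref{eq:2.9b} with $\jump{\cdot}$ meaning exterior minus interior), the cancellation you describe occurs for the \emph{exterior} trace, i.e., it yields $\gamma^+\mathrm U_h(s)\in X_h^\circ$ as in \eqref{eq:5.3e}, whereas \eqref{eq:6.BB} as written demands $\gamma^-\mathrm V\in X_h^\circ$. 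This mismatch is an inconsistency internal to the paper rather than an error of yours (compare \eqref{eq:5.3e} against \eqref{eq:6.BB}, and note that the representation \eqref{eq:2.10} is only consistent with the interior-minus-exterior jump convention), so your argument stands provided you fix one convention throughout and state explicitly which one-sided trace the Galerkin identity annihilates.
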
 

\begin{proof}
 Note that for all $s\in \mathbb C_+$, $\Lambda_h(s)\in X_h$ and that we have the relationships
\begin{equation}\label{eq:6.1}
\mathrm V(s)\Lambda_h(s)-\smallfrac12 \Phi_h(s) +\mathrm K(s) \Phi_h(s)\in  X_h^\circ, \quad \mathrm U_h(s)=-\mathrm S(s)\Lambda_h(s)-\mathrm D(s)\Phi_h(s).
\end{equation}
This proves the result.
\end{proof}

The CQ discretization affects all four convolutions in \eqref{eq:6.1}. It defines causal functions $u_h^\kappa$ and $\lambda_h^\kappa$ such that $u_h^\kappa(t_n)=u_h^\kappa[n]$ and $\lambda_h^\kappa(t_n)=\lambda_h^\kappa[n]$. Let then
$\Lambda_h^\kappa:=\mathcal L\{ \lambda_h^\kappa\}$ and $\mathrm U_h^\kappa:=\mathcal L\{ u_h^\kappa\}$.

\begin{proposition}\label{prop:6.2} For all $s\in \mathbb C_+$, $\mathrm U_h^\kappa(s)$ is the unique solution of the transmission problem \eqref{eq:6.BB} with 
\[
z=s_\kappa:=\frac{\delta(e^{-s\kappa})}{\kappa}\in \mathbb C_+ \quad \forall s\in \mathbb C_+,
\]
and $\mathrm G=\Phi_h(s)$.
Moreover $\Lambda_h^\kappa(s)=-\jump{\partial_\nu \mathrm U_h^\kappa(s)}$.
\end{proposition}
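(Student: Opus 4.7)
The strategy is to reduce Proposition \ref{prop:6.2} to Proposition \ref{prop:6.1} by showing that $(\Lambda_h^\kappa(s), \mathrm U_h^\kappa(s))$ satisfies the very same Laplace-domain relations \eqref{eq:6.1} that characterize $(\Lambda_h(s), \mathrm U_h(s))$, but with every occurrence of the boundary/domain operator evaluated at $s_\kappa$ rather than at $s$. Once this is in place, the argument used to prove Proposition \ref{prop:6.1} applies verbatim with $s$ replaced throughout by $s_\kappa$.

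First I would verify that $s_\kappa \in \mathbb C_+$ whenever $s\in \mathbb C_+$. This is the A-stability of the three multistep methods encoded in $\delta$: for backward Euler and BDF2, $\delta$ maps the open unit disc into $\mathbb C_+$, and for the trapezoidal rule, $2(1-\zeta)/(1+\zeta)$ maps the unit disc exactly onto $\mathbb C_+$. Since $|e^{-s\kappa}|<1$ when $\mathrm{Re}\, s>0$, in all three cases $s_\kappa = \kappa^{-1}\delta(e^{-s\kappa})\in \mathbb C_+$. This justifies the very formulation of the target transmission problem \eqref{eq:6.BB} with $z=s_\kappa$, whose unique solvability is already available.

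Second, I would appeal to the defining property of CQ at the Laplace level: the continuous causal functions $\lambda_h^\kappa$ and $u_h^\kappa$ are constructed from \eqref{eq:3.5} and its postprocessing counterpart, and taking the $Z$-transform in $\zeta$ of these recursions and evaluating at $\zeta=e^{-s\kappa}$ converts the discrete convolutions into multiplications by $\mathbb V_h(s_\kappa)$, $\mathbb K_h(s_\kappa)$, $\mathbb S_h(s_\kappa)$, $\mathbb D_h(s_\kappa)$, thanks to the Taylor definitions \eqref{eq:3.4}. Rewritten in operator form on $H^{-1/2}(\Gamma)$ and $H^{1/2}(\Gamma)$, this yields
\[
\Lambda_h^\kappa(s)\in X_h, \qquad \mathrm V(s_\kappa)\Lambda_h^\kappa(s)-\tfrac12\Phi_h(s)+\mathrm K(s_\kappa)\Phi_h(s)\in X_h^\circ,
\]
together with $\mathrm U_h^\kappa(s) = -\mathrm S(s_\kappa)\Lambda_h^\kappa(s)-\mathrm D(s_\kappa)\Phi_h(s)$. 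These are precisely the relations \eqref{eq:6.1} with $s$ replaced by $s_\kappa$.

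Third, the jump relations of the single and double layer potentials associated with the shifted Helmholtz operator $\Delta - s_\kappa^2$ are identical in form to those for $\Delta - s^2$, so the verification that $\mathrm U_h^\kappa(s)$ solves \eqref{eq:6.BB} with $z=s_\kappa$ and $\mathrm G = \Phi_h(s)$, and that $\Lambda_h^\kappa(s)=-\jump{\partial_\nu \mathrm U_h^\kappa(s)}$, follows line by line from the proof of Proposition \ref{prop:6.1}. Uniqueness of the transmission problem closes the argument. The only potential obstacle is bookkeeping: one must fix, once and for all, the convention identifying the continuous-time representative $\lambda_h^\kappa$ (resp.\ $u_h^\kappa$) with its sequence of samples so that the $Z$-transform/Laplace correspondence $\zeta \leftrightarrow e^{-s\kappa}$ produces exactly the operator identity above; this is intrinsic to the CQ construction and requires no further analysis.
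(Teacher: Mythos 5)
Your proposal is correct and follows essentially the same route as the paper: the paper's proof simply observes that, by the CQ construction, $\Lambda_h^\kappa(s)\in X_h$ and the relations \eqref{eq:6.3} hold (i.e., \eqref{eq:6.1} with the operators evaluated at $s_\kappa$), after which the conclusion follows exactly as in Proposition \ref{prop:6.1}. You merely spell out the two ingredients the paper treats as ``by construction''---the A-stability argument giving $s_\kappa\in\mathbb C_+$ and the $Z$-transform identity turning the discrete convolutions \eqref{eq:3.5} into multiplication by the operators at $s_\kappa$---so the two proofs coincide in substance.
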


\begin{proof}
 By construction $\Lambda_h^\kappa(s)\in X_h$ and
\begin{equation}\label{eq:6.3}
\mathrm V(s_\kappa)\Lambda_h^\kappa(s)-\smallfrac12 \Phi_h(s) +\mathrm K(s_\kappa) \Phi_h(s)\in  X_h^\circ, \quad \mathrm U_h^\kappa(s)=-\mathrm S(s_\kappa)\Lambda_h^\kappa(s)-\mathrm D(s_\kappa)\Phi_h(s).
\end{equation}
The result is then straightforward.
\end{proof}

Note that each occurence of $s_\kappa$ in \eqref{eq:6.3} corresponds to the discretization of a convolution process.

We finally consider the errors of time discretization of the semidiscrete-in-space problem
\[
e=e_h^\kappa:=u_h-u_h^\kappa \qquad \mathrm E:=\mathrm U_h-\mathrm U_h^\kappa=\mathcal L\{e\} \qquad \varepsilon=\varepsilon_h^\kappa:=\lambda_h-\lambda_h^\kappa.
\]
On time steps, we will be considering the errors
\[
e_n:=e(t_n)=u_h(t_n)-u_h^\kappa[n], \qquad \varepsilon_n:=\varepsilon(t_n)=\lambda_h(t_n)-\lambda_h^\kappa[n].
\]
Applying Propositions \ref{prop:6.1} and \ref{prop:6.2}, it follows that for all $s\in \mathbb C_+$
\begin{equation}\label{eq:6.5}
\mathrm E(s)\in D_h:=\{ u \in H^1(\mathbb R^d)\cap H^1_\Delta (\mathbb R^d\setminus\Gamma)\,:\, \gamma u\in X_h^\circ, \, \jump{\partial_\nu u}\in X_h\},
\end{equation}
and
\begin{equation}\label{eq:6.6}
s_\kappa^2 \mathrm E(s)-\Delta \mathrm E(s)=(s_\kappa^2-s^2) \mathrm U_h(s)=:\Theta(s).
\end{equation}
In the time domain, the function $\theta=\theta_h^\kappa$, whose Laplace transform is $\Theta$, corresponds to a consistency error of the time discretization --the approximation of the second derivative by the particular  CQ scheme--, applied to the semidiscrete-in-space solution. Before we start the analysis of each of the time discretization methods, let us mention the integration-by-parts formula in $D_h$, which will play an important role in the forthcoming analysis:
\begin{equation}\label{eq:6.7}
(\Delta u,v)_{\mathbb R^d\setminus\Gamma}+(\nabla u,\nabla v)_{\mathbb R^d}=\langle \jump{\partial_\nu u},\gamma v\rangle=0 \qquad \forall u,v\in D_h.
\end{equation}
Bounds with respect to data will be given in terms of the following quantities:
\[
B_k^{1/2}(\varphi_h,t):=\sum_{\ell=3}^k \int_0^t \|\varphi_h^{(\ell)}(\tau)\|_{1/2,\Gamma}\mathrm d\tau \qquad (k\ge 5).
\]
The following product (semi)norm
\[
\triple{(u,v)}:=\Big(\| \nabla u\|_{\mathbb R^d}^2+\| v\|_{\mathbb R^d}^2\Big)^{1/2}
\]
will be used to simplify some formulas.

\subsection{Analysis of the Backward Euler discretization}

\begin{proposition}\label{prop:6.3}
For all $n\ge 1$,
\begin{equation}\label{eq:6.8}
\triple{(e_n,\smallfrac1\kappa (e_n-e_{n-1}))}
\le \kappa \, C t_n(1+t_n) \, B_5^{1/2}(\varphi_h,t_n).
\end{equation}
\end{proposition}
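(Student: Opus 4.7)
The plan is to translate \eqref{eq:6.6} into a discrete wave equation posed in $D_h$, run a discrete energy estimate by testing against the first backward difference of the error, and then convert the resulting bound on the consistency error into a bound on data via Theorem~\ref{th:5.1}. For backward Euler, $\delta(\zeta)=1-\zeta$, so $s_\kappa$ corresponds in time to the backward difference $\partial^\kappa f_n:=(f_n-f_{n-1})/\kappa$ and $s_\kappa^2$ to $(\partial^\kappa)^2 f_n=(f_n-2f_{n-1}+f_{n-2})/\kappa^2$. The difference $\mathrm E(s)=\mathrm U_h(s)-\mathrm U_h^\kappa(s)$ solves \eqref{eq:6.BB} for both $z=s$ and $z=s_\kappa$ with the same data $\Phi_h(s)$, so the jump of the trace cancels and $\mathrm E(s)\in D_h$. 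Passing to the time domain yields $e_n,\partial^\kappa e_n\in D_h$ together with
\[
(\partial^\kappa)^2 e_n-\Delta e_n=\theta_n\quad\mbox{in }\mathbb R^d\setminus\Gamma,\qquad e_n=0\ \mbox{for}\ n\le 0,
\]
where $\theta_n=(\partial^\kappa)^2 u_h(t_n)-\ddot u_h(t_n)$.

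Testing this equation against $\partial^\kappa e_n\in D_h$ in $L^2(\mathbb R^d)$ and using the integration-by-parts identity \eqref{eq:6.7} (with no boundary contribution, since both factors lie in $D_h$) gives
\[
((\partial^\kappa)^2 e_n,\partial^\kappa e_n)_{\mathbb R^d}+(\nabla e_n,\nabla\partial^\kappa e_n)_{\mathbb R^d}=(\theta_n,\partial^\kappa e_n)_{\mathbb R^d}.
\]
Applying the elementary inequality $(a-b,a)\ge\tfrac12(\|a\|^2-\|b\|^2)$ to both terms on the left produces the telescoping lower bound $\tfrac{1}{2\kappa}\bigl(\triple{(e_n,\partial^\kappa e_n)}^2-\triple{(e_{n-1},\partial^\kappa e_{n-1})}^2\bigr)$. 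Multiplying by $2\kappa$, summing over $n=1,\ldots,N$, and using the standard maximum argument with Cauchy--Schwarz on the right gives
\[
\triple{(e_N,\partial^\kappa e_N)}\le 2\kappa\sum_{n=1}^{N}\|\theta_n\|_{L^2(\mathbb R^d\setminus\Gamma)}.
\]

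For the consistency error, Taylor with integral remainder yields
\[
u_h(t_n)-2u_h(t_{n-1})+u_h(t_{n-2})=\int_{0}^{\kappa}\!\int_{0}^{\kappa}\ddot u_h(t_n-s-r)\,dr\,ds,
\]
which, after comparison with $\kappa^2\ddot u_h(t_n)$, gives $\|\theta_n\|_{L^2}\le\int_{t_{n-2}}^{t_n}\|u_h^{(3)}(\tau)\|_{L^2(\mathbb R^d\setminus\Gamma)}\,d\tau$. Since each $\tau\in[0,t_N]$ lies in at most two of the intervals $[t_{n-2},t_n]$, summation gives $\kappa\sum_{n=1}^{N}\|\theta_n\|\le 2\kappa\int_{0}^{t_N}\|u_h^{(3)}(\tau)\|\,d\tau$. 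By linearity of the Galerkin solver, $u_h^{(3)}=u_h^{\varphi_h^{(3)}}$, so Theorem~\ref{th:5.1} applied with datum $\varphi_h^{(3)}$ controls $\|u_h^{(3)}(\tau)\|_{1,\mathbb R^d\setminus\Gamma}$ in terms of $\varphi_h^{(3)},\varphi_h^{(4)},\varphi_h^{(5)}$. Integrating this bound in $\tau\in[0,t_N]$, using $C_\tau=C_0(1+\tau/R)$ and the monotonicity $B_2^{1/2}(\varphi_h^{(3)},\tau)\le B_5^{1/2}(\varphi_h,\tau)$, yields $\int_{0}^{t_N}\|u_h^{(3)}(\tau)\|\,d\tau\le C\,t_N(1+t_N)\,B_5^{1/2}(\varphi_h,t_N)$, and combining with the energy bound above produces \eqref{eq:6.8}. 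The main nuisance will be the careful $\kappa$-bookkeeping in this last chain, together with verifying that the backward differences of $e_n$ inherit the full $D_h$-membership needed to apply \eqref{eq:6.7} at every step; both are direct consequences of Propositions~\ref{prop:6.1}--\ref{prop:6.2}, but they need to be checked to justify the energy identity without boundary contributions.
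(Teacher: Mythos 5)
Your proof is correct and takes essentially the same approach as the paper: identify $e_n$ as a $D_h$-valued solution of the backward-Euler discrete wave equation with the consistency error $\theta_n$ as source, run a discrete energy estimate based on the integration-by-parts formula \eqref{eq:6.7}, bound $\theta_n$ via Taylor expansion in terms of $u_h^{(3)}$, and convert to data norms through Theorem~\ref{th:5.1} applied to $u_h^{(3)}=u_h^{\varphi_h^{(3)}}$. The only differences are cosmetic: the paper tests the equivalent first-order system \eqref{eq:6.9} with $(-\Delta e_n,f_n)$ and obtains the linear recurrence $\triple{(e_n,f_n)}\le\triple{(e_{n-1},f_{n-1})}+\kappa\|\theta_n\|_{\mathbb R^d}$ directly from Cauchy--Schwarz, whereas you use a squared-energy telescoping plus maximum argument (the device the paper reserves for the trapezoidal rule in Proposition~\ref{prop:6.8}) and the integral rather than the max form of the Taylor remainder, both of which lead to the same bound.
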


\begin{proof}
Let $f_n:= \frac1\kappa (e_n-e_{n-1})$. Noticing that for the Backward Euler discretization $s_\kappa=\frac1\kappa(1-e^{-s\kappa})$, the error equation \eqref{eq:6.6} can be written as
\begin{equation}\label{eq:6.9}
e_n-e_{n-1}=\kappa f_n, \qquad f_n-f_{n-1}=\kappa \Delta e_n+\kappa \theta_n
\end{equation}
where we can bound the consistency error as
\begin{equation}
\|\theta_n\|_{\mathbb R^d}=\left\|\frac{u_h(t_n)-2u_h(t_{n-1})+u_h(t_{n-2})}{\kappa^2}- \ddot u_h(t_n)\right\|_{\mathbb R^d}
\le \frac{5 \kappa}3 \max_{t_{n-2}\le t\le t_n} \| u_h^{(3)}(t)\|_{\mathbb R^d}.\label{eq:6.10}
\end{equation}
Testing the equations \eqref{eq:6.9} with $-\Delta e_n$ and $f_n$ respectively, adding the results, and applying the integration by parts formula \eqref{eq:6.7} (note that $e_n \in D_h$ for all $n$, since $e$ takes values in this space by \eqref{eq:6.5}), it follows that
\[
(\nabla e_n,\nabla e_n)_{\mathbb R^d}+(f_n,f_n)_{\mathbb R^d}=(\nabla e_n,\nabla e_{n-1})_{\mathbb R^d}+(f_n,f_{n-1})_{\mathbb R^d}+\kappa\, (\theta_n,f_n)_{\mathbb R^d}
\] 	
and therefore
\begin{equation}\label{eq:6.11}
\triple{(e_n,f_n)}\le \triple{(e_{n-1},f_{n-1})}+\kappa\,\|\theta_n\|_{\mathbb R^d} \qquad \forall n.
\end{equation}
Then, by induction
\[
\triple{(e_n,f_n)} \le \kappa \sum_{j=1}^n \| \theta_j\|_{\mathbb R^d}.
\]
Using now \eqref{eq:6.10} and Theorem \ref{th:5.1} (recall that $\varphi \mapsto u_h^\varphi$ is a convolution operator and therefore commutes with differentiation), it follows that
\begin{equation}\label{eq:6.12}
\| \theta_j\|_{\mathbb R^d} \le \frac53\, C_L\,\kappa \, \Big(\max_{t_{j-2}\le t\le t_j}\| \varphi_h^{(3)}(t)\|_{1/2,\Gamma}+ \sqrt{1+C_{t_j}^2} B_2^{1/2}(\varphi_h^{(3)},t_n)\Big),
\end{equation}
where we have also used the fact that the constant $C_t$ of \eqref{eq:4.3} grows with $t$. Adding the bounds \eqref{eq:6.12} for different values of $j$, using the overestimate
\[
\max_{t_{j-2}\le t\le t_j}\| \varphi_h^{(3)}(t)\|_{1/2,\Gamma}\le \max_{0\le t\le t_n}\| \varphi_h^{(3)}(t)\|_{1/2,\Gamma}\le
\int_0^ {t_n}\| \varphi_h^{(4)}(t)\|_{1/2,\Gamma}\mathrm d t,
\]
the result follows.
\end{proof}

\begin{theorem}\label{th:6.4} For all $n\ge 1$
\begin{eqnarray}\label{eq:6.13}
\| e_n\|_{1,\mathbb R^d}&  \le & \kappa \, t_n(1+t_n^2)  B_5^{1/2}(\varphi_h,t_n) \\
\label{eq:6.14}
\| \varepsilon_n\|_{-1/2,\Gamma} &\le & \kappa \,(1+t_n^2) B_6^{1/2}(\varphi_h,t_n). 
\end{eqnarray}
\end{theorem}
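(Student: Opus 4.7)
The plan is to derive \eqref{eq:6.13} by combining the gradient estimate of Proposition \ref{prop:6.3} with a telescoping $L^2$ bound for $e_n$, and to derive \eqref{eq:6.14} by converting $\varepsilon_n=-\jump{\partial_\nu e_n}$ into volume norms via \eqref{eq:4.7} and then controlling the extra $\|\Delta e_n\|$ term by applying the Proposition \ref{prop:6.3} argument to the first-order discrete difference of the error recurrence.

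For \eqref{eq:6.13}, I observe that $e_0=0$ and $e_n-e_{n-1}=\kappa f_n$ telescope into $e_n=\kappa\sum_{j=1}^n f_j$, so
\[
\|e_n\|_{\mathbb R^d}\le\kappa\sum_{j=1}^n\|f_j\|_{\mathbb R^d}\le t_n\max_{1\le j\le n}\triple{(e_j,f_j)}\le C\kappa\, t_n^2(1+t_n)\,B_5^{1/2}(\varphi_h,t_n).
\]
Combined with the gradient estimate $\|\nabla e_n\|_{\mathbb R^d}\le\triple{(e_n,f_n)}\le C\kappa\,t_n(1+t_n)\,B_5^{1/2}(\varphi_h,t_n)$ supplied by Proposition \ref{prop:6.3}, the triangle inequality in the $H^1$ norm produces $\|e_n\|_{1,\mathbb R^d}\le C\kappa\,t_n(1+t_n)^2B_5^{1/2}(\varphi_h,t_n)\le C\kappa\,t_n(1+t_n^2)B_5^{1/2}(\varphi_h,t_n)$, which is \eqref{eq:6.13}.

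For \eqref{eq:6.14}, Propositions \ref{prop:6.1} and \ref{prop:6.2} give $\varepsilon_n=-\jump{\partial_\nu e_n}$, and since $e_n\in D_h\subset H^1_\Delta(\mathbb R^d\setminus\Gamma)$, applying \eqref{eq:4.7} on $B_0\setminus\Gamma$ and majorising by norms on $\mathbb R^d$ yields
\[
\|\varepsilon_n\|_{-1/2,\Gamma}\le C_\nu\bigl(\|\Delta e_n\|_{\mathbb R^d\setminus\Gamma}^2+\|\nabla e_n\|_{\mathbb R^d}^2\bigr)^{1/2}.
\]
The gradient term is already controlled by Proposition \ref{prop:6.3}. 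For $\|\Delta e_n\|$, rearranging \eqref{eq:6.9} gives $\Delta e_n=(f_n-f_{n-1})/\kappa-\theta_n$, the term $\|\theta_n\|$ is controlled by \eqref{eq:6.12}, and to bound $(f_n-f_{n-1})/\kappa$ I take first differences: setting $d_n:=e_n-e_{n-1}$, $g_n:=f_n-f_{n-1}$ and $\Theta_n:=\theta_n-\theta_{n-1}$, one has $d_n\in D_h$ (since $D_h$ is a linear space), $d_0=g_0=0$ by causality, and subtracting \eqref{eq:6.9} at indices $n$ and $n-1$ yields
\[
d_n-d_{n-1}=\kappa g_n,\qquad g_n-g_{n-1}=\kappa\Delta d_n+\kappa\Theta_n.
\]
These recurrences have exactly the form of \eqref{eq:6.9}, so the energy argument of Proposition \ref{prop:6.3} applies verbatim and delivers $\|g_n\|_{\mathbb R^d}\le\kappa\sum_{j=1}^n\|\Theta_j\|_{\mathbb R^d}$. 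A Taylor expansion of the finite differences making up $\theta_j-\theta_{j-1}$ gives the sharper consistency bound $\|\Theta_j\|_{\mathbb R^d}\le C\kappa^2\max_{[t_{j-3},t_j]}\|u_h^{(4)}(t)\|_{\mathbb R^d}$; applying Theorem \ref{th:5.1} to $\varphi_h^{(4)}$ (using that $\varphi\mapsto u_h^\varphi$ is a convolution and therefore commutes with $\partial_t$) bounds this by $C\kappa^2(1+t_j)B_6^{1/2}(\varphi_h,t_j)$. Summing yields $\|(f_n-f_{n-1})/\kappa\|\le C\kappa\,t_n(1+t_n)\,B_6^{1/2}(\varphi_h,t_n)$, and inserting everything into the inequality for $\|\Delta e_n\|$ gives $\|\Delta e_n\|\le C\kappa(1+t_n^2)B_6^{1/2}(\varphi_h,t_n)$; substituting in the bound for $\|\varepsilon_n\|$ closes the argument.

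The main obstacle is extracting the extra factor of $\kappa$ needed in the estimate for $\|\Delta e_n\|$ without losing a power of $t_n$: a naive bound on $(f_n-f_{n-1})/\kappa$ directly from Proposition \ref{prop:6.3} would only give an $O(1)$ in $\kappa$ estimate. The differencing trick effectively re-runs the scheme on the discrete derivatives of $e$, $f$, and $\theta$, and the one order of extra consistency it gains comes from the Lipschitz regularity of $\theta$; the price is one additional derivative on the data, which is exactly what lifts $B_5^{1/2}$ to $B_6^{1/2}$ in \eqref{eq:6.14}. Keeping the polynomial factors in $t_n$ within $1+t_n^2$ throughout, and verifying that the differenced quantities stay in $D_h$ so the integration-by-parts identity \eqref{eq:6.7} remains available, are the main pieces of bookkeeping.
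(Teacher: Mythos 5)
Your proof is correct. For \eqref{eq:6.13} it coincides with the paper's argument: telescope $e_n-e_{n-1}=\kappa f_n$ to control $\|e_n\|_{\mathbb R^d}$ and combine with Proposition~\ref{prop:6.3} for the gradient. For \eqref{eq:6.14} you diverge at the crucial point, namely the bound of $\kappa^{-1}\|f_n-f_{n-1}\|_{\mathbb R^d}$ in the identity $\Delta e_n=\kappa^{-1}(f_n-f_{n-1})-\theta_n$. The paper works in continuous time: since $f=(e-e(\punto-\kappa))/\kappa$ is a causal function, it bounds $\kappa^{-1}\|f_n-f_{n-1}\|_{\mathbb R^d}\le\max_{t_{n-1}\le t\le t_n}\|\dot f(t)\|_{\mathbb R^d}$ and then estimates $\dot f$ at an arbitrary time $t$ by viewing it as the CQ error produced by the data $\dot\varphi_h$ on a shifted mesh containing $t$, to which Proposition~\ref{prop:6.3} applies (the citation of Proposition 6.1 at that point of the paper is a typo for 6.3). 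You instead stay entirely on the grid: you difference the recurrence \eqref{eq:6.9} itself, verify that $d_n:=e_n-e_{n-1}\in D_h$ (so the integration-by-parts formula \eqref{eq:6.7} remains available) and that $d_0=g_0=0$ by causality, rerun the energy induction of Proposition~\ref{prop:6.3} on $(d_n,g_n)$ with consistency error $\Theta_n=\theta_n-\theta_{n-1}$, and extract the extra power of $\kappa$ from the Taylor estimate $\|\Theta_j\|_{\mathbb R^d}\le C\kappa^2\max\|u_h^{(4)}\|_{\mathbb R^d}$. Both routes yield the same intermediate bound $\|\Delta e_n\|_{\mathbb R^d\setminus\Gamma}\le C\kappa(1+t_n^2)B_6^{1/2}(\varphi_h,t_n)$, require the same data regularity (six derivatives, which is why $B_5^{1/2}$ becomes $B_6^{1/2}$), and conclude identically through $\varepsilon_n=-\jump{\partial_\nu e_n}$ and \eqref{eq:4.7}. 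What your variant buys is that it never needs the fully discrete error to be defined and estimated at off-grid times, nor the commutation of the fully discrete solution map with time differentiation of the data --- you use commutation only at the semidiscrete level, when applying Theorem~\ref{th:5.1} to $\varphi_h^{(4)}$; the price is the modest extra bookkeeping of the differenced sequences, and the paper's argument is shorter once one accepts the shifted-mesh device.
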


\begin{proof} Using the notation of the proof of Proposition \ref{prop:6.3}, and since $e_n=e_{n-1}+\kappa f_n$, we can easily bound (using Proposition \ref{prop:6.3})
\[
\| e_n\|_{\mathbb R^d}\le \kappa \sum_{j=1}^n \| f_j\|_{\mathbb R^d} \le \kappa\,C\,t_n\,(1+t_n)\,B_5^{1/2}(\varphi_h,t_n).
\]
This inequality and Proposition \ref{prop:6.3} prove \eqref{eq:6.13}. 

Note now that $f_n=f(t_n)$, where $f=(e-e(\punto-\kappa))/\kappa$. Therefore, using the second of the equalities \eqref{eq:6.9}, we can bound
\begin{equation}\label{eq:6.15}
\|\Delta e_n\|_{\mathbb R^d\setminus\Gamma}=\left\|\frac1\kappa (f_n-f_{n-1})-\theta_n\right\|_{\mathbb R^d}\le \max_{t_{n-1}\le t\le t_n}\|\dot f(t)\|_{\mathbb R^d}+\|\theta_n\|_{\mathbb R^d}.
\end{equation}
For  $t\in [t_{n-1},t_n]$, we can construct a mesh with time-step $\kappa$ that includes $t$. Then, applying
 Proposition \ref{prop:6.1} with data $\dot\varphi_h$ on this mesh, it follows that
\[
\| \dot f(t)\|_{\mathbb R^d} \le \kappa \,C\,t(1+t)\,B_5^{1/2}(\dot\varphi_h,t) \le \kappa\,C t_n(1+t_n) \,B_5^{1/2}(\dot\varphi_h,t_n).
\]
This inequality, the bound \eqref{eq:6.12} for the consistency error and \eqref{eq:6.15} provide a bound for the Laplacian of the error
\begin{equation}\label{eq:6.16}
\| \Delta e_n\|_{\mathbb R^d\setminus\Gamma} \le \kappa\,C\,(1+t_n^2)\, B_6^{1/2}(\varphi_h,t_n).
\end{equation}
Since by Propositions \ref{prop:6.1} and \ref{prop:6.2} it follows that $\jump{\partial_\nu e_n}=-\varepsilon_n$, the bound \eqref{eq:6.14} is a direct consequence of \eqref{eq:4.7}, Proposition \ref{prop:6.3} and \eqref{eq:6.16}.
\end{proof}

\subsection{Analysis of the BDF2 discretization}

\begin{lemma}\label{lemma:6.5}
Given $\mathrm G \in L^2(\mathbb R^d)$, let $\mathrm V \in D_h$ solve
\[
s_\kappa^2 \mathrm V-\Delta \mathrm V=(s_\kappa^2-s^2)s^{-3}\mathrm G \quad \mbox{in $\mathbb R^d\setminus\Gamma$}
\]
Then
\[
\triple{(\mathrm V, s_\kappa \mathrm V)}
\le C \kappa^2 \frac{|s|}{\min\{1,\mathrm{Re}\,s\}} \|\mathrm G\|_{\mathbb R^d} \quad \forall s\in \mathbb C_+, \quad \forall \kappa, \quad \forall h.
\]
\end{lemma}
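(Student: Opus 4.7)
The natural energy test function is $\overline{s_\kappa \mathrm{V}}$, which is admissible in the integration-by-parts identity \eqref{eq:6.7} because $D_h$ is closed under scalar multiplication, so $s_\kappa\mathrm{V}\in D_h$ as well. Pairing the PDE with $\overline{s_\kappa \mathrm{V}}$ in the complex $L^2$ inner product on $\mathbb{R}^d$ and invoking \eqref{eq:6.7} (the boundary term vanishes because both factors are in $D_h$) yields
\[
s_\kappa |s_\kappa|^2\,\|\mathrm{V}\|_{\mathbb{R}^d}^{\,2} + \overline{s_\kappa}\,\|\nabla\mathrm{V}\|_{\mathbb{R}^d}^{\,2}
= \overline{s_\kappa}\,\frac{s_\kappa^{\,2}-s^{2}}{s^{3}}\,(\mathrm{G},\mathrm{V})_{\mathbb{R}^d}.
\]
Taking real parts on both sides neatly factors out a common $\mathrm{Re}(s_\kappa)$ on the left, producing $\mathrm{Re}(s_\kappa)\triple{(\mathrm{V},s_\kappa\mathrm{V})}^2$. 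Applying Cauchy--Schwarz on the right, then absorbing $|s_\kappa|\,\|\mathrm{V}\|_{\mathbb{R}^d}=\|s_\kappa\mathrm{V}\|_{\mathbb{R}^d}\le\triple{(\mathrm{V},s_\kappa\mathrm{V})}$ and dividing once by $\triple{(\mathrm{V},s_\kappa\mathrm{V})}$ will give the clean intermediate bound
\[
\mathrm{Re}(s_\kappa)\,\triple{(\mathrm{V},s_\kappa\mathrm{V})} \;\le\; \frac{|s_\kappa^{\,2}-s^{2}|}{|s|^{3}}\,\|\mathrm{G}\|_{\mathbb{R}^d}.
\]

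\textbf{Closing the estimate.} From here the result reduces to two classical properties of the BDF2 symbol $\delta(\zeta)=\tfrac{3}{2}-2\zeta+\tfrac{1}{2}\zeta^{2}$, both of which are standard in the convolution quadrature literature: (i) \emph{A-stability}, i.e.\ $\mathrm{Re}(s_\kappa)\ge c\min\{1,\mathrm{Re}(s)\}$ uniformly in $s\in\mathbb{C}_+$ and $\kappa>0$; and (ii) \emph{second-order consistency in the Laplace domain}, i.e.\ $|s_\kappa^{\,2}-s^{2}|\le C\kappa^{2}|s|^{4}$ uniformly on $\mathbb{C}_+\times(0,\infty)$. Inserting (i) and (ii) into the displayed inequality directly produces the claimed bound $\triple{(\mathrm{V},s_\kappa\mathrm{V})}\le C\kappa^{2}|s|/\min\{1,\mathrm{Re}\,s\}\,\|\mathrm{G}\|$.

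\textbf{Where the work lies.} The conceptual steps above are essentially forced once one decides to test with $\overline{s_\kappa\mathrm{V}}$; the only genuinely nontrivial point is (ii), which must be justified over the entire right half-plane rather than only for small $|s\kappa|$. I expect to handle it by a two-regime argument: for $|s\kappa|\le 1$, the Taylor expansion $\delta(e^{-z}) = z-\tfrac{1}{3}z^{3}+O(z^{4})$ gives $|s_\kappa-s|\le C\kappa^{2}|s|^{3}$ and $|s_\kappa+s|\le C|s|$, hence the claim; for $|s\kappa|\ge 1$, the crude bound $|s_\kappa|\le \|\delta\|_{L^{\infty}(\overline{\mathbb{D}})}/\kappa$ gives $|s_\kappa^{\,2}-s^{2}|\le C(|s_\kappa|^{2}+|s|^{2})\le C|s|^{2}\le C\kappa^{2}|s|^{4}$, where the last inequality uses $|s\kappa|\ge 1$ again. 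Property (i) is the classical A-stability of BDF2 encoded in the usual estimate $\mathrm{Re}(\delta(\zeta))\ge c(1-|\zeta|)$ for $|\zeta|\le 1$, together with $|e^{-s\kappa}|\le e^{-\kappa\mathrm{Re}(s)}$; I would cite this from the standard CQ references rather than reprove it. No other step is expected to present a substantive obstacle.
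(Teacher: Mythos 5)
Your proposal is correct and follows essentially the same route as the paper: the paper likewise derives the energy identity from the integration-by-parts formula \eqref{eq:6.7} (testing against the conjugate of $\mathrm V$, which after multiplying by $\overline{s_\kappa}$ and taking real parts is exactly your computation), obtains $(\mathrm{Re}\,s_\kappa)\,\triple{(\mathrm V,s_\kappa\mathrm V)}\le \|\mathrm H\|_{\mathbb R^d}$, and closes with the same BDF2 symbol properties ($|s_\kappa|\le C_1|s|$, $|s_\kappa-s|\le C_2\kappa^2|s|^3$, $\mathrm{Re}\,s_\kappa\ge C_3\min\{1,\mathrm{Re}\,s\}$), which it simply asserts. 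Your two-regime verification of the symbol estimates supplies detail the paper omits, but it is the same argument in substance.
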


\begin{proof}
Consider first a general $z \in \mathbb C_+$, $\mathrm H \in L^2(\mathbb R^d)$ and $\mathrm V \in D_h$ such that $z^2\mathrm V -\mathrm \Delta \mathrm V=\mathrm H$. By \eqref{eq:6.7}, it follows that
\[
z^2 \|\mathrm V\|_{\mathbb R^d}^2+\|\nabla \mathrm V\|_{\mathbb R^d}^2=(\mathrm H,\overline{\mathrm V})_{\mathbb R^d}
\]
and therefore
\begin{equation}\label{eq:6.16}
(\mathrm{Re}\,z)\, \triple{(\mathrm V,z\mathrm V)} \le \|\mathrm H\|_{\mathbb R^d}.
\end{equation}
Taking now $\mathrm H:=(s_\kappa^2-s^2) s^{-3}\mathrm G$, $z=s_\kappa$, and noticing that
\[
|s_\kappa|\le C_1|s|, \qquad |s_\kappa-s|\le C_2 \kappa^2 |s|^3, \qquad \mathrm{Re} s_\kappa \ge C_3 \min\{1,\mathrm{Re}\,s\}, \quad \forall s \in \mathbb C_+, \forall \kappa,
\]
the result follows from \eqref{eq:6.16}.
\end{proof} 

In the next results we will refer to the operator
\[
\partial_\kappa g :=\smallfrac1\kappa (\smallfrac32 g-2 g(\cdot-\kappa)+\smallfrac12 g(\cdot-2\kappa)) \qquad \mathcal L\{\partial_\kappa g\}=s_\kappa \mathrm G(s),
\]
which is the discrete derivative associated to the BDF2 method.

\begin{proposition}\label{prop:6.6}
Let $f:=\partial_\kappa e$ and $f_n:=f(t_n)=\frac1\kappa(\frac32 e_n-2e_{n-1}+\frac12 e_{n-2})$. Then for all $n\ge 1$
\[
\triple{(e_n,f_n)}
 \le \kappa^2\,C\, t_n(1+t_n^2) B_8^{1/2}(\varphi_h,t_n).
\]
\end{proposition}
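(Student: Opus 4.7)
The plan is to combine Lemma \ref{lemma:6.5} with a Parseval argument on a vertical line of the Laplace half-plane: Lemma \ref{lemma:6.5} delivers a pointwise-in-$s$ Laplace-domain bound for the error $\mathrm E(s) = \mathrm U_h(s) - \mathrm U_h^\kappa(s)$, and Plancherel transfers it to a time-domain estimate. By Propositions \ref{prop:6.1} and \ref{prop:6.2}, $\mathrm E(s) \in D_h$ satisfies $s_\kappa^2 \mathrm E - \Delta \mathrm E = (s_\kappa^2 - s^2)\mathrm U_h$. Writing $\mathrm U_h = s^{-3}\,(s^3 \mathrm U_h)$ and noting that, by the vanishing initial conditions of $u_h$, $s^3 \mathrm U_h = \mathcal L\{u_h^{(3)}\}$, Lemma \ref{lemma:6.5} applied with $\mathrm G = s^3 \mathrm U_h$ yields
\[
\triple{\bigl(\mathrm E(s),\, s_\kappa \mathrm E(s)\bigr)} \le C\kappa^2\,\frac{|s|}{\min\{1,\mathrm{Re}\,s\}}\,\bigl\|\mathcal L\{u_h^{(3)}\}(s)\bigr\|_{\mathbb R^d}, \qquad s\in\mathbb C_+.
\]

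Recognising $s_\kappa \mathrm E = \mathcal L\{f\}$ and $s\,\mathcal L\{u_h^{(3)}\}=\mathcal L\{u_h^{(4)}\}$, Parseval's identity on $\mathrm{Re}\,s=\sigma$ converts the pointwise-in-$s$ bound into a weighted $L^2$-in-time estimate of the form
\[
\int_0^\infty e^{-2\sigma t}\,\triple{(e(t),\,f(t))}^2\,\mathrm dt \le C\kappa^4 \sigma^{-2}\!\int_0^\infty e^{-2\sigma t}\,\|u_h^{(4)}(t)\|_{\mathbb R^d}^2\,\mathrm dt,
\]
valid for $\sigma\in(0,1]$. A pointwise value at $t=t_n$ is then extracted from the elementary identity $\|g(t_n)\|^2 = 2\int_0^{t_n}(g,\dot g)\,\mathrm dt$, valid for causal $g$ vanishing at $0$; the resulting $\dot g$ is controlled by repeating the previous step for $\dot e$, which solves the same equation with $u_h$ replaced by $\dot u_h$ and therefore introduces $u_h^{(5)}$. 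Choosing $\sigma\sim 1/(1+t_n)$ makes the exponential weights uniformly bounded on $[0,t_n]$.

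The polynomial factor $t_n(1+t_n^2)$ and the maximal derivative order $8$ both arise from collecting contributions of $\sigma^{-2}\sim(1+t_n)^2$, the $\sqrt{1+C_t^2}$ appearing in Theorem \ref{th:5.1} (applied to $u_h^{(4)}=u_h^{\varphi_h^{(4)}}$ and $u_h^{(5)}=u_h^{\varphi_h^{(5)}}$, legitimate since $\varphi_h\mapsto u_h^{\varphi_h}$ is a convolution), and the pointwise-to-integral upgrade $\max_{[0,t_n]}\|\varphi_h^{(\ell)}\|_{1/2,\Gamma}\le\int_0^{t_n}\|\varphi_h^{(\ell+1)}\|_{1/2,\Gamma}\,\mathrm d\tau$ needed to re-express $L^2$-in-time norms of $u_h^{(4)}, u_h^{(5)}$ as $L^1$-in-time norms of derivatives of $\varphi_h$. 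The main obstacle is precisely this bookkeeping: the pointwise-extraction step of the Parseval passage is what forces the regularity up to $B_8^{1/2}$, in contrast to the $B_6^{1/2}$ that the BDF2 consistency error $\theta_n\sim\kappa^2 u_h^{(4)}$ alone would suggest. An alternative purely time-domain Dahlquist G-stability argument for the two-term system $f_n=\partial_\kappa e_n$, $\partial_\kappa f_n=\Delta e_n+\theta_n$ would require essentially the same accounting.
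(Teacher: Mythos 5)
Your opening move is the same as the paper's: apply Lemma \ref{lemma:6.5} to the error equation \eqref{eq:6.6} with $\mathrm G=s^3\mathrm U_h(s)$ to obtain the pointwise Laplace-domain bound on $\triple{(\mathrm E(s),s_\kappa\mathrm E(s))}$, and finish by converting $u_h$-derivatives into $\varphi_h$-derivatives through Theorem \ref{th:5.1}. The difference is the middle step: the paper passes from the Laplace domain to the time domain by citing the inversion theorem \cite[Theorem 7.1]{DomSaySB}, which directly yields the local-in-time estimate \eqref{eq:6.17}, whereas you attempt to redo that inversion by hand with Plancherel. That is where your argument has a genuine gap.

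The gap is localization of the right-hand side. Plancherel on the line $\mathrm{Re}\,s=\sigma$ gives you
\[
\int_0^\infty e^{-2\sigma t}\,\triple{(e(t),f(t))}^2\,\mathrm dt \le C\kappa^4\sigma^{-2}\int_0^\infty e^{-2\sigma t}\,\|u_h^{(4)}(t)\|_{\mathbb R^d}^2\,\mathrm dt,
\]
and the choice $\sigma\sim 1/(1+t_n)$ indeed lets you restrict the \emph{left-hand side} to $[0,t_n]$, since the weight is bounded below there. But it does nothing to the \emph{right-hand side}: with such a slowly decaying weight, that integral picks up the behaviour of $u_h^{(4)}$ --- hence of $\varphi_h$ --- on arbitrarily large times, while the proposition's bound involves only $B_8^{1/2}(\varphi_h,t_n)$, i.e., data on $[0,t_n]$. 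Concretely, if $\varphi_h$ is made very large on $[t_n,2t_n]$, your unlocalized estimate blows up while the claimed one does not, so as written you prove a strictly weaker, different statement. The missing step is a causality/truncation argument: both $\varphi_h\mapsto u_h$ and $\varphi_h\mapsto u_h^\kappa$ are causal convolution operators, so $e|_{[0,t_n]}$ depends only on $\varphi_h|_{[0,t_n]}$; one must therefore replace $\varphi_h$ by an extension of $\varphi_h|_{[0,t_n]}$ whose $\mathcal W^k_0$-norms on $(0,\infty)$ (up to $k=8$, with constants uniform in $t_n$) are controlled by those of $\varphi_h$ on $[0,t_n]$, and run the Parseval argument for the truncated data. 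This truncation-plus-inversion is precisely what the cited \cite[Theorem 7.1]{DomSaySB} packages, so omitting it is not bookkeeping but the crux of obtaining local-in-time bounds from Laplace-domain ones. Your remaining steps --- differentiating the error equation to handle $\dot e$ via $u_h^{(5)}$, the $L^2$-to-$L^1$ upgrades, and the resulting derivative and time-power count --- are plausible and would land within $\kappa^2\,C\,t_n(1+t_n^2)B_8^{1/2}(\varphi_h,t_n)$ once the localization is supplied.
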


\begin{proof}
By \eqref{eq:6.6} and Lemma \ref{lemma:6.5} it follows that
\[
\triple{(\mathrm E(s),\mathrm F(s))}
 \le C\kappa^2 \frac{|s|}{\min\{1,\mathrm{Re}\,s\}} \| s^3\mathrm U_h(s)\|_{\mathbb R^d} \qquad s\in \mathbb C_+.
\]
Using then \cite[Theorem 7.1]{DomSaySB}, it follows that
\begin{equation}\label{eq:6.17}
\triple{(e(t),f(t))}
 \le C \kappa^2 t \sum_{\ell=3}^6 \int_0^t\|u_h^{(\ell)}(\tau)\|_{\mathbb R^d}\mathrm d\tau \qquad \forall t.
\end{equation}
Since by Theorem \ref{th:5.1}, we can bound
\begin{equation}\label{eq:6.17b}
\int_0^t \|u_h(\tau)\|_{\mathbb R^d}\mathrm d\tau \le C (1+t^2) B_2^{1/2}(\varphi_h,t),
\end{equation}
the result is a direct consequence of \eqref{eq:6.17}.
\end{proof}

\begin{theorem}\label{th:6.7}
For all $n\ge 1$,
\begin{eqnarray}
\label{eq:6.18}
\| e_n\|_{1,\mathbb R^d} & \le & \kappa^2 \,t_n(1+t_n^3) B_8^{1/2}(\varphi_h,t_n),\\
\label{eq:6.19}
\|\varepsilon_n\|_{-1/2,\Gamma} & \le & \kappa^2\, C \, (1+t_n^3) B_9^{1/2}(\varphi_h,t_n).
\end{eqnarray}
\end{theorem}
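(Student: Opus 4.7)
The plan is to deduce Theorem \ref{th:6.7} from Proposition \ref{prop:6.6} by adding two ingredients: an $L^2(\mathbb R^d)$ estimate on $e_n$, which Proposition \ref{prop:6.6} does not give (it only controls $\nabla e_n$ and $f_n$), and a bound on $\|\Delta e_n\|_{\mathbb R^d\setminus\Gamma}$, needed in order to apply the jump--trace inequality \eqref{eq:4.7} to $\varepsilon_n=-\jump{\partial_\nu e_n}$. Combining these two with the estimates already present in Proposition \ref{prop:6.6} will yield \eqref{eq:6.18} and \eqref{eq:6.19}, respectively.

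For the $L^2$ bound on $e_n$, I would pass to the Laplace domain, where $f=\partial_\kappa e$ becomes $\mathrm F(s)=s_\kappa \mathrm E(s)$, so $\mathrm E(s)=s_\kappa^{-1}\mathrm F(s)$. The BDF2 symbol factors as $\delta(\zeta)=\tfrac12(1-\zeta)(3-\zeta)$, and the partial fraction
\[
\frac{1}{\delta(\zeta)}=\frac{1}{1-\zeta}-\frac{1}{3-\zeta}=\sum_{n\ge0}\bigl(1-3^{-n-1}\bigr)\zeta^n
\]
has all coefficients in $[0,1)$. Since $s_\kappa^{-1}=\kappa/\delta(e^{-s\kappa})$, the corresponding time-domain identity reads $e_n=\kappa\sum_{m=0}^n(1-3^{-(n-m)-1})f_m$, so $\|e_n\|_{\mathbb R^d}\le \kappa\sum_{m=1}^n\|f_m\|_{\mathbb R^d}$. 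Plugging in Proposition \ref{prop:6.6} and using $\kappa\sum_{m=1}^n t_m(1+t_m^2)\le C t_n(1+t_n^3)$ provides the $L^2$ estimate needed for \eqref{eq:6.18}; the $\|\nabla e_n\|$ bound of Proposition \ref{prop:6.6} completes the argument.

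For \eqref{eq:6.19}, inverting \eqref{eq:6.6} in time gives $\Delta e_n=(\partial_\kappa f)(t_n)-\theta_n$. The consistency term is controlled by the standard BDF2 estimate $\|\theta_n\|_{\mathbb R^d}\le C\kappa^2\max_{t_{n-2}\le t\le t_n}\|u_h^{(4)}(t)\|_{\mathbb R^d}$, which by Theorem \ref{th:5.1} applied to the data $\varphi_h^{(4)}$ stays inside a multiple of $(1+t_n)B_9^{1/2}(\varphi_h,t_n)$. The delicate term is $(\partial_\kappa f)(t_n)$. Since $\partial_\kappa$ annihilates constants,
\[
(\partial_\kappa f)(t_n)=\frac{1}{\kappa}\Bigl[\tfrac32\int_{t_{n-1}}^{t_n}\dot f(\tau)\,\mathrm d\tau-\tfrac12\int_{t_{n-2}}^{t_{n-1}}\dot f(\tau)\,\mathrm d\tau\Bigr],
\]
and hence $\|(\partial_\kappa f)(t_n)\|_{\mathbb R^d}\le 2\max_{[t_{n-2},t_n]}\|\dot f(t)\|_{\mathbb R^d}$. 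Because $\partial_\kappa$ commutes with $\mathrm d/\mathrm dt$ and because $\varphi_h\mapsto u_h^\kappa$ is a convolution, $\dot f=\partial_\kappa\dot e$ plays, for the shifted problem with data $\dot\varphi_h$, exactly the role played by $f$ in Proposition \ref{prop:6.6}. The continuous-time bound (6.17) inside the proof of that proposition therefore yields $\|\dot f(t)\|_{\mathbb R^d}\le C\kappa^2 t(1+t^2)B_8^{1/2}(\dot\varphi_h,t)\le C\kappa^2 t(1+t^2)B_9^{1/2}(\varphi_h,t)$. Summing, $\|\Delta e_n\|\le C\kappa^2(1+t_n^3)B_9^{1/2}(\varphi_h,t_n)$, and \eqref{eq:4.7} together with the $\|\nabla e_n\|$ bound of Proposition \ref{prop:6.6} gives \eqref{eq:6.19}.

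The main obstacle is precisely the estimate for $(\partial_\kappa f)(t_n)$: a naive triangle inequality on the finite difference would lose a full power of $\kappa$ because of the $1/\kappa$ prefactor in the BDF2 stencil. The key observation is the vanishing of the BDF2 weights on constants, which lets the stencil be rewritten as differences and then as integrals of $\dot f$, so that one $\kappa$ is regained from the length of the integration interval. Coupled with the commutativity $\partial_\kappa\circ\mathrm d/\mathrm dt=\mathrm d/\mathrm dt\circ\partial_\kappa$, this allows Proposition \ref{prop:6.6} to be recycled almost verbatim on the differentiated data, at the modest price of raising the derivative count from $B_8^{1/2}$ to $B_9^{1/2}$. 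The rest is routine bookkeeping of the polynomial-in-$t_n$ factors.
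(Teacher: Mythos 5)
Your proposal is correct and takes essentially the same route as the paper: for \eqref{eq:6.18} the paper also combines Proposition \ref{prop:6.6} with the recurrence stability bound $\|e_n\|_{\mathbb R^d}\le \kappa\sum_{j=1}^n\|f_j\|_{\mathbb R^d}$ (which it merely calls ``a simple stability argument for recurrences'' and you justify explicitly via the partial fraction of $1/\delta$), and for \eqref{eq:6.19} it uses the same identity $\Delta e_n=(\partial_\kappa f)(t_n)-\theta_n$, bounds $(\partial_\kappa f)(t_n)$ by $\max\|\dot f\|$, controls $\dot f$ by applying the continuous-time estimate \eqref{eq:6.17} to the differentiated data, and concludes with the BDF2 consistency bound and the jump--trace inequality \eqref{eq:4.7}. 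The only (harmless) slip is that the stencil of $\partial_\kappa^2$ spans five nodes, so the consistency maximum should be taken over $[t_{n-4},t_n]$ rather than $[t_{n-2},t_n]$.
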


\begin{proof}
The proof is very similar to that of Theorem \ref{th:6.4}. Using a simple stability argument for recurrences, we first show that
\[
\| e_n\|_{\mathbb R^d}\le \kappa \sum_{j=1}^n \| f_j\|_{\mathbb R^d}.
\]
This inequality and Proposition \ref{prop:6.6} prove \eqref{eq:6.18}. We next use that $\Delta e_n=(\partial_\kappa f)(t_n)-\theta_n$ (see \eqref{eq:6.6} and the definition of $f$ in Proposition \ref{prop:6.6}) to bound
\begin{eqnarray*}
\|\Delta e_n\|_{\mathbb R^d\setminus\Gamma} & \le & \smallfrac32 \|\smallfrac1{\kappa}(f_n-f_{n-1})\|_{\mathbb R^d}+\smallfrac12 \|\smallfrac1{\kappa}(f_{n-1}-f_{n-2})\|_{\mathbb R^d}+\| (\partial_\kappa^2 u_h-\ddot u_h) (t_n)\|_{\mathbb R^d}\\
& \le & \smallfrac32 \max_{t_{n-2}\le \tau\le t_n} \| \dot f(\tau)\|_{\mathbb R^d} + C \kappa^2 \max_{t_{n-4}\le \tau\le t_n}\| u_h^{(4)}(\tau)\|_{\mathbb R^d}.
\end{eqnarray*}
Using then \eqref{eq:6.17} and \eqref{eq:6.17b} we obtain a bound
\[
\| \Delta e_n\|_{\mathbb R^d\setminus\Gamma} \le \kappa^2 (1+t_n^2) B_9^{1/2}(\varphi_h,t_n)
\]
and \eqref{eq:6.19} follows from this and Proposition \ref{prop:6.6} using that $\varepsilon_n=-\jump{\partial_\nu e_n}$.
\end{proof}

\subsection{Analysis of the trapezoidal rule discretization}\label{sec:6.4}

\begin{proposition}\label{prop:6.8}
For all $n\ge 1$
\[
\triple{(\smallfrac12(e_{n+1}+e_n), \smallfrac1\kappa (e_{n+1}-e_n))}\le \kappa^2\, C \, t_{n+1} B_2^{1/2}(\varphi_h^{(5)}, t_{n+1}).
\]
\end{proposition}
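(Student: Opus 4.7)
The plan is to adapt the discrete energy estimate technique from the backward-Euler analysis, exploiting the conservation structure of the trapezoidal (Crank–Nicolson) discretization. Because $s_\kappa = \frac{2}{\kappa}\frac{1-\zeta}{1+\zeta}$ for the trapezoidal rule, defining $f_n := (\partial_\kappa e)(t_n)$ gives the recurrence $\tfrac12(f_n+f_{n-1}) = \tfrac{1}{\kappa}(e_n-e_{n-1})$. Applying $\partial_\kappa$ once more to the error equation \eqref{eq:6.6} and translating back to the time domain yields
\begin{equation*}
\Delta\!\left(\tfrac{e_n+e_{n-1}}{2}\right) + \tfrac{\theta_n+\theta_{n-1}}{2} = \tfrac{f_n - f_{n-1}}{\kappa}, \qquad n\ge 1,
\end{equation*}
and all combinations appearing above lie in $D_h$ by \eqref{eq:6.5}.

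Next I would test this identity in $L^2(\mathbb R^d)$ with $\tfrac12(f_n+f_{n-1})=\tfrac{1}{\kappa}(e_n-e_{n-1})\in D_h$ and apply the integration-by-parts formula \eqref{eq:6.7} to the Laplacian term, producing the discrete energy identity
\begin{equation*}
\triple{(e_n,f_n)}^2-\triple{(e_{n-1},f_{n-1})}^2 = \kappa\bigl(\theta_n+\theta_{n-1},\, \tfrac12(f_n+f_{n-1})\bigr)_{\mathbb R^d}.
\end{equation*}
This is exactly the energy conservation inherent to the trapezoidal discretization of the wave equation, perturbed only by the consistency error. Writing $E_n := \triple{(e_n,f_n)}$ and $E_\star := \max_{j\le n+1}E_j$, Cauchy--Schwarz together with $\|\tfrac12(f_n+f_{n-1})\|\le E_\star$ and telescoping from $E_0=0$ give $E_\star^2 \le \kappa E_\star\sum_{j=1}^{n+1}\|\theta_j+\theta_{j-1}\|$, hence $E_\star \le 2\kappa\sum_{j=0}^{n+1}\|\theta_j\|$. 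A convexity bound then controls the target quantity: since $\hat e_n:=\tfrac12(e_{n+1}+e_n)$ and $\hat f_n:=\tfrac1\kappa(e_{n+1}-e_n)=\tfrac12(f_{n+1}+f_n)$,
\begin{equation*}
\triple{(\hat e_n,\hat f_n)}^2 \le \tfrac12\bigl(\triple{(e_{n+1},f_{n+1})}^2+\triple{(e_n,f_n)}^2\bigr)\le E_\star^2.
\end{equation*}

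Finally, the consistency term must be bounded. The standard expansion $s_\kappa = s-\tfrac{\kappa^2}{12}s^3+O(\kappa^4 s^5)$ gives $|s_\kappa^2-s^2|\le C\kappa^2|s|^4$ on $\mathbb C_+$, and a Peano-kernel representation of the trapezoidal CQ then yields $\|\theta_j\|_{\mathbb R^d}\le C\kappa^2\sup_{0\le\tau\le t_j}\|u_h^{(4)}(\tau)\|_{\mathbb R^d}$. Since $u_h^{(4)}(0)=0$ by causality (guaranteed by $\varphi_h\in\mathcal W_0^k$ for $k$ large enough), $\sup_\tau\|u_h^{(4)}(\tau)\|\le\int_0^{t_j}\|u_h^{(5)}(\sigma)\|\,d\sigma$. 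Because $\varphi\mapsto u_h^\varphi$ is a convolution operator and hence commutes with time differentiation, Theorem \ref{th:5.1} applied to $\varphi_h^{(5)}$ controls $\int_0^t\|u_h^{(5)}(\sigma)\|\,d\sigma$ by $C\,B_2^{1/2}(\varphi_h^{(5)},t)$ (absorbing the $\sqrt{1+C_t^2}$ and pointwise terms into the constant). Combining,
\begin{equation*}
\kappa\sum_{j=0}^{n+1}\|\theta_j\|_{\mathbb R^d} \le C\kappa^2 t_{n+1}\int_0^{t_{n+1}}\|u_h^{(5)}(\sigma)\|_{1,\mathbb R^d\setminus\Gamma}\,d\sigma \le C\kappa^2 t_{n+1} B_2^{1/2}(\varphi_h^{(5)},t_{n+1}),
\end{equation*}
which together with the energy bound closes the proof.

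The main obstacle -- and the reason the trapezoidal rule must be treated separately from backward Euler and BDF2 -- is that $\mathrm{Re}(s_\kappa)$ has no uniform positive lower bound on $\mathbb C_+$ (the bilinear transform sends $i\mathbb R$ to $i\mathbb R$), so Lemma \ref{lemma:6.5} is unavailable. The time-domain conservative energy identity is therefore genuinely necessary. The derivative bookkeeping -- needing a pointwise bound on $u_h^{(4)}$, then converting $\sup$ to integral of $u_h^{(5)}$ via causality, then paying two derivatives of $\varphi_h$ per derivative of $u_h$ through Theorem \ref{th:5.1} -- is the main technical cost and accounts for the appearance of $B_2^{1/2}(\varphi_h^{(5)},\cdot)$ (and thus derivatives of order $5$ and $7$) in the statement.
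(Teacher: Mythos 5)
Your energy machinery is correct and is, up to reorganization, the paper's own: the identity $\triple{(e_n,f_n)}^2-\triple{(e_{n-1},f_{n-1})}^2=\kappa\,\big(\theta_n+\theta_{n-1},\smallfrac{1}{2}(f_n+f_{n-1})\big)_{\mathbb R^d}$ is a first-order-system version of the telescoped identity \eqref{eq:6.23}, your maximizing index $E_\star$ is the paper's $n^\star$ device, and the convexity step correctly relates $\triple{(e_n,f_n)}$ to the averaged quantity in the statement. The genuine gap is the consistency estimate, and it sits exactly at the point that makes the trapezoidal rule hard. The claim $|s_\kappa^2-s^2|\le C\kappa^2|s|^4$ on $\mathbb C_+$ is false: $s_\kappa=\smallfrac{2}{\kappa}\frac{1-e^{-s\kappa}}{1+e^{-s\kappa}}$ blows up as $s\to\imath\pi/\kappa$ inside $\overline{\mathbb C}_+$, while $\kappa^2|s|^4$ stays bounded there; the expansion $s_\kappa=s-\frac{\kappa^2}{12}s^3+O(\kappa^4s^5)$ is valid only for $|s|\kappa$ small. (Your closing paragraph identifies precisely this pathology of the bilinear transform, but does not notice that it invalidates your own estimate.) Correspondingly, there is no Peano-kernel proof of $\|\theta_j\|_{\mathbb R^d}\le C\kappa^2\sup_{\tau\le t_j}\|u_h^{(4)}(\tau)\|_{\mathbb R^d}$: in the time domain $\theta=(\partial_\kappa^2-\partial_t^2)u_h$, and the weights of $\partial_\kappa^2$, generated by $\kappa^{-2}\delta(\zeta)^2=\frac{4}{\kappa^2}\big(1+\sum_{m\ge1}4m(-1)^m\zeta^m\big)$, alternate in sign and grow linearly, so they are not absolutely summable (uniformly in $\kappa$) and Taylor/Peano arguments do not apply. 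Indeed the asserted bound fails: for $u_h$ equal to a smooth causal envelope times $\cos(\pi t/\kappa)$, the samples $u_h(t_n)$ alternate in sign, every term of the convolution reinforces, and $\|\theta_n\|$ grows like $t_n^2\kappa^{-4}$ while $\kappa^2\sup\|u_h^{(4)}\|\sim\kappa^{-2}$. Any true pointwise bound on $\theta_j$ must exploit cancellation in alternating sums, at the cost of more derivatives and growth in $t_n$; it is the crux of the matter, not a quotable standard fact.

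The paper's proof never bounds $\theta_j$ pointwise. It multiplies the error equation \eqref{eq:6.6} by $\smallfrac{1}{4}(1+e^{-s\kappa})^2$ \emph{before} returning to the time domain, producing \eqref{eq:6.21}: the driving term becomes $\chi_n=\smallfrac{1}{4}(\theta_{n+1}+2\theta_n+\theta_{n-1})$, whose symbol $\smallfrac{1}{\kappa^2}(1-\zeta)^2-\smallfrac{1}{4}(1+\zeta)^2s^2$ is a \emph{polynomial} in $\zeta=e^{-s\kappa}$. Hence $\chi_n$ is a local three-point combination of values of $u_h$ and $\ddot u_h$, and plain Taylor expansion gives \eqref{eq:6.24} with a constant uniform in $n$, $\kappa$ and $h$. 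Your argument is repaired by doing the same: apply the averaging $a_n\mapsto\smallfrac{1}{2}(a_n+a_{n-1})$ once more to your second equation; using $\smallfrac{1}{2}(f_n+f_{n-1})=\smallfrac{1}{\kappa}(e_n-e_{n-1})$, the left side becomes $\smallfrac{1}{\kappa^2}(e_n-2e_{n-1}+e_{n-2})$, the right side becomes $\smallfrac{1}{4}\Delta(e_n+2e_{n-1}+e_{n-2})+\chi_{n-1}$, and testing with $\smallfrac{1}{\kappa}(e_n-e_{n-2})$ makes the target energy $\triple{(\smallfrac{1}{2}(e_n+e_{n-1}),\smallfrac{1}{\kappa}(e_n-e_{n-1}))}$ telescope directly, so your convexity step is no longer needed. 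After that, your remaining steps --- the maximizing-index argument, $\sup_{\tau\le t}\|u_h^{(4)}(\tau)\|\le\int_0^t\|u_h^{(5)}(\sigma)\|\,\mathrm d\sigma$ by causality, and Theorem \ref{th:5.1} via \eqref{eq:6.17b} applied to $\varphi_h^{(5)}$ --- are exactly the paper's and close the proof. (One caveat you share with the paper: \eqref{eq:6.17b} carries a factor $(1+t^2)$, so this route strictly produces an extra power of $t_{n+1}$ beyond what Proposition \ref{prop:6.8} states.)
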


\begin{proof}
In the case of the trapezoidal rule, the error equation \eqref{eq:6.6} can be written as
\[
\smallfrac1{\kappa^2} (1-e^{-s\kappa})^2 \mathrm E(s)=\smallfrac12(1+e^{-s\kappa})^2 \Delta \mathrm E(s)+\smallfrac12{\kappa^2} (1- e^{-s\kappa})^2 \mathrm U_h(s)-\smallfrac12 (1+e^{-s\kappa})^2 s^2\mathrm U_h(s).
\]
In the time domain, this gives
\begin{equation}\label{eq:6.21}
\smallfrac1{\kappa^2} (e_{n+1}-2e_n+e_{n-1})=\smallfrac14\Delta (e_{n+1}+2e_n+e_{n-1}) +\chi_n,
\end{equation}
where
\[
\chi_n:=\smallfrac1{\kappa^2} \big(u_h(t_{n+1})-2u_h(t_n)+u_h(t_{n-1})\big)-\smallfrac12 (\ddot u_h(t_{n+1})+2\ddot u_h(t_n)+\ddot u_h(t_{n-1})).
\]
Using the integration by parts formula \eqref{eq:6.7} we obtain
\begin{eqnarray}\label{eq:6.22}
& &\hspace{-1cm} \Big( \smallfrac1\kappa(e_{n+1}-e_n)-\smallfrac1\kappa(e_n-e_{n-1}), v\Big)_{\mathbb R^d}\\
& & +\smallfrac\kappa2 \Big( \nabla (\smallfrac12(e_{n+1}+e_n)+\nabla (\smallfrac12 (e_n+e_{n-1}),\nabla v\Big)_{\mathbb R^d}=\kappa (\chi_n,v)_{\mathbb R^d} \quad \forall v\in D_h.
\nonumber
\end{eqnarray}
Testing \eqref{eq:6.22} with 
\[
v:=\smallfrac1\kappa(e_{n+1}-e_{n-1})=\smallfrac1\kappa(e_{n+1}-e_n)+\smallfrac1\kappa(e_n-e_{n-1})=\smallfrac2\kappa \Big( \smallfrac12 (e_{n+1}+e_n)-\smallfrac12(e_n+e_{n-1})\Big),
\]
it follows that
\begin{eqnarray}\nonumber
\triple{(\smallfrac12(e_{n+1}+e_n), \smallfrac1\kappa (e_{n+1}-e_n))}^2 &=&
\triple{(\smallfrac12(e_{n}+e_{n-1}), \smallfrac1\kappa (e_{n}-e_{n-1}))}^2\\ 
\nonumber
& & +\kappa\big(\chi_n,\smallfrac1\kappa(e_{n+1}-e_n)+\smallfrac1\kappa(e_n-e_{n-1})\big)_{\mathbb R^d}\\
&=& \kappa \sum_{j=0}^n (\chi_j,\smallfrac1\kappa(e_{j+1}-e_j)+\smallfrac1\kappa(e_j-e_{j-1}))_{\mathbb R^d}.
\label{eq:6.23}
\end{eqnarray}
Consider now the mesh-grid with nodes in the midpoints $t_{j+\frac12}:=(j+\frac12)\kappa$, the piecewise constant function $\widetilde\chi$ such that $\widetilde\chi(t)=\chi_j$ in $(t_{j-\frac12},t_{j+\frac12})$, and the continuous piecewise linear functions $\widetilde f$ and $\widetilde e$ such that
\[
\widetilde f(t_{j+\frac12})=\smallfrac1\kappa(f_{j+1}-f_j),\qquad \widetilde e(t_{j+\frac12})=\smallfrac12(e_{j+1}+e_j).
\]
We can then write \eqref{eq:6.23} as
\[
\triple{ (\widetilde e(t_{n+\frac12}),\widetilde f(t_{n+\frac12}))}^2 = 2\int_0^{t_{n+\frac12}} (\widetilde\chi(\tau),\widetilde f(\tau))_{\mathbb R_d}\mathrm d \tau.
\]
Given $n$, we choose $n^\star\le n$ such that $\triple{ (\widetilde e(t_{n^\star+\frac12}),\widetilde f(t_{n^\star+\frac12}))}$ is maximized. Then
\[
\triple{ (\widetilde e(t_{n^\star+\frac12}),\widetilde f(t_{n^\star+\frac12}))}^2 \le 2 \triple{ (\widetilde e(t_{n^\star+\frac12}),\widetilde f(t_{n^\star+\frac12}))}\int_0^{t_{n^\star+\frac12}} \|\widetilde\chi(\tau)\|_{\mathbb R^d}\mathrm d\tau
\]
and therefore (after comparing with the maximum at $t_{n^\star+\frac12}$ and overestimating the integral in the right-hand side)
\[
\triple{ (\widetilde e(t_{n+\frac12}),\widetilde f(t_{n+\frac12}))} \le 2 \int_0^{t_{n+\frac12}} \|\widetilde\chi(\tau)\|_{\mathbb R^d}\mathrm d\tau\le 2 t_{n+1} \max_{j\le n} \|\chi_j\|_{\mathbb R^d} \qquad \forall n.
\]
Since
\begin{equation}\label{eq:6.24}
\|\chi_n\|_{\mathbb R^d} \le C \kappa^2\max_{t_{n-1}\le \tau\le t_{n+1}}\| u_h^{(4)}(\tau)\|_{\mathbb R^d}, 
\end{equation}
the result follows by \eqref{eq:6.17b}.
\end{proof}

\begin{theorem}\label{th:6.9}
For all $n\ge 1$,
\begin{eqnarray}\label{eq:6.25}
\| e_n\|_{\mathbb R^d} &\le & \kappa^2 C t_n^2 B_7^{1/2}(\varphi_h,t_n),\\
\|\smallfrac14(\varepsilon_{n+1}+2\varepsilon_n+\varepsilon_{n-1})\|_{-1/2,\Gamma} & \le & \kappa^2 C (1+t_{n+1}^2) B_9^{1/2}(\varphi_h,t_{n+1}).
\end{eqnarray}
\end{theorem}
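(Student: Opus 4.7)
My plan is to mimic the structure of the proofs of Theorems \ref{th:6.4} and \ref{th:6.7}: first obtain the $L^2$-in-space bound on $e_n$ by telescoping the first-difference estimate already provided by Proposition \ref{prop:6.8}, and then bound the averaged jump $\frac14(\varepsilon_{n+1}+2\varepsilon_n+\varepsilon_{n-1})$ by applying the trace inequality \eqref{eq:4.7} to $\bar e_n := \frac14(e_{n+1}+2e_n+e_{n-1})$, which lies in $D_h$ by \eqref{eq:6.5}. Because of finite speed of propagation, the $B_0$-norms in \eqref{eq:4.7} can be replaced by the corresponding norms on $\mathbb R^d$.

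For the first estimate \eqref{eq:6.25}, since $e_0=0$ I would telescope
\[
e_n=\kappa\sum_{j=0}^{n-1}\smallfrac1\kappa(e_{j+1}-e_j),
\]
take $\|\cdot\|_{\mathbb R^d}$, apply Proposition \ref{prop:6.8} termwise to bound each summand by $\kappa^2 C\,t_{j+1}\,B_2^{1/2}(\varphi_h^{(5)},t_{j+1})$, and use $\kappa\sum_{j=1}^n t_j\le t_n^2$ together with the trivial inclusion $B_2^{1/2}(\varphi_h^{(5)},t_n)\le B_7^{1/2}(\varphi_h,t_n)$.

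For the second estimate, the gradient contribution in \eqref{eq:4.7} is the easy part: writing $\bar e_n=\frac12(\bar e^*_n+\bar e^*_{n-1})$ with $\bar e^*_k:=\frac12(e_{k+1}+e_k)$, two applications of Proposition \ref{prop:6.8} give $\|\nabla\bar e_n\|_{\mathbb R^d}\le \kappa^2 C t_{n+1}B_9^{1/2}(\varphi_h,t_{n+1})$. For the Laplacian I would use the error identity \eqref{eq:6.21}, which reads $\Delta\bar e_n=D_\kappa^2 e_n-\chi_n$ with $D_\kappa^2 e_n:=\frac1{\kappa^2}(e_{n+1}-2e_n+e_{n-1})$. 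The consistency term $\chi_n$ is handled by \eqref{eq:6.24} followed by Theorem \ref{th:5.1} applied to data $\varphi_h^{(4)}$, yielding $\|\chi_n\|_{\mathbb R^d}\le C\kappa^2(1+t_{n+1})B_9^{1/2}(\varphi_h,t_{n+1})$ after overestimating the trace norms of $\varphi_h^{(4)}$ by integrals of $\varphi_h^{(5)}$.

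The main obstacle, and the heart of the argument, is to bound $\|D_\kappa^2 e_n\|_{\mathbb R^d}$ with the correct $\kappa^2$ rate, since the naive triangle inequality from Proposition \ref{prop:6.8} only yields $O(\kappa)$. My plan is to exploit the linearity and translation invariance of the CQ scheme: replacing the data $\varphi_h$ by the shifted backward difference $\widetilde\varphi_h(t):=\smallfrac1\kappa(\varphi_h(t)-\varphi_h(t-\kappa))$ produces, by commutation of differentiation with convolution on the semidiscrete side and by the translation invariance of the CQ discrete convolution on the discrete side, a new error sequence $\widetilde e_n=\smallfrac1\kappa(e_n-e_{n-1})$. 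Proposition \ref{prop:6.8} applied to this shifted problem then gives
\[
\| D_\kappa^2 e_n\|_{\mathbb R^d}=\left\|\smallfrac1\kappa(\widetilde e_{n+1}-\widetilde e_n)\right\|_{\mathbb R^d}\le \kappa^2 C t_{n+1}\, B_2^{1/2}(\widetilde\varphi_h^{(5)},t_{n+1}),
\]
and a Fubini/fundamental-theorem estimate bounds $\int_0^t\|\widetilde\varphi_h^{(\ell)}(\tau)\|_{1/2,\Gamma}\mathrm d\tau$ by $\int_0^t\|\varphi_h^{(\ell+1)}(\tau)\|_{1/2,\Gamma}\mathrm d\tau$, so $B_2^{1/2}(\widetilde\varphi_h^{(5)},t_{n+1})\le B_9^{1/2}(\varphi_h,t_{n+1})$. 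Combining the gradient estimate, the bound on $D_\kappa^2 e_n$, and the bound on $\chi_n$ with \eqref{eq:4.7} yields \eqref{eq:6.26}, after absorbing $(1+t_{n+1})$ into the looser $(1+t_{n+1}^2)$ of the statement.
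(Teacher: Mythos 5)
Your proposal is correct, and in its overall skeleton it matches the paper's proof: the first estimate \eqref{eq:6.25} is obtained exactly as in the paper, by telescoping $e_n=\kappa\sum_{j=0}^{n-1}\smallfrac1\kappa(e_{j+1}-e_j)$ and applying Proposition \ref{prop:6.8} termwise, and the second estimate is likewise built from the error identity \eqref{eq:6.21}, the consistency bound \eqref{eq:6.24} combined with Theorem \ref{th:5.1}, the gradient control from Proposition \ref{prop:6.8}, and the normal-derivative inequality \eqref{eq:4.7} applied to $\frac14(e_{n+1}+2e_n+e_{n-1})\in D_h$. Where you genuinely diverge is in the one nontrivial step: getting the $O(\kappa^2)$ rate for the second difference quotient $D_\kappa^2e_n=\frac1{\kappa^2}(e_{n+1}-2e_n+e_{n-1})$. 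The paper bounds $\|D_\kappa^2 e_n\|_{\mathbb R^d}$ by $\max_{t_{n-1}\le\tau\le t_{n+1}}\|\ddot e(\tau)\|_{\mathbb R^d}$, which requires viewing the CQ error as a function of continuous time, and then estimates $\ddot e(\tau)$ by applying the already-proven bound \eqref{eq:6.25} to the data $\ddot\varphi_h$ on a re-gridded mesh of step $\kappa$ containing the maximizing point --- the same re-gridding device used in the proofs of Theorems \ref{th:6.4} and \ref{th:6.7}. You instead stay entirely on the original grid: by linearity and exact shift-invariance of both the semidiscrete convolution and the CQ discrete convolution, the differenced data $\widetilde\varphi_h=\smallfrac1\kappa(\varphi_h-\varphi_h(\cdot-\kappa))$ produces the error sequence $\widetilde e_n=\smallfrac1\kappa(e_n-e_{n-1})$, so $D_\kappa^2 e_n=\smallfrac1\kappa(\widetilde e_{n+1}-\widetilde e_n)$ falls under Proposition \ref{prop:6.8} applied to $\widetilde\varphi_h$, and your Fubini estimate $\int_0^t\|\widetilde\varphi_h^{(\ell)}(\tau)\|_{1/2,\Gamma}\mathrm d\tau\le\int_0^t\|\varphi_h^{(\ell+1)}(\tau)\|_{1/2,\Gamma}\mathrm d\tau$ lands the result inside $B_9^{1/2}(\varphi_h,t_{n+1})$. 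Both arguments are valid and deliver the stated bound; yours is purely discrete (no continuous-time interpretation of the fully discrete solution, no re-gridding) and even gives slightly milder time growth ($t_{n+1}$ instead of $t_{n+1}^2$) for that term, while the paper's version recycles \eqref{eq:6.25} and keeps the proof stylistically uniform with the backward Euler and BDF2 cases. One cosmetic slip: the second inequality of the theorem carries no label in the source, so the equation number you cite for it at the end of your argument does not exist; otherwise the write-up is sound.
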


\begin{proof}
Since
\[
e_n=\kappa \sum_{j=0}^{n-1} \frac1\kappa(e_{j+1}-e_j),
\]
the first bound follows from Proposition \ref{prop:6.8}. By \eqref{eq:6.21} we can bound
\begin{equation}\label{eq:6.27}
\|\smallfrac14\Delta (e_{n+1}+2e_n+e_{n-1})\|_{\mathbb R^d\setminus\Gamma} \le C  \max_{t_{n-1}\le \tau\le t_{n+1}} \|\ddot e(\tau)\|_{\mathbb R^d} +\|\chi_n\|_{\mathbb R^d}
\end{equation}
The first term in the right-hand-side of \eqref{eq:6.27} can be bounded using \eqref{eq:6.25} applied to $\ddot\varphi_h$ using a time-grid with time-step $\kappa$ that includes the point where the maximum is attained. The second term of \eqref{eq:6.27} is bounded using \eqref{eq:6.24}. The result follows from the fact that $\varepsilon_n=-\jump{\partial_\nu e_n}$.
\end{proof}

\section{Final comments}

In this paper we have given a full analysis of the discretization with Galerkin is space and three particular instances of Convolution Quadrature in time of a direct formulation for the exterior Dirichlet problem for the wave equation. The full error estimates are the result of Theorem \ref{th:3.1} for the semidiscretization in space process and Theorems \ref{th:6.4} (backward Euler), \ref{th:6.7} (BDF2), and \ref{th:6.9} (trapezoidal rule) for time discretization.

An indirect formulation, i.e., 
\[
\mathcal V * \xi = \varphi \qquad u=\mathcal S * \xi
\]
follows from very similar arguments. The Galerkin projection is the same and therefore, the analysis of the semidiscrete in psace system is a particular case of the results in this paper. The Galerkin solver (see Section \ref{sec:5}) is however slightly different. While its analysis is not needed for the semidiscretization in space, it is needed for the time discretization. This analysis is likely to be extremely similar to the one given here. In terms of its Laplace transform, the semidiscrete problem is
\begin{alignat*}{4}
s^2 \mathrm U_h(s)-\Delta \mathrm U_h(s) = 0 & \qquad & & \mbox{in $\mathbb R^d\setminus\Gamma$}\\
\jump{\gamma \mathrm U_h(s)} =0, & & & \\
\gamma \mathrm U_h(s)-\Phi(s) \in X_h^\circ, & & & \\
\jump{\partial_\nu \mathrm U_h(s)}\in X_h. & & & 
\end{alignat*}
with $\Xi_h(s)=\jump{\partial_\nu \mathrm U_h(s)}$. This is a very similar problem (same kind of transmission conditions) to problem \eqref{eq:6.BB} (see also Proposition \ref{prop:6.1}). In particular, the error equations to compare semidiscrete and fully discrete solutions \eqref{eq:6.5}-\eqref{eq:6.6} are the same as in the case of the direct formulation and all the arguments of Section \ref{sec:6} hold, contingent to having proved the estimates of Theorem \ref{th:5.1} adapted to the new kind of Galerkin solver. 

All the arguments that have been used in this paper can be easily extended to the case of the single layer potential for the elastic wave equation in any dimension.

Much of the analysis of Sections \ref{sec:4}-\ref{sec:6} can be done using estimates in the Laplace domain. That gives a more streamlined way of proving estimates, although they come with either worse constants (for growth in time) or with higher continuity requirements: see \cite[Section 7]{DomSaySB} for a comparison of Laplace domain and time domain techniques applied to estimating layer potentials and integral operators. The analysis of semidiscretization in space using the Laplace domain can be adapted from the techniques developed in \cite{LalSay09}. Analysis of convolution quadrature can then be carried out using the very general results of Lubich \cite{Lub94} applied to the semidiscrete operators. It has to be noted, though, that the analysis in \cite{Lub94} does not cover the case of the trapezoidal rule (the reference \cite{Ban10} circunvents this difficulty nevertheless), while the relatively traditional time-domain analysis of Section \ref{sec:6.4} --based on understading the semidiscrete equations as a transmission problem and, in particular, on the integration by parts formula \eqref{eq:6.7}--, is applicable. Similarly, the use of multistage convolution quadrature  \cite{BanLub11}, \cite{BanLubMel11} and variable-step convolution quadrature  \cite{LopSauSB} can be applied using estimates in the Laplace domain and it remains to be seen whether a time-domain analysis is practicable and produces different or improved results.

\bibliographystyle{abbrv}
\bibliography{RefsTDIE}

\begin{thebibliography}{10}

\bibitem{BamHaD86a}
A.~Bamberger and T.~H. Duong.
\newblock Formulation variationnelle espace-temps pour le calcul par potentiel
  retard\'e de la diffraction d'une onde acoustique. {I}.
\newblock {\em Math. Methods Appl. Sci.}, 8(3):405--435, 1986.

\bibitem{BamHaD86b}
A.~Bamberger and T.~H. Duong.
\newblock Formulation variationnelle pour le calcul de la diffraction d'une
  onde acoustique par une surface rigide.
\newblock {\em Math. Methods Appl. Sci.}, 8(4):598--608, 1986.

\bibitem{Ban10}
L.~Banjai.
\newblock Multistep and multistage convolution quadrature for the wave
  equation: algorithms and experiments.
\newblock {\em SIAM J. Sci. Comput.}, 32(5):2964--2994, 2010.

\bibitem{BanLub11}
L.~Banjai and C.~Lubich.
\newblock An error analysis of {R}unge-{K}utta convolution quadrature.
\newblock {\em BIT}, 51(3):83--496, 2011.

\bibitem{BanLubMel11}
L.~Banjai, C.~Lubich, and J.~M. Melenk.
\newblock Runge-{K}utta convolution quadrature for operators arising in wave
  propagation.
\newblock {\em Numer. Math.}, 119(1):1--20, 2011.

\bibitem{BanSauSB}
L.~Banjai and S.~Sauter.
\newblock Frequency explicit regularity estimates for the electric field
  integral operator.
\newblock Preprint I-Math, Universit\"at Z\"urich, 05-2012.

\bibitem{BanSchTA}
L.~Banjai and M.~Schanz.
\newblock Wave propagation problems treated with convolution quadrature and
  {BEM}.
\newblock In Langer, M.~Schanz, O.~Steinbach, and W.~Wendland, editors, {\em
  Fast Boundary Element Methods in Engineering and Industrial Applications},
  page 145–187.

\bibitem{DomSaySB}
V.~Dom{\'\i}nguez and F.-J. Sayas.
\newblock Some properties of layer potentials and boundary integral operators
  for the wave equation.
\newblock To appear in J. Integral Equation Appl.

\bibitem{LalSay09b}
A.~R. Laliena and F.-J. Sayas.
\newblock A distributional version of {K}irchhoff's formula.
\newblock {\em J. Math. Anal. Appl.}, 359(1):197--208, 2009.

\bibitem{LalSay09}
A.~R. Laliena and F.-J. Sayas.
\newblock Theoretical aspects of the application of convolution quadrature to
  scattering of acoustic waves.
\newblock {\em Numer. Math.}, 112(4):637--678, 2009.

\bibitem{LopSauSB}
M.~L\'opez~Fern\'andez and S.~Sauter.
\newblock A generalized convolution quadrature with variable time stepping.
\newblock Preprint I-Math, Universit\"at Z\"urich, 17-2011.

\bibitem{Lub94}
C.~Lubich.
\newblock On the multistep time discretization of linear initial-boundary value
  problems and their boundary integral equations.
\newblock {\em Numer. Math.}, 67(3):365--389, 1994.

\bibitem{McL00}
W.~McLean.
\newblock {\em Strongly elliptic systems and boundary integral equations}.
\newblock Cambridge University Press, Cambridge, 2000.

\bibitem{Paz83}
A.~Pazy.
\newblock {\em Semigroups of linear operators and applications to partial
  differential equations}, volume~44 of {\em Applied Mathematical Sciences}.
\newblock Springer-Verlag, New York, 1983.

\bibitem{Ryn99}
B.~P. Rynne.
\newblock The well-posedness of the electric field integral equation for
  transient scattering from a perfectly conducting body.
\newblock {\em Math. Methods Appl. Sci.}, 22(7):619--631, 1999.

\bibitem{SaySB}
F.-J. Sayas.
\newblock Energy estimates for {G}alerkin semidiscretizations of time domain
  boundary integral equations.
\newblock To appear in Numer. Math.

\bibitem{Sch01}
M.~Schanz.
\newblock {\em Wave Propagation in Viscoelastic and Poroelastic Continua: A
  Boundary Element Approach (Lecture Notes in Applied and Computational
  Mechanics)}.
\newblock Springer, 2001.

\end{thebibliography}

\end{document}